\newcounter{sarrow}
\tikzset{vertex/.style={circle,fill=black,inner sep=1pt,outer sep=2pt},
         tinyvertex/.style={font=\scriptsize,minimum size=6pt},
         smallvertex/.style={inner sep=1pt, font=\small},
         >=stealth',
         leadsto/.style={-angle 90,decorate,decoration=snake,very thick},
         cut/.style={decorate,decoration=saw,very thick}}
\tikzset{
    partial ellipse/.style args={#1:#2:#3}{
        insert path={+ (#1:#3) arc (#1:#2:#3)}
    }
}
\newcolumntype{E}{>{\hsize=0.5cm \centering\arraybackslash}X}%
\newcolumntype{C}[1]{>{\hsize=#1\hsize \centering\arraybackslash}X}%
\newcommand{\harxiv}[1]{ \href{http://arxiv.org/abs/#1}{\texttt{arXiv:#1}}}
\newcommand{\hyref}[2]{ \hyperref[#2]{#1~\ref*{#2}} }
\newcommand{\coloneqq}{\mathrel{\mathop:}=}
\newcommand{\Canakci}{\c{C}anak\c{c}\i}
\newcommand{\Ilke}{\.{I}lke }
\newcommand{\I}{\.{I}} 
\theoremstyle{plain}
\newtheorem{theorem}{Theorem}[section]
\newtheorem{lemma}[theorem]{Lemma}
\newtheorem{corollary}[theorem]{Corollary}
\newtheorem{proposition}[theorem]{Proposition}
\newtheorem{introtheorem}{Theorem}
\theoremstyle{definition}
\newtheorem{remark}[theorem]{Remark}
\newtheorem{example}[theorem]{Example}
\newtheorem{definition}[theorem]{Definition}
\newtheorem{definitions}[theorem]{Definitions}
\newtheorem{setup}[theorem]{Setup}
\newtheorem*{introremark}{Remark}
\newcommand{\sD}{\mathsf{D}} 
\newcommand{\sK}{\mathsf{K}} 
\newcommand{\ba}{\bar{a}}
\newcommand{\bb}{\bar{b}}
\newcommand{\bc}{\bar{c}}
\newcommand{\bg}{\bar{g}}
\newcommand{\bh}{\bar{h}}
\newcommand{\bk}{\bar{k}}
\newcommand{\bm}{\bar{m}}
\newcommand{\bn}{\bar{n}}
\newcommand{\Db}{\sD^b}
\newcommand{\KminusL}{\sK^{b,-}(\proj \Lambda)}
\newcommand{\kk}{{\mathbf{k}}}
\renewcommand{\emptyset}{\varnothing}
\DeclareMathOperator{\im}{\mathrm{im}}
\DeclareMathOperator{\Hom}{\mathrm{Hom}}
\DeclareMathOperator{\Ext}{\mathrm{Ext}}
\newcommand{\proj}[1]{\mathrm{proj}(#1)}
\newcommand{\rad}{{\rm rad}} 
\newcommand{\iQ}{\overline{Q}}
\newcommand{\inverse}{\mathrm{inv}}
\newcommand{\direct}{\mathrm{dir}}
\newcommand{\length}{\mathrm{length}}
\newcommand{\B}{B^\bullet}
\renewcommand{\P}{P^\bullet}
\newcommand{\Q}{Q^\bullet}
\newcommand{\E}{E^\bullet}
\newcommand{\f}{f^\bullet}
\newcommand{\g}{g^\bullet}
\newcommand{\h}{h^\bullet}
\newcommand{\too}{\longrightarrow}
\newcommand{\rightlabel}[1]{\stackrel{#1}{\longrightarrow}}
\newcommand{\wiggle}{\rightsquigarrow}
\newcommand{\onto}{\twoheadrightarrow}
\newcommand{\xydot}{{\bullet}}
\newcommand{\arr}{\ar@{-}[r]}
\newenvironment{pmat}{\left[ \begin{smallmatrix}}{\end{smallmatrix} \right]}
\renewcommand{\phi}{\varphi}
\renewcommand{\epsilon}{\varepsilon}
\begin{document}

\title[Extensions]{On extensions for gentle algebras}

\author{\Ilke \Canakci}
\address{Department of Mathematics, VU Amsterdam, Amsterdam 1081 HV, The Netherlands.}
\email{i.canakci@vu.nl}

\author{David Pauksztello} 
\address{Department of Mathematics and Statistics, Lancaster University, Lancaster, LA1 4YF, United Kingdom}
\email{d.pauksztello@lancaster.ac.uk}

\author{Sibylle Schroll}
\address{Department of Mathematics, University of Leicester, University Road, Leicester, LE1 7RH, United Kingdom.}
\email{schroll@leicester.ac.uk}

\keywords{ gentle algebra, extensions, bounded derived category, homotopy string and bands, string combinatorics
}

\subjclass[2010]{ 16G10, 16E35, 18E30, 05E10}

\begin{abstract}
We give a complete description of a basis of the extension spaces between indecomposable string and quasi-simple band modules in the module category of a gentle algebra.
\end{abstract}

\maketitle

\section*{Introduction}

The representation theory of finite-dimensional algebras plays an important role in many different areas of mathematics, such as in Lie theory, in number theory in connection with the  Langlands program and automorphic forms, in geometry ranging from invariant theory to non-commutative resolutions of singularities and as far afield  as  harmonic analysis where the representation theory of $S^1$ appears in the guise of Fourier analysis.

Most finite-dimensional algebras are of wild representation type, that is their representation theory is at least as complicated as that of the free associative algebra in two generators. An algebra that is not wild is of tame representation type.
One particular class of tame algebras, the so-called \emph{ gentle algebras} appear in a surprising number of different contexts. For example, in the context of Fukaya categories related to Kontsevich's homological mirror symmetry program \cite{HKK, LP, OPS},  of dimer models \cite{Bocklandt},  of the enveloping algebras of  Lie algebras  \cite{HK}, and in the context of
cluster theory as \sloppy \linebreak ($m$-)cluster tilted and $m$-Calabi Yau tilted algebras and also as Jacobian algebras associated to unpunctured surfaces \cite{ABCP, BCS,Garcia, LF}. Furthermore, the class of derived-discrete algebras consists of gentle algebras \cite{Vossieck}. 

But there are many other reasons why gentle algebras have been studied extensively. One of the main reasons being that   they are string algebras and that their indecomposable representations are classified by string and band modules \cite{WW}, see also \cite{BR}. The associated string combinatorics  governs the representation theory of gentle algebras, examples of this are  the classification of  morphisms between string and band modules \cite{CB, Krause} and a characterisation of almost split sequences  in terms of string combinatorics \cite{BR}.  Over last few years, interest in gentle algebras has intensified with many new results appearing,  an example of this is the recent work \cite{PPP},  where string combinatorics is used to classify support $\tau$-tilting modules.

Another reason for  the  extensive investigation of gentle algebras is the fact that they are derived tame and the indecomposable objects in the derived category of a gentle algebra have been classified. They are given by the so-called homotopy strings and bands \cite{BM}. In \cite{ALP} the morphisms between string and band complexes  
in the derived category of a gentle algebra were characterised in terms of homotopy string combinatorics and in \cite{CPS,Addendum} a graphical mapping cone calculus based on the morphisms described in \cite{ALP} was developed. 

Extensions between modules are one of the fundamental  cohomological tools. Not only do they play an essential role in the definition of, for example, the Yoneda algebra or Hochschild cohomology, they are also essential in many of the newer developments in  representation theory such as in  cluster tilting in cluster theory. 

The projective resolutions of indecomposable modules over gentle algebras are well understood, see, for example, \cite{Kalck}. So it is surprising that up to now, in general, no complete combinatorial description of the extensions between indecomposable modules over a gentle algebra is known.  A description of certain combinatorially defined extensions between string modules was given in \cite{Schroer}, we will refer to these extensions as \emph{arrow and overlap extensions}. In \cite{Zhang} it was shown that the existence of such extensions is a necessary and sufficient condition for the non-vanishing of the $\Ext^1$-space. 

However, it has remained an open problem for almost twenty years whether these extensions form a basis of the $\Ext^1$-space between string modules and what the extensions involving band modules are. In fact, it has become apparent that string combinatorics in the module category of a gentle algebra might not be enough to answer this question. This has further been confirmed by the recent results in \cite{CS} where based on arguments using the associated cluster category, it was shown that in the context of gentle Jacobian algebras of quivers with potential, the extensions between string modules described in \cite{Schroer} do indeed give a basis.

In this paper, we  answer this open question by giving, for any gentle algebra, a basis of the extension space between indecomposable modules.  More precisely, we explicitly determine the cohomology of the indecomposable objects in the bounded derived category of a gentle algebra given in terms of homotopy strings and bands.  Building on this we give a complete description of the extension space between string and quasi-simple band modules by giving a combinatorial description of a basis. We do this by working not in the module category of a gentle algebra, but we transfer the problem into the derived category, where we are able to use the graphical mapping cone calculus developed in \cite{CPS,Addendum}.
We now state our main results; 
for the relevant definitions and details on notation, see Section~\ref{sec:background} and Definition~\ref{def:extensions}. Throughout the following, $\kk$ will be an algebraically closed field and $\Lambda = \kk Q/I$ will be a gentle algebra.

\begin{introtheorem} \label{thm:strings}
Let $\Lambda$ be a gentle algebra and $v$ and $w$ strings with $M(v)$ and $M(w)$ the corresponding string modules over $\Lambda$. 
The collection of arrow and overlap extensions of $M(v)$ by $M(w)$ form a basis of $\Ext^1_\Lambda(M(v), M(w))$.
\end{introtheorem}

In the following we give a graphical presentation of the strings in an arrow and overlap extension.

\begin{figure}[H]
\[ \resizebox{0.9\textwidth}{!}{
\begin{tikzpicture}[node distance=1cm and 1.5cm]
\coordinate[label=left:{}] (1);
\coordinate[right=1cm of 1] (2);
\coordinate[above right=.8cm of 2,label=right:{}] (3);
\coordinate[right=1cm of 3] (4);
\coordinate[below right=.8cm of 4] (5);
\coordinate[right=1cm of 5] (6);
\coordinate[above right=2cm of 6] (7);
\coordinate[right=1cm of 7] (8);
\coordinate[below right=.8cm of 8] (9);
\coordinate[right=1cm of 9] (10);
\coordinate[below right=.8cm of 10] (11);
\coordinate[right=1cm of 11] (12);
\coordinate[right=11.3cm of 1] (13);
\coordinate[right=1cm of 13] (14);
\coordinate[below right=.8cm of 14] (15);
\coordinate[right=1cm of 15] (16);
\coordinate[above right=.8cm of 16] (17);
\coordinate[right=1cm of 17] (18);

\coordinate[right=.25cm of 6] (6'');
\coordinate[right=1cm of 6''] (7'');

\coordinate[right=2.5cm of 6,label=right:{$\oplus$}] (19);

\coordinate[below right=2cm of 6] (7');
\coordinate[right=1cm of 7'] (8');
\coordinate[above right=.8cm of 8'] (9');
\coordinate[right=1cm of 9'] (10');
\coordinate[above right=.8cm of 10'] (11');
\coordinate[right=1cm of 11'] (12');
\coordinate[above right=.8cm of 12'] (13');

\coordinate[right=5.8cm of 6] (12'');
\coordinate[right=1cm of 12''] (13'');

\coordinate[right=-.3cm of 1] (1');
\coordinate[right=-1cm of 1'] (2');
\coordinate[right=-.6cm of 2',label=right:{$0$}] (3');

\coordinate[right=.3cm of 18] (18');
\coordinate[right=1cm of 18'] (19');
\coordinate[right=.2cm of 19',label=right:{$0$}] (19''');

\draw[thick,->,blue] (2')--(1');
\draw[thick,->,blue] (6'')--(7'');
\draw[thick,->,blue] (12'')--(13'');
\draw[thick,->,blue] (18')--(19');

\draw[thick,decorate,decoration={snake,amplitude=.4mm,segment length=2mm}] 
(1)-- node [anchor=north,scale=.9]{$w_L$} (2) 
(5)-- node [anchor=north,scale=.9]{$w_R$} (6) 
(7)-- node [anchor=south,scale=.9]{$v_L$} (8) 
(7')-- node [anchor=north,scale=.9]{$w_L$} (8') 
(11)-- node [anchor=south,scale=.9]{$w_R$} (12) 
(11')-- node [anchor=north,scale=.9]{$v_R$} (12') 
(13)-- node [anchor=north,scale=.9]{$v_L$} (14) 
(17)-- node [anchor=north,scale=.9]{$v_R$} (18);

\draw[thick] 
(3)-- node [anchor=south,scale=.9]{$m$} (4)
(9)-- node [anchor=south,scale=.9]{$m$} (10)
(9')-- node [anchor=north,scale=.9]{$m$} (10')
(15)-- node [anchor=north,scale=.9]{$m$} (16);

\draw[thick,->] (3)--node [anchor=south east,scale=.9]{$D$}(2); 
\draw[thick,->] (4)--node [anchor=south west,scale=.9]{$\bar{C}$}(5);
\draw[thick,->] (8)--node [anchor=south west,scale=.9]{$\bar{B}$}(9);
\draw[thick,<-] (8')--node [anchor=north west,scale=.9]{$D$}(9');
\draw[thick,->] (10)--node [anchor=south west,scale=.9]{$\bar{C}$}(11);
\draw[thick,<-] (10')--node [anchor=north west,scale=.9]{$A$}(11');
\draw[thick,->] (14)--node [anchor=south west,scale=.9]{$\bar{B}$}(15);
\draw[thick,->] (17)--node [anchor=south east,scale=.9]{$A$}(16);
\end{tikzpicture} }
\]
\end{figure}

\begin{figure}[H]
\[ \resizebox{0.6\textwidth}{!}{
\begin{tikzpicture}[node distance=1cm and 1.5cm]
\coordinate[label=left:{}] (1);
\coordinate[right=1cm of 1] (2);

\coordinate[right=1.3cm of 1] (3);
\coordinate[right=1cm of 3] (4);

\coordinate[below right=.4cm of 4] (5);
\coordinate[right=1cm of 5] (6);

\coordinate[above right=1cm of 6] (7);
\coordinate[right=1cm of 7] (8);

\coordinate[right=3cm of 4] (9);
\coordinate[right=1cm of 9] (10);

\coordinate[right=.3cm of 10] (11);
\coordinate[right=1cm of 11] (12);

\coordinate[right=-.3cm of 1] (1');
\coordinate[right=-1cm of 1'] (2');
\coordinate[right=-.7cm of 2',label=right:{$0$}] (3');

\coordinate[right=.3cm of 12] (12');
\coordinate[right=1cm of 12'] (13');
\coordinate[right=.3cm of 13',label=right:{$0$}] (14');

\draw[thick,->,blue] (3)--(4);
\draw[thick,->,blue] (9)--(10);
\draw[thick,->,blue] (2')--(1');
\draw[thick,->,blue] (12')--(13');

\draw[thick,decorate,decoration={snake,amplitude=.4mm,segment length=2mm}] 
(1)-- node [anchor=north,scale=.9]{$w$} (2) 
(5)-- node [anchor=south,scale=.9]{$w$} (6) 
(7)-- node [anchor=south,scale=.9]{$v$} (8) 
(11)-- node [anchor=north,scale=.9]{$v$} (12);

\draw[thick,<-] 
(6)-- node [anchor=south,scale=.9]{$a$} (7);
\end{tikzpicture} }
\]
\caption{Presentation in terms of strings of an overlap extension (top picture) and an arrow extension (bottom picture) where for an arrow $a \in Q_1$ we denote its formal inverse by $\bar{a}$.} \label{fig:ext}
\end{figure}

We note that in concurrent work \cite{BDMTY}, which builds on \cite{MCC}, a basis for extensions between string modules over a gentle algebra is also given using different techniques.

When a band is involved there are no arrow extensions, only overlap extensions. An extension involving both a string module and a band module has only one indecomposable module as its middle term. An extensions involving two band modules can have as its middle term the direct sum of many indecomposable band modules. The following theorems describe the situation involving band modules more precisely. Given a band $b$ and a scalar $\mu \in \kk^*$, we denote the associated quasi-simple band module by $B(b,\mu)$. A useful comparison for the following statements is the corresponding statements for mapping cones of quasi-graph maps involving a band complex given in \cite{Addendum}.
In the following, for a band $b$, denote by ${}^\infty b^\infty$ (resp. ${}^\infty b$, resp. $b^\infty$) the string obtained from $b$ by repeatedly concatenating $b$ with itself both on the left and on the right (resp. on the left, resp. on the right).

\begin{introtheorem}\label{thm:band-string}
Let  $\Lambda$ be a gentle algebra, $v$ be a string and $(b,\mu)$  be a band with $\mu \in \kk^*$. Suppose that $v$ and ${}^\infty b^\infty$ admit decompositions 
\[
v  = v_L \bar{B} m A v_R \text{ and } {}^\infty b^\infty = {}^\infty b b_L D m \bar{C} b_R b^\infty,
\]
where $A,B,C,D \in Q_1$ with $C \neq \emptyset \neq D$ and $v_L$, $v_R$, $m$, $b_L$ and $b_R$ are (possibly trivial) strings satisfying the conditions of Definition~\ref{def:extensions}\ref{overlap}.
\begin{enumerate}[label=(\alph*)]
\item If, after suitable rotation of $b$, $m$ is a proper subword of $b$, then there is a non-split overlap extension 
\[ 
0 \to B(b,\mu) \to M(u) \to M(v) \to 0,
\]
where $u =  v_L \bar{B} m \bar{C} b_R b_L D m A v_R$ is a string.
\item If $b$ is a subword of $m$, then after suitable rotation of $b$ there is a decomposition $b = b_2 b_1$ such that $m = b^k b_2$ for some $k \geq 1$ and there is a non-split overlap extension
\[ 
0 \to B(b,\mu) \to M(u) \to M(v) \to 0,
\]
where $u = v_L \bar{B} b^{k+1} b_2 A v_R$ is a string.
\end{enumerate}
Moreover, the collection of such extensions forms a basis of $\Ext^1_\Lambda(M(v), B(b, \mu ))$.
\end{introtheorem}

\begin{introtheorem}\label{thm:string-band}
Let $\Lambda$ be a gentle algebra,  $(c,\lambda)$ be a band with $\lambda \in \kk^*$ and $w$ be a string. Suppose that ${}^\infty c^\infty$ and $w$ admit decompositions
\[
{}^\infty c^\infty = {}^\infty c c_L \bar{B} m A c_R c^\infty \text{ and } w  = w_L D m \bar{C} w_R, 
\]
where $A,B,C,D \in Q_1$ with $A \neq \emptyset \neq B$ and $c_L$, $c_R$, $m$, $w_L$ and $w_R$ are (possibly trivial) strings satisfying the conditions of Definition~\ref{def:extensions}\ref{overlap}.
\begin{enumerate}[label=(\alph*)]
\item If, after suitable rotation of $c$, $m$ is a proper subword of $c$, then there is a non-split overlap extension
\[ 
0 \to M(w) \to M(u) \to B(c,\lambda) \to 0,
\]
where $u =  w_L D m A c_R c_L \bar{B} m \bar{C} w_R$ is a string.
\item If $c$ is a subword of $m$, then after suitable rotation of $c$ there is a decomposition $c = c_2 c_1$ such that $m = c^\ell c_2$ for some $\ell \geq 1$ and there is a non-split overlap extension
\[ 
0 \to M(w) \to M(u) \to B(c,\lambda) \to 0,
\]
where $u = w_L D c^{\ell +1} c_2 \bar{C} w_R$ is a string.
\end{enumerate}
Moreover, the collection of such extensions forms a basis of $\Ext^1_\Lambda(B(c, \lambda),M(w))$.
\end{introtheorem}

\begin{introtheorem}\label{thm:band-band}
Let $\Lambda$ be a gentle algebra and $(b,\mu) \neq (c,\lambda)$ be bands with $\lambda,\mu \in \kk^*$. Suppose that ${}^\infty c^\infty$ and ${}^\infty b^\infty$ admit decompositions
\[
{}^\infty c^\infty = {}^\infty c c_L \bar{B} m A c_R c^\infty \text{ and } {}^\infty b^\infty = {}^\infty b b_L D m \bar{C} b_R b^\infty,
\]
where $A,B,C,D \in Q_1$ are each nonempty and $c_L$, $c_R$, $m$, $b_L$ and $b_R$ are (possibly trivial) strings satisfying the conditions of Definition~\ref{def:extensions}\ref{overlap}. 
Then, either
\begin{enumerate}[label=(\alph*)]
\item $m$ is a proper subword of $b$, i.e. after suitable rotation of $b$ there is a decomposition $b = m v$; or,
\item $b$ is a subword of $m$, i.e. after suitable rotation there is a decomposition $b = b_2 b_1$ such that $m = b^k b_2$,
\end{enumerate}
and, either,
\begin{enumerate}[label=(\alph*),resume]
\item $m$ is a proper subword of $c$, i.e. after suitable rotation of $c$ there is a decomposition $c = m w$; or,
\item $c$ is a subword of $m$, i.e. after suitable rotation there is a decomposition $c = c_2 c_1$ such that $m = c^\ell c_2$.
\end{enumerate}
Then, there is a band $d$ and an integer $t \geq 1$ such that 
\[
d^t = 
\begin{cases}
mvmw            & \text{ if (a) \& (c);} \\
mvc_2c_1        & \text{ if (a) \& (d);} \\
b_2 b_1 mw      & \text{ if (b) \& (c);} \\
b_2 b_1 c_2 c_1 & \text{ if (b) \& (d),}
\end{cases}
\]
and a non-split overlap extension
\[
0 \to B(b,\mu) \to \bigoplus_{i=1}^t B(d, \omega^i \sqrt[t]{\pm \lambda\mu^{-1}}) \to B(c,\lambda) \to 0,
\]
where $\omega$ is a primitive $t^{\it th}$ root of unity.
Moreover, the collection of such extensions forms a basis of $\Ext^1_\Lambda(B(c, \lambda),B(b,\mu))$.
\end{introtheorem}

\begin{introtheorem}\label{thm:same-band}
Let $\Lambda$ be a gentle algebra and $(b,\mu)$ be a band with $\mu \in \kk^*$. The collection of extensions in Theorem~\ref{thm:band-band} in which $m \neq b$ together with the Auslander--Reiten sequence,
\[
0 \to B(b,\mu) \to B(b,\mu,\kk^2) \to B(b,\mu) \to 0,
\] 
where $B(b,\mu,\kk^2)$ denotes the $2$-dimensional band module with Jordan block whose eigenvalue is $\mu$, form a basis of $\Ext^1_\Lambda(B(b, \mu),B(b,\mu))$.
\end{introtheorem}

\begin{introremark}
In Theorem~\ref{thm:band-band}, each of the words defining $d^t$ is, after suitable rotation of $b$ and $c$ just the concatenation of the two bands, $b c$. However, different possibilities for $d$ arise from the precise decompositions of $b$ and $c$: for different $m$, concatenations $bc$ with respect to different decompositions need not be equivalent up to inverting the word or cyclic permutation.
\end{introremark}

We now briefly outline the content of the paper, including  the general strategy of the proofs of Theorems~\ref{thm:strings}, \ref{thm:band-string}, \ref{thm:string-band}, \ref{thm:band-band} and \ref{thm:same-band}. Let $\Lambda$ be a gentle algebra. We begin by recalling  the basic notions of string and homotopy string combinatorics for gentle algebras in Section~\ref{sec:background}.  In  Section~\ref{sec:cohomology} we determine the homotopy string or band of the minimal projective resolution of a string or band module  over $\Lambda$ and the cohomology of a string or band complex in $\KminusL$.  

In order to describe the content of Sections~\ref{sec:determining extensions in module category} and \ref{sec:surjective} more precisely, fix the following notation. Let $v$ and $w$ be strings or bands and $M(v)$ and $M(w)$ the corresponding string or quasi-simple band modules. We denote the homotopy strings or bands of their projective resolutions by $\pi(v)$ and $\pi(w)$ and the corresponding string or band complexes by $\Q_{\pi(v)}$ and $\Q_{\pi(w)}$. 
The standard basis of homomorphisms between string and/or band complexes is recalled from \cite{ALP} in Section~\ref{sec:basis}, enabling us to give an explicit description of a basis of
$ \Hom_{\KminusL}(\Q_{\pi(v)}, \Sigma \Q_{\pi(w)})$. 

In the first step in the proof, we show in Section~\ref{sec:determining extensions in module category} that the image of every element of the standard basis under the canonical isomorphism
\begin{equation}\label{isomorphism}
 \Phi: \Hom_{\KminusL}(\Q_{\pi(v)}, \Sigma \Q_{\pi(w)}) \stackrel{\sim}{\to} \Ext^1_\Lambda (M(v),M(w))
 \end{equation}
is either an overlap or an arrow extension. In particular, this shows that the set of overlap and arrow extensions form a generating set for $\Ext^1_\Lambda (M(v),M(w))$.

The second step of the proof, comprising Section 4, shows that the set of overlap and arrow extensions forms a basis of $\Ext^1_\Lambda (M(v),M(w))$. To see this, we show that $\Phi$ restricts to a surjection from the standard basis of $\Hom_{\KminusL}(\Q_{\pi(v)}, \Sigma \Q_{\pi(w)})$ to the set of arrow and overlap extensions in $\Ext^1_\Lambda (M(v),M(w))$.

We emphasise that, with the exception of the case highlighted in the remark above, the methods apply equally to (homotopy or classical) strings and bands.
Furthermore, for ease of the already somewhat heavy notation, in the proofs in Section 3 and 4, whenever we have a map between two band complexes or an extension between two band modules, implicitly and without loss of generality we assume that the parameters of the corresponding band complexes or band modules are equal to one, see \cite[\S 2.3]{CPS} for more details on the placement of parameters with respect to mapping cones.

\subsection*{Acknowledgments}

The second author would like to thank Raquel Coelho Sim\~oes and Rosanna Laking for useful comments and corrections. The authors would also like to thank an anonymous referee for a thorough reading of the article and many useful comments that have significantly improved the exposition.
This work has been supported by the EPSRC through the grants EP/K026364/1, EP/K022490/1 and EP/N005457/1. The third author is supported by the EPSRC through an Early Career Fellowship EP/P016294/1.

\section{Background} \label{sec:background}

In this section we briefly recall the definition of gentle algebras, background on string and band modules, string and band complexes and the standard basis of the morphism spaces between string and band complexes that will be needed in the article.

\subsection{Gentle algebras}

Throughout, $\kk$ will be an algebraically closed field. We recall the following definition from \cite{AS}.

\begin{definition}
A finite-dimensional $\kk$-algebra $\Lambda$ is \emph{gentle} if it is Morita equivalent to a bound path algebra $\kk Q/I$, where $Q$ is a quiver and $I$ an admissible ideal in $\kk Q$ such that
\begin{enumerate}
\item for each vertex $i \in Q_0$ there are at most two arrows starting at $i$ and at most two arrows ending at $i$;
\item for each arrow $a \in Q_1$ there is at most one arrow $b$ with $e(a) = s(b)$ and such that $ba \notin I$ and at most one arrow $c$ with $e(c)= s(a)$ and such that $ac \notin I$;
\item for each arrow $a \in Q_1$ there is at most one arrow $b$ with $e(a) = s(b)$ and such that $ba \in I$ and at most one arrow $c$ with  $e(c)= s(a)$ and such that $ac \in I$;
\item the ideal $I$ is generated by length-two monomial relations.
\end{enumerate}
\end{definition}

From now on $\Lambda = \kk Q/I$ will be a gentle algebra.

\subsection{String and band modules} \label{sec:strings-and-bands}

We now describe strings and bands, which parametrise the indecomposable $\Lambda$-modules. The reference for this material is \cite{BR, WW}. Note that, in this paper all modules will be finitely generated left modules, and therefore paths in the quiver will be read from right to left.

For each arrow $a \in Q_1$ we introduce a formal inverse arrow $\overline{a} = a^{-1}$ with $s(\overline{a}) = e(a)$ and $e(\overline{a}) = s(a)$. We write $\iQ_1$ for the set of formal inverse arrows. Similarly for a path $p = a_n \cdots a_1$  the inverse path is $\overline{p} = \overline{a}_1 \cdots \overline{a}_n$.
Sometimes we shall assert the nonexistence of an arrow or inverse arrow $a$, and in this case we write $a = \emptyset$.

\begin{definitions}
We recall the following notions.
\begin{enumerate}
\item A \emph{walk} of length $l>0$ in $(Q,I)$ is a sequence $w = w_l \cdots w_1$ satisfying $s(w_{i+1}) = e(w_i)$, where each $w_i$ is either an arrow or an inverse arrow, and where the sequence does not contain any subsequence of the form $a \overline{a}$ or $\overline{a} a$ for an arrow $a \in Q_1$. 
We will call each arrow or inverse arrow $w_i$ in $w$ a \emph{letter} of $w$.
\item A \emph{string} is a walk that does not contain subwalks $v$ such that $v \in I$ or $\overline{v} \in I$. In addition, there are \emph{trivial strings} $1_x$ for each vertex $x \in Q_0$.
\item A \emph{band} is a string $b = b_n \cdots b_1$ such that $e(b_n) = s(b_1)$, $b_1 \neq \overline{b_n}$, $b_1 b_n$ is defined as a string, and $b \neq v^m$ for some substring $v$ and $m > 1$.
\end{enumerate}
\end{definitions}

Modulo the equivalence relation $w \sim \overline{w}$ the strings form an indexing set for the so-called \emph{string modules}. Given a string $w$, we write $M(w)$ for the corresponding string module. Note that if $w = 1_x$ is a trivial string $M(w) = S(x)$ is the simple module at $x$. We refer to \cite{BR, WW} for more details on how to construct string modules from strings.

Modulo the equivalence relation given by inversion and cyclic permutation (rotation), the bands together with scalars $\mu \in \kk^*$ form an indexing set for the so-called \emph{band modules},
$B(b,\mu)$,
where by convention we place $\mu$ on a direct arrow.
By abuse of notation, we will usually drop the scalar and write simply $B(b)$ for the corresponding band module. 
Again we refer to \cite{BR} for the actual construction of the band modules.

In order to deal with the word combinatorics involving bands effectively, we will need to consider infinite periodic words corresponding to bands. Let $b$ a band, we write
\begin{align*}
{}^\infty b^\infty & = \cdots \underbrace{b_n \cdots b_1}_b \underbrace{b_n \cdots b_1}_b \underbrace{b_n \cdots b_1}_b \cdots, \\ 
{}^\infty b & = \cdots \underbrace{b_n \cdots b_1}_b \underbrace{b_n \cdots b_1}_b, \quad \text{and,} \\
b^\infty & = \underbrace{b_n \cdots b_1}_b \underbrace{b_n \cdots b_1}_b \cdots.
\end{align*}
In particular, let $(b,\mu)$ and $(c,\lambda)$ be bands, then $(b,\mu) = (c,\lambda)$ if and only if ${}^\infty b^\infty = {}^\infty c^\infty$ or ${}^\infty b^\infty = {}^\infty (c^{-1})^\infty$ and $\lambda = \mu$ with both $\lambda$ and $\mu$ placed on a direct arrow in the infinite words that are equal.

By \cite[Prop. 2.3]{WW}, the string and band modules form a complete set of isomorphism classes of indecomposable $\Lambda$-modules.

The band modules given by representations in which each vertex is replaced by a $1$-dimensional vector space all lie at the mouth of homogeneous tubes and are referred to as quasi-simple (band) modules. They can be characterised as those band modules $B$ such that there exists an almost split sequence of the form $0 \to B \to E \to \tau^{-1} B \to 0$ where $E$ is indecomposable,  see for example \cite{SS}. In the following by abuse of notation, whenever we will use the term band module we will be  referring to a quasi-simple band module.

\subsection{String and band complexes}

We now describe homotopy strings and bands, which parametrise the indecomposable complexes in the derived category $\Db(\Lambda)$. We will use the notation and terminology employed in \cite{ALP,CPS} and the references therein. However, for the sake of brevity we drop some of the formality of \cite{ALP,CPS} regarding the degrees. 

\begin{definitions}
The original reference for the following definitions is \cite{BM}. 
\begin{enumerate}
\item A \emph{(finite) homotopy string} is a walk of finite length in $(Q,I)$. In addition, there are \emph{trivial homotopy strings} for each vertex $x \in Q_0$. 
\item A subwalk $p = w_j \cdots w_i$  of a homotopy string  $\sigma = w_l \cdots w_1$  is a \emph{homotopy letter} if 
\begin{enumerate}
\item $p$ or $\overline{p}$ is a path of length at least one in $(Q,I)$; and,
\item $w_i \in Q_1$ and $w_{i-1} \in \iQ_1$ or vice versa, or $w_i w_{i-1} \in I$, or $\overline{w_{i-1}} \overline{w_i} \in I$; and,
\item $w_j \in Q_1$ and $w_{j+1} \in \iQ_1$ or vice versa, or $w_{j+1} w_j \in I$, or $\overline{w_j} \overline{w_{j+1}} \in I$.
\end{enumerate}
We say that $p$ is a \emph{direct homotopy letter} if it is a path in $(Q,I)$ and an \emph{inverse homotopy letter} if $\overline{p}$ is a path in $(Q,I)$.
In this way we partition a homotopy string $\sigma$ into homotopy letters and write $\sigma = \sigma_n \cdots \sigma_1$ for this decomposition. A \emph{homotopy subletter} of $p$ is  a subwalk of $p$ of length at least one. 
\item A homotopy letter $p = w_l \cdots w_1$, with $w_i \in Q_1$ for $i = 1, \ldots, l$ or $\bar{w}_i \in Q_1$ for $i = 1, \ldots, l$, is said to have \emph{length} $l$ and we write $\length(p) = l$. The length can be zero, in which case $p = 1_x$ for some $x \in Q_0$ and $p$ is called a \emph{trivial homotopy letter}. Sometimes we shall assert the nonexistence of homotopy letters, and in this case we write $p = \emptyset$.

\item Let $\sigma = \sigma_n \cdots \sigma_1$ be a homotopy string decomposed into its homotopy letters. A subwalk $\tau = \sigma_j \cdots \sigma_i$ with $1 \leq i \leq j \leq n$ is called a \emph{homotopy substring} of $\sigma$.
\item A \emph{homotopy band} is a homotopy string $\sigma = \sigma_n \cdots \sigma_1$ with $s(\sigma) = e(\sigma)$, $\sigma_1 \neq \bar{\sigma}_n$, $\sigma \neq \tau^m$ for some homotopy substring $\tau$ and $m >1$, and $\sigma$ has equal numbers of direct and inverse homotopy letters. 
\end{enumerate}
\end{definitions}

\begin{remark}
Throughout the article, whenever we write a walk using Greek letters, such as $\sigma = \sigma_n \cdots \sigma_1$, we will always mean its decomposition into homotopy letters whereas, in general, we reserve Roman letters for (classical) strings and bands. 
\end{remark}

Modulo the equivalence relation $\sigma \sim \overline{\sigma}$ the homotopy strings form an indexing set for the so-called \emph{string complexes}. Given a homotopy string $\sigma$, we write $\P_\sigma$ for the corresponding string complex. Note that if $\sigma = 1_x$ is a trivial homotopy string $\P_\sigma = P(x)$ is the stalk complex of the projective module at $x$.
We refer to \cite{ALP,BM} for more details on how to construct string complexes from homotopy strings; for a sketch of the constructions, see Example~\ref{ex:string-complex} below.

Modulo the equivalence relation given by inversion and cyclic permutation, the homotopy bands together with scalars $\lambda \in \kk^*$ form an indexing set for the so-called \emph{band complexes} $\B_{\sigma,\lambda}$. 
Again we refer to \cite{ALP,BM} for the actual construction of the band complexes.

By \cite[Thm. 3]{BM}, the string and band complexes form a complete set of indecomposable perfect complexes in $\Db(\Lambda)$.  For the remaining objects of $\Db(\Lambda)$ we need some further terminology.

\begin{example}[{\cite[Running Example]{ALP}}] \label{ex:string-complex}
  Let $\Lambda = \kk Q/I$ be given by the following bound quiver:
\[
\begin{tikzpicture}
  \node (0) at (0,0) [smallvertex] {$0$};
  \node (1) at (-1,0.8) [smallvertex] {$1$};
  \node (2) at (-1,-0.8) [smallvertex] {$2$};
  \node (3) at (1,0.8) [smallvertex] {$3$};
  \node (4) at (1,-0.8) [smallvertex] {$4$};
  \draw [->] (0) -- node [above, tinyvertex] {$a$} (1);
  \draw [->] (1) -- node [left, tinyvertex] {$b$} (2);
  \draw [->] (2) -- node [below, tinyvertex] {$c$} (0);
  \draw [->] (0) -- node [below, tinyvertex] {$d$} (4);
  \draw [->] (4) -- node [right, tinyvertex] {$e$} (3);
  \draw [->] (3) -- node [above, tinyvertex] {$f$} (0);
  \draw[dotted,thick] (-0.3,0.2) arc (135:225:8pt);
  \draw[dotted,thick] (0.3,0.2) arc (45:-45:8pt);
  \draw[dotted,thick] (-1,0.35) arc (-90:-20:8pt);
  \draw[dotted,thick] (-1,-0.35) arc (90:20:8pt);
  \draw[dotted,thick] (1,0.35) arc (-90:-160:8pt);
  \draw[dotted,thick] (1,-0.35) arc (90:160:8pt);
\end{tikzpicture}
\] 
  
\medskip
\noindent
Consider the following indecomposable complex in $\Db(\Lambda)$, where we assume the left-most nonzero term is in cohomological degree zero.
\[
\xymatrix{
  0 \ar[r]
  & P(0) \ar[r]^-{\begin{pmat}c & f\end{pmat}}
  & P(2) \oplus P(3) \ar[r]^-{\begin{pmat}b & 0 \\ 0 & e\end{pmat}}
  & P(1) \oplus P(4) \ar[r]^-{\begin{pmat}af \\ 0\end{pmat}}
  & P(3) \ar[r] & 0. \\
}
\]
This complex can be `unfolded' to give the following diagram,
\[
\xymatrix@R=0.4pc{
  P(4) & P(3) \ar[l]_{\bar{e}} & P(0) \ar[l]_{\bar{f}} \ar[r]^{c} & P(2)
  \ar[r]^{b} & P(1) \ar[r]^{af}& P(3). \\
}
\]
The indecomposable projective modules appearing are uniquely determined by the endpoints of the maps, so all information in this complex is contained in the diagram
\begin{equation} \label{unfolded}
\xymatrix@R=0.4pc{
  \xydot & \xydot \ar[l]_{\bar{e}} & \xydot \ar[l]_{\bar{f}} \ar[r]^{c} & \xydot
  \ar[r]^{b} & \xydot \ar[r]^{af}& \xydot \, .
}
\end{equation}
Here the homotopy string $\sigma = \bar{e}\bar{f}cbaf$, and we refer to \eqref{unfolded} as the `unfolded diagram' of $\sigma$. For more details we refer the reader to \cite[\S 2]{ALP}. 
\end{example}

\begin{definitions}
In the following, walks may now be infinite (on both sides).
\begin{enumerate}
\item A walk $w$ is called a \emph{direct antipath} if it is direct and in its decomposition into homotopy letters, each homotopy letter has length $1$;
it is called an \emph{inverse antipath} if it is inverse and in its decomposition into homotopy letters, each homotopy letter has length $1$.
\item A left infinite walk $w = \cdots w_n \cdots w_2 w_1$ is a \emph{left infinite homotopy string} if there exists $m \geq 1$ such that $v = \cdots w_n \cdots w_{m+1} w_m$ is a direct antipath.
\item A right infinite walk $w = w_{-1} w_{-2} \cdots w_{-n} \cdots$ is a \emph{right infinite homotopy string} if there exists $m \geq 1$ such that $v = w_{-m} w_{-m-1} \cdots w_{-n} \cdots$ is an inverse antipath.
\item A two sided infinite walk $w = \cdots w_2 w_1 w_0 w_{-1} \cdots$ is called a \emph{two-sided infinite homotopy string} if there exist integers $n > m$ such that $\cdots v_{n+1} v_n$ is a direct antipath and $v_m v_{m-1} \cdots$ is an inverse antipath.
\item By a \emph{one-sided infinite homotopy string} we mean either a left infinite homotopy string or a right infinite homotopy string. 
\end{enumerate}
\end{definitions}

By \cite[Thm. 3]{BM} the indecomposable non-perfect complexes in $\Db(\Lambda)$ are parametrised by the one-sided and two-sided infinite homotopy strings; they are again called \emph{string complexes}. In the following, we write
\[
\Q_\sigma =
\left\{
\begin{array}{ll}
\P_\sigma                 & \text{if $\sigma$ is a (possibly infinite) homotopy string;} \\
\B_{\sigma,\lambda} & \text{if $\sigma$ is a homotopy band.}
\end{array}
\right.
\]
From now on, by abuse of terminology, we say homotopy string for a (possibly infinite) homotopy string.

\subsection{The standard basis} \label{sec:basis}

A basis for the morphism space between indecomposable complexes in $\Db(\Lambda)$ was determined in \cite{ALP}. 
Here we briefly recall this basis, which we shall refer to as the \emph{standard basis}. 
As observed in Example~\ref{ex:string-complex}, homotopy strings and bands correspond to an unfolding of the corresponding string and band complexes. Throughout the paper, we shall freely make use of the \emph{unfolded diagram} notation for string and band complexes from \cite{ALP,CPS}.

\begin{theorem}[{\cite[Theorem 3.15]{ALP}}] \label{thm:ALP}
Let $\sigma$ and $\tau$ be homotopy strings or bands. Then there is a canonical basis of $\Hom_{\Db(\Lambda)}(\Q_\sigma,\Q_\tau)$ given by:
\begin{itemize}
\item graph maps $\f \colon \Q_\sigma \to \Q_\tau$;
\item singleton single maps $\f \colon \Q_\sigma \to \Q_\tau$;
\item singleton double maps $\f \colon \Q_\sigma \to \Q_\tau$;
\item quasi-graph maps $\phi \colon \Q_\sigma \rightsquigarrow \Sigma^{-1} \Q_\tau$.
\end{itemize}
\end{theorem}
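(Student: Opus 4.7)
The plan is to work in the equivalent category $\sK^{b,-}(\proj \Lambda)$, where every homotopy string or band $\sigma$ is realised as an unfolded complex whose components in each degree are sums of indecomposable projectives $P(x)$ indexed by the vertices traversed by $\sigma$, and whose differentials are built from the homotopy letters of $\sigma$ viewed as paths in $(Q,I)$. Given $\sigma$ and $\tau$, any chain map $\f \colon \Q_\sigma \to \Q_\tau$ decomposes into components $f_{x,y}\colon P(x)\to P(y)$, and each such component is a $\kk$-linear combination of paths from $y$ to $x$ in $(Q,I)$. The gentleness hypothesis sharply restricts the supply of available paths: between any two vertices the space $\Hom_\Lambda(P(x),P(y))$ is spanned by the unique (if it exists) maximal path $x \leadsto y$ and its non-zero subpaths, and multiplication of paths is either zero or again a path.

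First I would enumerate all candidate components on the unfolded diagrams by matching homotopy subletters of $\sigma$ with those of $\tau$. Next I would impose the chain-map equations $d_\tau f = f d_\sigma$. Because the differentials are themselves paths, gentleness ensures that each such equation is either automatically satisfied or forces a component to vanish; the non-trivial equations couple only locally along a common \emph{overlap} between $\sigma$ and $\tau$. Solving the local conditions at the two ends of such an overlap yields exactly three combinatorial configurations, corresponding to the three families of chain maps in the statement: a \textbf{graph map}, where the overlap is ``closed'' at both endpoints by compatible arrow/inverse-arrow configurations; a \textbf{singleton single map}, where the overlap is a single homotopy letter forming the whole component; and a \textbf{singleton double map}, where a length-two direct-then-inverse or inverse-then-direct configuration provides room for a nonzero component. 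Any chain map is then a unique $\kk$-linear combination of these, by a direct degree-wise linear algebra argument.

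The second step is to pass to the homotopy category by computing the null-homotopic chain maps. I would parametrise homotopies $h\colon \Q_\sigma \to \Q_\tau[-1]$ by the same combinatorial matching of components, and compute the boundary $d_\tau h + h d_\sigma$. A careful bookkeeping shows that nonzero boundaries arise precisely from overlaps whose endpoint configuration is neither ``closed'' (as in a graph map) nor produces a singleton: this intermediate case is exactly the \textbf{quasi-graph overlap}. Each such overlap yields a linear relation between a graph map and a singleton double map, and this is encoded by the formal wiggly arrow $\phi\colon \Q_\sigma \rightsquigarrow \Sigma^{-1}\Q_\tau$. Quotienting the space of chain maps by these relations leaves precisely the claimed four families, and one checks directly that the remaining graph maps, singleton single maps and singleton double maps are linearly independent modulo homotopy by reading off their support on the unfolded diagrams, which are pairwise distinct.

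The main obstacle will be the bookkeeping in two places. First, when $\sigma$ or $\tau$ is one- or two-sided infinite, the parametrisation of components and homotopies must be controlled so that only finitely many are nonzero in each cohomological degree and so that the induction along the direct/inverse antipath tails terminates; gentleness again saves the day because each antipath is eventually forced to be zero on composition with any fixed letter on the other side. Second, the band case requires that the cyclic identification be respected, so that components, and then homotopies, glue consistently around the band; here the scalar $\lambda \in \kk^*$ twists the differential and one must verify that the chain-map equations at the seam impose no additional constraints beyond those already accounted for. Degenerate overlaps in which endpoints coincide or in which a graph map collapses to a singleton map also need separate treatment to ensure the dictionary between the combinatorics and the basis is bijective rather than merely spanning.
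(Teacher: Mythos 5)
First, note that the paper itself offers no proof of this statement: it is quoted verbatim from \cite[Theorem 3.15]{ALP}, so your attempt has to be measured against the argument given there, which occupies a substantial part of that paper.

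There is a genuine gap in your first step. Graph maps, singleton single maps and singleton double maps do \emph{not} span the space of chain maps $\Q_\sigma \to \Q_\tau$: at the chain level one also has the non-singleton single and double maps (single maps whose component $f$ is a path not of the special factorisation form \eqref{singleton-single}, and double maps not admitting the factorisation $\sigma_C = f_L f'$, $\tau_C = f' f_R$), and these are honest chain maps that are not $\kk$-linear combinations of your three families. So your claimed ``direct degree-wise linear algebra argument'' cannot work as stated. The role of the quasi-graph maps is precisely to account for these leftover maps after passing to homotopy: a quasi-graph map $\phi \colon \Q_\sigma \rightsquigarrow \Sigma^{-1}\Q_\tau$ is a graph-map-shaped diagram whose endpoint squares fail to commute, and it parametrises a whole family of mutually homotopic non-singleton single and double maps, of which one chooses a representative. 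It is not, as you write, ``a linear relation between a graph map and a singleton double map''; with that reading your second step quotients by the wrong relations and would not recover the stated basis. Two further points you wave away are in fact the hard part of \cite{ALP}: the endpoint conditions (your claim that gentleness makes each chain-map equation ``automatically satisfied or forces a component to vanish'' is exactly what must be analysed case by case to produce conditions such as (L1)--(R2) and the graph/quasi-graph endpoint dichotomy), and linear independence in the homotopy category, which cannot be read off from ``pairwise distinct supports'' since a graph map and a single or double map can be supported in the same region of the unfolded diagrams; one must show that no nontrivial linear combination of the surviving families is null-homotopic.
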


We note that a quasi-graph map is not a map, but in fact determines classes of homotopy equivalent single and double maps, which is why we denote it by $\rightsquigarrow$ and not $\to$. 
 
Throughout the following description of the maps listed above, $\sigma$ and $\tau$ will be homotopy strings or bands. 
 
\subsubsection{Graph maps} \label{sec:graph-maps}

Suppose $\sigma$ and $\tau$ are, up to inversion, of the form,
\begin{enumerate}
\item $\sigma = \beta \sigma_L \rho \sigma_R \alpha$ and $\tau = \delta \tau_L \rho \tau_R \gamma$; or
\item $\sigma = \rho \sigma_R \alpha$ and $\tau = \rho \tau_R \gamma$,
\end{enumerate}
where $\alpha, \beta, \gamma$ and $\delta$ are homotopy substrings, $\sigma_L, \sigma_R, \tau_L$ and $\tau_R$ are (possibly trivial) homotopy letters, and $\rho$ is a (possibly trivial) maximal common homotopy substring, and in the second case an infinite homotopy substring of $\sigma$ and $\tau$. We assume that $\rho$ occurs in the same cohomological degrees in both homotopy strings. Then the corresponding graph maps can be represented by the following unfolded diagrams: 
\begin{equation}\tag{1} \label{finite-graph-map}
\xymatrix@!R=5px{
\Q_\sigma \colon & \ar@{~}[r]^-{\beta} & \xydot \arr^-{\sigma_L} \ar[d]_-{f_L} \ar@{}[dr]|{(*)} & \xydot \arr^-{\rho_k} \ar@{=}[d] & \xydot \arr^-{\rho_{k-1}} \ar@{=}[d] & \cdots \arr^-{\rho_2}  & \xydot \arr^-{\rho_1} \ar@{=}[d] & \xydot \arr^-{\sigma_R} \ar@{=}[d] \ar@{}[dr]|{(**)} & \xydot \ar@{~}[r]^-{\alpha} \ar[d]^-{f_R} & \\
\Q_\tau \colon      & \ar@{~}[r]_-{\delta} & \xydot \arr_-{\tau_L}                                                & \xydot \arr_-{\rho_k}                  & \xydot \arr_-{\rho_{k-1}}                  & \cdots \arr_-{\rho_2}  & \xydot \arr_-{\rho_1}                  & \xydot \arr_-{\tau_R}                                                    & \xydot \ar@{~}[r]_-{\gamma}                 & 
}
\end{equation}
\begin{equation}\tag{2} \label{infinite-graph-map}
\xymatrix@!R=5px{
\P_\sigma \colon & \ar@{.}[r] & \xydot \arr^-{\rho_3} \ar@{=}[d] & \xydot \arr^-{\rho_2} \ar@{=}[d] & \xydot \arr^-{\rho_1} \ar@{=}[d] & \xydot \arr^-{\sigma_R} \ar@{=}[d] \ar@{}[dr]|{(**)} & \xydot \ar@{~}[r]^-{\alpha} \ar[d]^-{f_R} & \\
\P_\tau \colon      & \ar@{.}[r] & \xydot \arr_-{\rho_3}                 & \xydot \arr_-{\rho_2}                   & \xydot \arr_-{\rho_1}                  & \xydot \arr_-{\tau_R}                                                & \xydot \ar@{~}[r]_-{\gamma}                 & 
}
\end{equation}
where we require the squares marked $(*)$ and $(**)$ to commute; these are explicitly written down in \cite[\S 3.2]{ALP}. The maximality of $\rho$ as a common homotopy substring of $\sigma$ and $\tau$ necessarily means that $\sigma_L \neq \tau_L$ and $\sigma_R \neq \tau_R$. Note that in the case of \ref{sec:graph-maps}(2), $\rho$ is an antipath and  we say that the graph map $\f$ is \emph{incident with $\rho$}.

\subsubsection{Single maps} \label{sec:single}

The unfolded diagram of a single map $\f \colon \Q_\sigma \to \Q_\tau$ is given by 
\begin{equation} \label{single}
\xymatrix@!R=5px{
\Q_\sigma : \ar[d]_-{\f} & \ar@{~}[r]^-{\beta}  & \xydot \arr^-{\sigma_L} & \xydot \arr^-{\sigma_R} \ar[d]^-{f} & \xydot \ar@{~}[r]^-{\alpha} & \\
\Q_\tau      :                 & \ar@{~}[r]_-{\delta} & \xydot \arr_-{\tau_L}     & \xydot \arr_-{\tau_R}                      & \xydot \ar@{~}[r]_-{\gamma} &
}
\end{equation}
where $f$ is a nontrivial path in $(Q,I)$, and satisfying the following conditions:
\begin{itemize}
\item[(L1)] if $\sigma_L \neq \emptyset$ then $\sigma_L$ is either inverse or is direct and $\sigma_L f$ has a subpath in $I$.
\item[(L2)] if $\tau_L \neq \emptyset$ then $\tau_L$ is either direct or is inverse and $f \bar{\tau}_L$ has a subpath in $I$.
\item[(R1)] if $\sigma_R \neq \emptyset$ then $\sigma_R$ is either direct or is inverse and $\bar{\sigma}_R f$ has a subpath in $I$.
\item[(R2)] if $\tau_R \neq \emptyset$ then $\tau_R$ is either inverse or is direct and $f \tau_R$ has a subpath in $I$.
\end{itemize}

A single map $\f \colon \Q_\sigma \to \Q_\tau$ is called a \emph{singleton single map} if its unfolded diagram, up to inversion of one of the homotopy strings/bands, is
\begin{equation} \label{singleton-single}
\xymatrix@!R=5px{
\Q_\sigma : \ar[d]_-{\f} & \ar@{~}[r]^-{\beta}   & \xydot \arr^-{\sigma_L} & \xydot \ar[d]^{f} \ar[r]^-{\sigma_R = f f_R} & \xydot  \ar@{~}[r]^-{\alpha}                                                                      & \\
\Q_\tau :                      & \ar@{~}[r]_-{\delta}  & \xydot \arr_-{\tau_L}     & \xydot                                                        & \ar[l]^-{\tau_R = \overline{f} \overline{f_L}} \xydot \ar@{~}[r]_-{\gamma} &
                }
\end{equation}
where $\sigma_L$ and $\tau_L$ never contain $f$ as a subletter, and whenever $\sigma_L$ is inverse or $\tau_L$ is direct, $f$ does not contain $\sigma_L$ or $\tau_L$ as a subletter, and any of $\sigma_L$, $\sigma_R$, $\tau_L$ and $\tau_R$ are permitted to be the empty homotopy letter $\emptyset$. 

\subsubsection{Double maps}

The unfolded diagram of a double map $\f \colon \Q_\sigma \to \Q_\tau$ is 
\begin{equation} \label{double}
\xymatrix@!R=5px{
\Q_\sigma :  & \ar@{~}[r]^-{\beta}  & \xydot \arr^-{\sigma_L} & \xydot \ar[r]^-{\sigma_C} \ar[d]_-{f_L} & \xydot \arr^-{\sigma_R} \ar[d]^-{f_R} & \xydot \ar@{~}[r]^-{\alpha} &  \\
\Q_\tau      :  & \ar@{~}[r]_-{\delta} & \xydot \arr_-{\tau_L}     & \xydot \ar[r]_-{\tau_C}                         & \xydot \arr_-{\tau_R}                          & \xydot \ar@{~}[r]_-{\gamma} &
}
\end{equation}
where $f_L$ and $f_R$ are nontrivial paths in $(Q,I)$ such that $f_L \tau_C = \sigma_C f_R$ has no subpath in $I$, conditions (L1) and (L2) hold for $f_L$ and (R1) and (R2) hold for $f_R$.

A double map, as above, is called \emph{singleton} if there is a nontrivial path $f'$ in $(Q,I)$ such that $\sigma_C = f_L f'$ and $\tau_C = f' f_R$.

\subsubsection{Quasi-graph maps}\label{subsec:quasi-graph}

If, in the situation of Section~\ref{sec:graph-maps}, the squares marked $(*)$ and $(**)$ of diagrams \eqref{finite-graph-map} and \eqref{infinite-graph-map} do not commute, then such diagrams determine a \emph{quasi-graph map} $\phi \colon \Q_\sigma \rightsquigarrow \Q_\tau$. The non-commuting endpoint conditions are explicitly spelled out in \cite[\S 1.4.4]{CPS}. Note that, while a quasi-graph map $\Q_\sigma \rightsquigarrow \Q_\tau$ does not define a map, a quasi-graph map $\phi \colon \Q_\sigma \rightsquigarrow \Sigma^{-1} \Q_\tau$  determines a family of homotopy equivalent single and/or double maps. Indeed, all single and double maps that are not singleton arise in this way.

The following observation will be useful in the proofs in Section~\ref{sec:surjective}.

\begin{remark} \label{rem:quasi-graph}
Suppose, in the unfolded diagram \eqref{finite-graph-map} above, $\rho_1$ is not the start of both $\sigma$ and $\tau$ and $\rho_k$ is not the end of both $\sigma$ and $\tau$. In this case, the diagram defines a graph map $\f \colon \Q_\sigma \to \Q_\tau$ if and only if the same diagram, when read upside down, i.e. from bottom to top, defines a quasi-graph map $\phi \colon \Q_\tau \wiggle \Q_\sigma$.
Note that we do not read $f_L$ and $f_R$ upside down: they occur as homotopy subletters of $\sigma_L$ or $\tau_L$ (resp. $\sigma_R$ or $\tau_R$). For example, consider the following case:
\[
\xymatrix@!R=5px{
\xydot \ar@{=}[d] \ar[r]^-{\sigma_R}  & \xydot \ar[d]^-{f_R} \ar@{~}[r]^-{\alpha} & \\
\xydot \ar[r]_-{\tau_R = \sigma_R f_R} & \xydot \ar@{~}[r]_-{\gamma} & 
},
\quad \text{versus the same situation `upside down',} \quad
\xymatrix@!R=5px{
\xydot \ar@{=}[d] \ar[r]^-{\tau_R = \sigma_R f_R}  & \xydot \ar@{~}[r]^-{\gamma} & \\
\xydot \ar[r]_-{\sigma_R}                          & \xydot \ar@{~}[r]_-{\alpha} & 
}.
\]
Here the fact that $\tau_R = \sigma_R f_R$ is the obstacle to the commuting of the right endpoint when the `graph map' is read upside down, giving a (non-null-homotopic) quasi-graph map endpoint condition; see \cite[\S 1.4.4]{CPS}.
\end{remark}

\subsection{Morphisms vs. extensions}

For background on derived and homotopy categories we refer to \cite{Happel}. One of the powerful features of the derived category is that it reformulates extensions in the module category in terms of morphisms. In particular, for any algebra $\Lambda$, and any $\Lambda$-modules $M$ and $N$ we have

\smallskip

\begin{tabularx}{\linewidth}{C{1}  E  C{1.75}  E  C{1.75}}
$\Hom_{\sK}(\P_M, \Sigma \P_N)$ & $\simeq$    & $\Ext^1_{\sK}(\P_M, \P_N)$                                                         & $\simeq$    &  $\Ext^1_\Lambda (M,N)$, \\
\scalebox{0.9}{$\P_M \rightlabel{\f} \Sigma \P_N$}  & $\mapsto$ &  \scalebox{0.9}{$\P_N \too C^\bullet_{\f} \too \P_M \rightlabel{\f} \Sigma \P_N$}  & $\mapsto$ & \scalebox{0.9}{$0 \to N \to H^0(C^\bullet_{\f}) \to M \to 0$}
\end{tabularx}

\smallskip

\noindent
where $\sK = \KminusL$, $\P_M$ and $\P_N$ are projective resolutions of $M$ and $N$, respectively, and $C^\bullet_{\f}$ is the (negative shift of the) mapping cone of $\f$. In particular, computation of a basis of the Ext-space $\Ext^1_\Lambda (M,N)$ reduces to the computation of a basis of the Hom-space $\Hom_{\KminusL}(\P_M, \Sigma \P_N)$.

\section{Cohomology of string and band complexes}\label{sec:cohomology}

Throughout $\sigma$ will be a (possibly infinite) homotopy string or band, unless one is specified explicitly. When we wish to specify that $\sigma$ is finite on the right we will write $\sigma = \cdots \sigma_2 \sigma_1$, finite on the left: $\sigma = \sigma_n \sigma_{n-1} \cdots$, and finite on both sides: $\sigma = \sigma_n \cdots \sigma_1$.

Given a homotopy string or band $\sigma$ we will describe how to compute the cohomology of the string or band complex $\Q_\sigma$. The strategy is to divide $\sigma$ up into various homotopy substrings each corresponding to appropriately chosen two-term complexes.
We start with an important technical definition.

\begin{definition}
Let $\sigma$ be a homotopy string or band. A homotopy substring $\tau = \sigma_j \cdots \sigma_i$ with  $i < j$ is a \emph{maximal alternating homotopy substring} if 
\begin{enumerate}[label=(\roman*)]
\item for each $i \leq k < j$, if $\sigma_k$ is direct (resp., inverse) then $\sigma_{k+1}$ is inverse (resp., direct);
\item if $\sigma_i$ is direct (resp., inverse) then $\sigma_{i-1}$ is direct (resp., inverse) and $\sigma_i \sigma_{i-1}  \in I$ (resp., $\overline{\sigma_{i-1} \sigma_i} \in I$) or is  $\emptyset$; and,
\item if $\sigma_j$ is direct (resp., inverse) then $\sigma_{j+1}$ is direct (resp., inverse) and $\sigma_j \sigma_{j+1} \in I$ (resp., $\overline{\sigma_{j+1} \sigma_j } \in I$ or is  $\emptyset$.
\end{enumerate}
If only condition (i) holds, then $\tau$ is called an \emph{alternating homotopy substring}.
\end{definition}

\begin{remark}
Let $\sigma$ be a homotopy string or band and $\tau = \sigma_j \cdots \sigma_i$ with $i < j$ be a maximal alternating homotopy substring of $\sigma$.
\begin{enumerate}
\item The homotopy string $\tau$ has at least two homotopy letters.
\item The string complex $\P_\tau$ is concentrated in precisely two cohomological degrees, namely $\deg P(s(\sigma_i))$ and $\deg P(e(\sigma_i))$, i.e. it is a `two-term complex'.
\item A maximal alternating homotopy substring of a homotopy string or band cannot be infinite: all infinite homotopy strings have antipaths to the left and/or to the right.
\item Since no two consecutive homotopy letters of $\tau$ `pass through a relation', the underlying walk of $\tau$ also determines a string. In the case that $\sigma = \sigma_n \cdots \sigma_1$ is a homotopy band and $\tau = \sigma$, then the underlying walk of $\tau$ also determines a band.
\end{enumerate}
\end{remark}

\begin{lemma}[Maximal alternating homotopy substring rule] \label{lem:max-alternating}
Let $\sigma$ be a homotopy string or band. Suppose $\tau = \sigma_j \cdots \sigma_i$ is a maximal alternating homotopy substring. Decompose the homotopy letters $\sigma_j = b_l \cdots b_1$ and $\sigma_i = a_k \cdots a_1$ into paths or inverse paths in $(Q,I)$ and set 
\[
w \coloneqq 
\left\{
\begin{array}{ll}
  b_{l-1} \cdots b_1 \sigma_{j-1} \cdots \sigma_{i+1} a_k \cdots a_2 & \parbox[c]{0.56\textwidth}{ if $ \tau \neq \sigma$ or $\tau = \sigma$ and $\sigma$ is a homotopy string with $\sigma_1$ inverse and $\sigma_n$ direct;} \\
\sigma                                                                                               & \text{if } \tau = \sigma \text{ and } \sigma \text{ is a homotopy band.}
\end{array}
\right.
\]
Then the string module $M(w)$ (resp., band module $B(w)$) is an indecomposable summand of the cohomology module $H^d(\Q_\sigma)$, where $d = \max\{ \deg P(s(\sigma_i)), \deg P(e(\sigma_i))\}$.
\end{lemma}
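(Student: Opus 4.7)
The plan is to compute $H^d(\Q_\sigma)$ locally by restricting to the subcomplex $\P_\tau$ of $\Q_\sigma$ corresponding to $\tau$. Since $\tau = \sigma_j \cdots \sigma_i$ is alternating, $\P_\tau$ is concentrated in exactly two cohomological degrees $d-1$ and $d$, with $P^d$ the direct sum of indecomposable projectives at the ``peak'' vertices of $\tau$ (those in degree $d$) and $P^{d-1}$ the sum at the ``valleys''. The proof splits into two parts: (i) show that $H^d(\P_\tau)$ is a direct summand of $H^d(\Q_\sigma)$; and (ii) compute $H^d(\P_\tau)$ directly and identify it with $M(w)$ (or $B(w)$ in the homotopy-band case).

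For (i), the key ingredient is the maximality of $\tau$. Any homotopy letter $\sigma_{j+1}$ adjacent to $\tau$ must, by maximality, have the same type as $\sigma_j$, and therefore, by the definition of the homotopy-letter decomposition, compose with $\sigma_j$ through a relation at the shared vertex $e(\sigma_j) = s(\sigma_{j+1})$. Tracing through the two type possibilities one checks that the differential associated to $\sigma_{j+1}$ connects cohomological degrees $(d-2,d-1)$ or $(d,d+1)$, but never $(d-1,d)$; the symmetric statement holds for $\sigma_{i-1}$. Consequently the $(d-1,d)$-part of the differential of $\Q_\sigma$, restricted to the $\tau$-summand of $P^d$, coincides with the differential of $\P_\tau$, so $H^d(\P_\tau)$ is a direct summand of $H^d(\Q_\sigma)$.

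For (ii), I compute $\mathrm{coker}(P^{d-1} \to P^d)$ in $\P_\tau$ directly. Expanding $\sigma_j = b_l \cdots b_1$ and $\sigma_i = a_k \cdots a_1$ into individual arrows, the differential from the valley summand adjacent to $\sigma_i$ is multiplication by the path $\sigma_i$ (or $\bar\sigma_i$ if $\sigma_i$ is inverse), whose image in the adjacent peak summand is the left ideal generated by that length-$k$ path. The resulting cokernel has surviving basis corresponding to the walk $a_k \cdots a_2$, i.e.\ $\sigma_i$ with its outermost arrow $a_1$ stripped off. An analogous computation at $\sigma_j$ strips $b_l$. The remaining internal differentials of $\tau$ are all multiplications by single arrows (with no relations, since $\tau$ alternates) and merely identify basis elements of the surviving radical layers along the walk $\sigma_{j-1} \cdots \sigma_{i+1}$. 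Gluing everything together produces the string representation on $w = b_{l-1} \cdots b_1 \sigma_{j-1} \cdots \sigma_{i+1} a_k \cdots a_2$, that is, $M(w)$. In the homotopy-band case $\tau = \sigma$, the same local computation runs cyclically and yields $B(\sigma)$.

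The main obstacle will be keeping the bookkeeping correct across the various configurations: four sub-cases arise from the types (direct or inverse) of $\sigma_i$ and $\sigma_j$, and additional edge cases appear when $\tau$ reaches an end of $\sigma$ so that $\sigma_{j+1}$ or $\sigma_{i-1}$ does not exist (handled by the ``or $\emptyset$'' clause in the definition of maximality), or when $\sigma_j$ or $\sigma_i$ has length one so that stripping the outermost arrow leaves a trivial letter (in which case the corresponding endpoint of $w$ becomes a trivial string). A secondary check is that the cokernel is genuinely indecomposable, which rests on the absence of relations at any interior vertex of $\tau$ and the walk-connectedness of $w$ inherited from $\tau$.
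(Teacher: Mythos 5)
Your overall architecture (isolate the two-term piece of the differential determined by $\tau$, compute locally, glue along the valleys) is close in spirit to the paper's, but step (i) contains a genuine gap that propagates into step (ii). The cohomology $H^d(\Q_\sigma)$ is $\ker(\partial^d)/\im(\partial^{d-1})$, and the adjacent letter that you dismiss because it ``connects $(d,d+1)$'' is precisely the part of $\partial^d$ that matters. In the configuration the paper draws, $\sigma_i$ is direct and, by maximality, $\sigma_{i-1}$ (when it exists) is also direct, so the extreme position $s(\sigma_i)$ of $\tau$ carries a degree-$d$ projective with an \emph{outgoing} component $\cdot\,\sigma_{i-1}\colon P(s(\sigma_i))\to P(s(\sigma_{i-1}))$ of $\partial^d$. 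The contribution of this end to $H^d(\Q_\sigma)$ is therefore $\ker(\cdot\,\sigma_{i-1})/\im(\cdot\,\sigma_i)$, not $\operatorname{coker}(\cdot\,\sigma_i)$. These differ: $\operatorname{coker}(\cdot\,\sigma_i)=P(s(\sigma_i))/\Lambda a_k\cdots a_1$ still contains the class of the top $e_{s(\sigma_i)}$ and the entire second branch of $\rad P(s(\sigma_i))$, none of which occur in $M(w)$. So $H^d(\P_\tau)$ is \emph{not} in general a direct summand of $H^d(\Q_\sigma)$, and your part (ii) claim that the cokernel has ``surviving basis corresponding to the walk $a_k\cdots a_2$'' is false as a computation of that cokernel.

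The stripping of the outermost arrow is produced by two different mechanisms at the two ends, and only one is a cokernel phenomenon. At an end whose extreme position of $\tau$ sits in degree $d-1$ (the $\sigma_j$-end in the paper's diagram), the image of the end projective under $\cdot\,\sigma_j$ truncates that branch at $b_l\cdots b_1$, exactly as you describe. At an end whose extreme position sits in degree $d$ (the $\sigma_i$-end above), the truncation instead comes from the kernel condition: since $\sigma_{i-1}$ has the same orientation as $\sigma_i$, the homotopy-letter decomposition forces $a_1c\in I$ for the last arrow $c$ of $\sigma_{i-1}$, and gentleness then gives $\ker(\cdot\,\sigma_{i-1})=\Lambda a_1$, whose quotient by $\Lambda a_k\cdots a_1$ is the layer supported on $e(a_1),\ldots,e(a_{k-1})$ that glues onto the adjacent valley's top to yield $\ldots\sigma_{i+1}a_k\cdots a_2$. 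This is why the paper computes with $\ker(\partial^d)/\im(\partial^{d-1})$ directly, and why it lists only $P(s(\sigma_{i+2})),\ldots,P(s(\sigma_j))$, and not $P(s(\sigma_i))$, as lying in $\ker(\partial^d)$. Your treatment of the band case and of valley-type ends is fine, but without the kernel analysis the peak-type ends give the wrong module, so the argument as written does not prove the lemma; note also that when $\sigma_{i-1}=\emptyset$ the full cokernel genuinely does survive, which is exactly why the paper defers those configurations to the combined and cokernel rules rather than to this lemma's formula.
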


\begin{proof}
Suppose $\sigma$ is a homotopy string and $(\Q_\sigma, \partial^\bullet)$ is the corresponding string complex.
We treat the case that the maximal alternating homotopy substring $\tau$ has unfolded diagram of the form below; the other cases, and the case that $\sigma$ is a homotopy band, are similar.
\[
\xymatrix{
\ar@{.}[r] & \xydot  \ar[r]^-{\sigma_{j+1}} & \xydot \ar[r]^-{\sigma_j} & \xydot & \ar[l]_-{\sigma_{j-1}} \xydot \ar@{.}[r] & \xydot & \ar[l]_-{\sigma_{i+1}} \xydot \ar[r]^-{\sigma_i} & \xydot \ar[r]^-{\sigma_{i-1}} & \xydot \ar@{.}[r] & 
}
\]
Note that, in this case $d = \deg P(s(\sigma_i))$ and the homotopy letters $\sigma_i, \ldots, \sigma_j$ are components of the differential $\partial^{d-1}$. In particular, we can wrap $\tau$  back up into a complex:
\[
\xymatrix@!R=3mm{
P(e(\sigma_{j+1})) \ar[r]^-{\sigma_{j+1}} & P(e(\sigma_j)) \ar[r]^-{\sigma_j} \ar@{}[d]|\oplus                                          & P(e(\sigma_{j-1})) \ar@{}[d]|\oplus      & \\
                                                                & P(e(\sigma_{j-2})) \ar[ur]^-{\sigma_{j-1}} \ar[r]^-{\sigma_{j-2}} \ar@{}[d]|\oplus  & P(e(\sigma_{j-3})) \ar@{}[d]|\oplus   & \\
                                                                & P(e(\sigma_{j-4})) \ar[ur]^-{\sigma_{j-3}} \ar[r]^-{\sigma_{j-4}} \ar@{}[d]|\oplus  & \, \vdots \ar@{}[d]|\oplus   & \\
                                                                & \vdots \, \ar[r]^-{\sigma_{i+2}} \ar@{}[d]|\oplus                                                   & P(e(\sigma_{i+1})) \ar@{}[d]|\oplus      & \\
                                                                & P(e(\sigma_i)) \ar[r]^-{\sigma_i} \ar[ur]^-{\sigma_{i+1}}                                & P(s(\sigma_i)) \ar[r]^-{\sigma_{i-2}} & P(s(\sigma_{i-1}))  
}
\]
The other components of the differentials $\partial^{d-2}$, $\partial^{d-1}$ and $\partial^d$ are disconnected from the components of $\partial^{d-1}$ indicated above. The components above therefore contribute a summand, $M$ say, of the cohomology module $H^d(\Q_\sigma)$; the other summands of $H^d(\Q_\sigma)$ are contributed by other parts of  $\sigma$. We claim that $M \cong M(w)$, where $w$ is the string defined in the statement.

The projective modules $P(e(\sigma_{i+1})),P(e(\sigma_{i+3})), \ldots, P(e(\sigma_{j-1})) \subset \ker(\partial^d)$.
Consider the following components of the differential $\partial^{d-1}$,
\[
\xymatrix@!R=2mm{
                                                                                             & P(e(\sigma_{m+1})) \ar@{}[d]|\oplus \\
P(e(\sigma_m)) \ar[ur]^-{\sigma_{m+1}} \ar[r]_-{\sigma_m}  & P(s(\sigma_m)) 
}
\]
which map diagonally into submodules of $P(e(\sigma_{m+1}))$ and $P(s(\sigma_m))$ with simple top $S(e(\sigma_m))$. Thus, in the quotient $\ker(\partial^d)/\im(\partial^{d-1})$ the action of $\sigma_m$ on the basis vector at $s(\sigma_{m+1})$ is the same as the action of $\sigma_{m-1}$ on the basis vector at $s(\sigma_{m-1})$, as indicated in 
Figure~\ref{fig:coh}.
At the left-hand endpoint of $\tau$, i.e. at the homotopy letter $\sigma_j$, in the quotient $\ker(\partial^d)/\im(\partial^{d-1})$ the arrow $b_l$ acts on the basis vector supported at $s(b_l)$ by sending it to $0$ (because the basis vector at $e(b_l)$ is an element of $\im(\partial^{d-1})$. Hence, the arrow $b_l$ is removed from the string describing this indecomposable summand of $H^d(\P_\sigma)$.
Similarly, at the right-hand endpoint of $\tau$, i.e. at $\sigma_i$, the basis vector supported at $s(a_1) = x_{i-1}$ is not an element of $\ker(\partial^d)$. Hence, the arrow $a_1$ is removed from the string describing the indecomposable summand of $H^d(\P_\sigma)$.
\begin{figure}
\scalebox{0.8}{\begin{tikzpicture}
\node (0) at (0,3){$\begin{tikzpicture} 
\node (a) at (0,0){$x_{j}$};
\node (b) at (-1,-1){};
\node (c) at (1,-1){};
\draw[-stealth,
decoration={snake, 
    amplitude = .4mm,
    segment length = 2mm,
    post length=0.9mm},decorate] (a) --  node [anchor=south east,scale=.7] {$\alpha_{j}$}  (b);
\draw[-stealth,
decoration={snake, 
    amplitude = .4mm,
    segment length = 2mm,
    post length=0.9mm},decorate] (a) --  node [anchor=south west,scale=.7] {$\beta_{j}$}  (c);
\end{tikzpicture}$};
\node (1) at (0,0){$\begin{tikzpicture} 
\node (a) at (0,0){$x_{j-2}$};
\node (b) at (-1,-1){};
\node (c) at (1,-1){};
\draw[-stealth,
decoration={snake, 
    amplitude = .4mm,
    segment length = 2mm,
    post length=0.9mm},decorate] (a) --  node [anchor=south east,scale=.7] {$\alpha_{j-2}$}  (b);
\draw[-stealth,
decoration={snake, 
    amplitude = .4mm,
    segment length = 2mm,
    post length=0.9mm},decorate] (a) --  node [anchor=south west,scale=.7] {$\beta_{j-2}$}  (c);
\end{tikzpicture}$};
\node (2) at (6,4){$\begin{tikzpicture} 
\node (a) at (0,0){$x_{j-1}$};
\node (b1) at (-.5,-.5){};
\node (b2) at (-1,-1){};
\node (b3) at (-1.5,-1.5){};
\node (b4) at (-2,-2){$x_{j}$};
\node (b) at (-3,-3){};
\node[circle,fill=black!20,draw, scale=.8](c) at (1,-1){$x_{j-2}$};
\node (d) at (2,-2){};
\draw [line width=35pt,opacity=0.1,black,line cap=round,rounded corners] (b.center) -- (a.center) -- (d.center);
\draw [line width=25pt,opacity=0.15,black,line cap=round,rounded corners] (c.center) -- (d.center);
\draw [line width=25pt,opacity=0.15,black,line cap=round,rounded corners] (b4.center) -- (b.center);
\draw[->] (a) -- node [anchor=south east,scale=.7] {$b_1$} (b1.center);
\draw[dotted] (b1) -- (b2.center);
\draw[->] (b2) -- node [anchor=south east,scale=.6] {$b_{l-1}$} (b3);
\draw[->] (b3.center) -- node [anchor=south east,scale=.7] {$b_{l}$} (b4);
\draw[-stealth,
decoration={snake, 
    amplitude = .4mm,
    segment length = 2mm,
    post length=0.9mm},decorate] (b4) --  node [anchor=south east,scale=.7] {$\alpha_{j}$}  (b);
 \draw[-stealth,
decoration={snake, 
    amplitude = .4mm,
    segment length = 2mm,
    post length=0.9mm},decorate] (a) --  node [anchor=south west,scale=.7] {$\sigma_{j-1}$}  (c);
\draw[-stealth,
decoration={snake, 
    amplitude = .4mm,
    segment length = 2mm,
    post length=0.9mm},decorate]  (c) --  node [anchor=south west,scale=.7] {$\beta_{j-2}$}  (d);
\end{tikzpicture}$};
\node (3) at (6.5,.5){$\begin{tikzpicture} 
\node (a) at (0,0){$x_{j-3}$};
\node[circle,fill=black!20,draw, scale=.8] (b) at (-1,-1){$x_{j-2}$};
\node (c) at (-2,-2){};
\node[circle,fill=black!20,draw, scale=.8] (d) at (1,-1){$x_{j-4}$};
\node (e) at (2,-2){};
\draw [line width=35pt,opacity=0.1,black,line cap=round,rounded corners] (c.center) -- (a.center) -- (e.center);
\draw [line width=25pt,opacity=0.15,black,line cap=round,rounded corners] (b.center) -- (c.center) (d.center) -- (e.center) ;
\draw[-stealth,
decoration={snake, 
    amplitude = .4mm,
    segment length = 2mm,
    post length=0.9mm},decorate] (a) --  node [anchor=south east,scale=.7] {$\sigma_{j-2}$}  (b);
\draw[-stealth,
decoration={snake, 
    amplitude = .4mm,
    segment length = 2mm,
    post length=0.9mm},decorate]  (b) --  node [anchor=south east,scale=.7] {$\alpha_{j-2}$}  (c);
 \draw[-stealth,
decoration={snake, 
    amplitude = .4mm,
    segment length = 2mm,
    post length=0.9mm},decorate] (a) --  node [anchor=south west,scale=.7] {$\sigma_{j-3}$}  (d);
\draw[-stealth,
decoration={snake, 
    amplitude = .4mm,
    segment length = 2mm,
    post length=0.9mm},decorate] (d) --  node [anchor=south west,scale=.7] {$\beta_{j-4}$}  (e);

\end{tikzpicture}$};
\draw[-stealth] (0)--node [anchor=south,scale=.7] {$\sigma_{j}=b_l\dots b_1$} (3,3);
\draw[-stealth] (1)--node [anchor=south,scale=.7] {$\sigma_{j-1}$} (3,2);
\draw[-stealth] (1)--node [anchor=north,scale=.7] {$\sigma_{j-2}$} (3.2,0);
\node at (6.5,3){$\oplus$};
\path[-, ultra thick, black!30] (7.08,4.4) edge [out=210, in=120] (5.2,.8);
\path[-, ultra thick, black!30] (7.12,.3) edge [out=210, in=80] (6.2,-.8);
\path[dotted, ultra thick, black!30] (6.2,-.8) edge [out=90, in=80] (6.2,-1.2);
\end{tikzpicture}}
\quad
\scalebox{0.8}{\begin{tikzpicture}
\node (1) at (0,0){$\begin{tikzpicture} 
\node (a) at (0,0){$x_m$};
\node (b) at (-1,-1){};
\node (c) at (1,-1){};
\draw[-stealth,
decoration={snake, 
    amplitude = .4mm,
    segment length = 2mm,
    post length=0.9mm},decorate] (a) --  node [anchor=south east,scale=.7] {$\alpha_m$}  (b);
\draw[-stealth,
decoration={snake, 
    amplitude = .4mm,
    segment length = 2mm,
    post length=0.9mm},decorate] (a) --  node [anchor=south west,scale=.7] {$\beta_m$}  (c);
\end{tikzpicture}$};
\node (2) at (5,1){$\begin{tikzpicture} 
\node (a) at (0,0){$x_{m+1}$};
\node (b) at (-1,-1){};
\node[circle,fill=black!20,draw, scale=.8](c) at (1,-1){$x_m$};
\node (d) at (2,-2){};
\draw [line width=35pt,opacity=0.1,black,line cap=round,rounded corners] (b.center) -- (a.center) -- (d.center);
\draw [line width=25pt,opacity=0.15,black,line cap=round,rounded corners] (c.center) -- (d.center);
\draw[-stealth,
decoration={snake, 
    amplitude = .4mm,
    segment length = 2mm,
    post length=0.9mm},decorate] (a) --  node [anchor=south east,scale=.7] {$\alpha_{m+1}$}  (b);
 \draw[-stealth,
decoration={snake, 
    amplitude = .4mm,
    segment length = 2mm,
    post length=0.9mm},decorate] (a) --  node [anchor=south west,scale=.7] {$\sigma_{m+1}$}  (c);
\draw[-stealth,
decoration={snake, 
    amplitude = .4mm,
    segment length = 2mm,
    post length=0.9mm},decorate]  (c) --  node [anchor=south west,scale=.7] {$\beta_{m}$}  (d);
\end{tikzpicture}$};
\node (3) at (4,-1.5){$\begin{tikzpicture} 
\node (a) at (0,0){$x_{m-1}$};
\node[circle,fill=black!20,draw, scale=.8] (b) at (-1,-1){$x_m$};
\node (c) at (-2,-2){};
\node (d) at (1,-1){};
\draw [line width=35pt,opacity=0.1,black,line cap=round,rounded corners] (c.center) -- (a.center) -- (d.center);
\draw [line width=25pt,opacity=0.15,black,line cap=round,rounded corners] (b.center) -- (c.center);
\draw[-stealth,
decoration={snake, 
    amplitude = .4mm,
    segment length = 2mm,
    post length=0.9mm},decorate] (a) --  node [anchor=south east,scale=.7] {$\sigma_{m}$}  (b);
\draw[-stealth,
decoration={snake, 
    amplitude = .4mm,
    segment length = 2mm,
    post length=0.9mm},decorate]  (b) --  node [anchor=south east,scale=.7] {$\alpha_{m}$}  (c);
 \draw[-stealth,
decoration={snake, 
    amplitude = .4mm,
    segment length = 2mm,
    post length=0.9mm},decorate] (a) --  node [anchor=south west,scale=.7] {$\beta_{m-1}$}  (d);
\end{tikzpicture}$};
\draw[-stealth] (1)--node [anchor=south,scale=.7] {$\sigma_{m+1}$} (2.7,.8);
\draw[-stealth] (1)--node [anchor=north,scale=.7] {$\sigma_{m}$} (2.7,-.8);
\node at (4.5,0.4){$\oplus$};
\path[-, ultra thick, black!30] (5.2,1) edge [out=180, in=90] (3.2,-1.4);
\end{tikzpicture}} \\
\scalebox{0.8}{\begin{tikzpicture}
\node (1) at (0,0){$\begin{tikzpicture} 
\node (a) at (0,0){$x_{i}$};
\node (b) at (-1,-1){};
\node (c) at (1,-1){};
\draw[-stealth,
decoration={snake, 
    amplitude = .4mm,
    segment length = 2mm,
    post length=0.9mm},decorate] (a) --  node [anchor=south east,scale=.7] {$\alpha_{i}$}  (b);
\draw[-stealth,
decoration={snake, 
    amplitude = .4mm,
    segment length = 2mm,
    post length=0.9mm},decorate] (a) --  node [anchor=south west,scale=.7] {$\beta_{i}$}  (c);
\end{tikzpicture}$};
\node (2) at (6,3){$\begin{tikzpicture} 
\node (a) at (0,0){$x_{i+1}$};
\node (b) at (-1,-1){$x_{i+2}$};
\node (c) at (-2,-2){};
\node[circle,fill=black!20,draw, scale=.8] (d) at (1,-1){$x_{i}$};
\node (e) at (2,-2){};
\draw [line width=35pt,opacity=0.1,black,line cap=round,rounded corners] (c.center) -- (a.center) -- (e.center);
\draw [line width=25pt,opacity=0.15,black,line cap=round,rounded corners] (d.center) -- (e.center) ;
\draw[-stealth,
decoration={snake, 
    amplitude = .4mm,
    segment length = 2mm,
    post length=0.9mm},decorate] (a) --  node [anchor=south east,scale=.7] {$\sigma_{i+2}$}  (b);
\draw[-stealth,
decoration={snake, 
    amplitude = .4mm,
    segment length = 2mm,
    post length=0.9mm},decorate]  (b) --  node [anchor=south east,scale=.7] {$\beta_{i+2}$}  (c);
 \draw[-stealth,
decoration={snake, 
    amplitude = .4mm,
    segment length = 2mm,
    post length=0.9mm},decorate] (a) --  node [anchor=south west,scale=.7] {$\sigma_{i+1}$}  (d);
\draw[-stealth,
decoration={snake, 
    amplitude = .4mm,
    segment length = 2mm,
    post length=0.9mm},decorate] (d) --  node [anchor=south west,scale=.7] {$\beta_{i}$}  (e);

\end{tikzpicture}$};
\node (3) at (4.4,-1)
{$\begin{tikzpicture} 
\node (a) at (0,0){$x_{i-1}$};
\node (b1) at (-.5,-.5){};
\node (b2) at (-1,-1){};
\node (b3) at (-1.5,-1.5){};
\node[circle,fill=black!20,draw, scale=.8] (b4) at (-2,-2){$x_{i}$};
\node (b) at (-3,-3){};
\node (c) at (1,-1){};

\draw [line width=35pt,opacity=0.1,black,line cap=round,rounded corners] (b.center) -- (b2.center);
\draw [line width=25pt,opacity=0.15,black,line cap=round,rounded corners] (b4.center) -- (b.center);
\draw[->] (a) -- node [anchor=south east,scale=.7] {$a_1$} (b1.center);
\draw[->] (b1) -- node [anchor=south east,scale=.6] {$a_2$} (b2.center);
\draw[dotted] (b2) -- (b3);
\draw[->] (b3.center) -- node [anchor=south east,scale=.7] {$a_k$} (b4);
\draw[-stealth,
decoration={snake, 
    amplitude = .4mm,
    segment length = 2mm,
    post length=0.9mm},decorate] (b4) --  node [anchor=south east,scale=.7] {$\alpha_{i}$}  (b);
 \draw[-stealth,
decoration={snake, 
    amplitude = .4mm,
    segment length = 2mm,
    post length=0.9mm},decorate] (a) --  node [anchor=south west,scale=.7] {$\beta_{i-1}$}  (c);
\end{tikzpicture}$};
\node (4) at (10.5,-.5){$\begin{tikzpicture} 
\node (a) at (0,0){$x_{i-2}$};
\node (b) at (-1,-1){};
\node (d) at (1,-1){$x_{i-1}$};
\node (e) at (2,-2){};
\draw[-stealth,
decoration={snake, 
    amplitude = .4mm,
    segment length = 2mm,
    post length=0.9mm},decorate] (a) --  node [anchor=south east,scale=.7] {$\alpha_{i-2}$}  (b);
\draw[-stealth,
decoration={snake, 
    amplitude = .4mm,
    segment length = 2mm,
    post length=0.9mm},decorate] (a) --  node [anchor=south west,scale=.7] {$\sigma_{i-1}$}  (d);
\draw[-stealth,
decoration={snake, 
    amplitude = .4mm,
    segment length = 2mm,
    post length=0.9mm},decorate] (d) --  node [anchor=south west,scale=.7] {$\beta_{i-1}$}  (e);

\end{tikzpicture}$};
\draw[-stealth] (1)--node [anchor=south,scale=.7] {$\sigma_{i+1}$} (3,1.5);
\draw[-stealth] (1)--node [anchor=north,scale=.7] {$\sigma_{i}=a_k\dots a_1$} (3,0);
\draw[-stealth] (7.3,0)--node [anchor=south,scale=.7] {$\sigma_{i-1}$} (8.3,0);
\path[-, ultra thick, black!30] (6.8,2.8) edge [out=250, in=140] (3.4,-1.1);
\end{tikzpicture}}
\caption{Schematic showing the computation of the cohomology: $\ker \partial$ is shown in light grey, $\im \partial$ in mid-grey, and basis vectors identified by quotienting by the image of the diagonal map shown in dark grey and joined by a dark grey line. Here $x_m = e(\sigma_m)$, i.e. is the end of the homotopy letter $\sigma_m$.
Top left: illustration of the situation at the left end of the maximal alternating homotopy string $\tau$. Top right: a generic situation midway in $\tau$. Bottom: the situation at the right hand end of $\tau$.} \label{fig:coh}
\end{figure}
It follows that the summand $M$ of $\ker(\partial^d)/\im(\partial^{d-1})$ has the following form. 
\[ \resizebox{0.75\textwidth}{!}{
\begin{tikzpicture}
\coordinate[circle,label=left:{$t(b_{l-1})$}] (1);
\coordinate[circle,above right=1cm of 1,label=center:{$\quad$}] (2);
\coordinate[circle,above right=1cm of 2,label=center:{$\quad$}] (3);
\coordinate[circle,above right=1cm of 3,label=above:{$x_{j-1}$}] (4);
\coordinate[circle,below right=1.5cm of 4,label=below:{$x_{j-2}$}] (5);
\coordinate[circle,above right=1.5cm of 5,label=above:{$x_{j-3}$}] (6);
\coordinate[circle,below right=1.5cm of 6,label=below:{$x_{j-4}$}] (7);
\coordinate[circle,above right=1cm of 7,label=center:{$\ldots$}] (8);
\coordinate[circle,right=1.5cm of 7,label=below:{$x_{i+3}$}] (9);
\coordinate[circle,above right=1.5cm of 9,label=above:{$x_{i+1}$}] (10);
\coordinate[circle,below right=1.5cm of 10,label=below:{$x_i$}] (11);
\coordinate[circle,above right=1cm of 11,label=center:{$\quad$}] (12);
\coordinate[circle,above right=1cm of 12,label=center:{$\quad$}] (13);
\coordinate[circle,above right=1cm of 13,label=above:{$s(a_2)$}] (14);

\draw[<-]  (1)--node [anchor=south east,scale=.9]{$b_{l-1}$}(2); 
\draw[densely dotted,<-] (2)--node [anchor=south east,scale=.9]{}(3); 
\draw[<-] (3)--node [anchor=south east,scale=.9]{$b_1$}(4); 

\draw[<-]  (11)--node [anchor=north west,scale=.9]{$a_{k}$}(12); 
\draw[densely dotted,<-] (12)--node [anchor=north west,scale=.9]{}(13); 
\draw[<-] (13)--node [anchor=north west,scale=.9]{$a_2$}(14); 

\draw[->,decorate,decoration={snake,amplitude=.4mm,segment length=2mm}] 
(4)-- node [anchor=north east,scale=.9]{$\sigma_{j-1}$} (5);
\draw[->,decorate,decoration={snake,amplitude=.4mm,segment length=2mm}] 
(6)-- node [anchor=south east,scale=.9]{$\sigma_{j-2}$} (5);
\draw[->,decorate,decoration={snake,amplitude=.4mm,segment length=2mm}] 
(6)-- node [anchor=south west,scale=.9]{$\sigma_{j-3}$} (7);
\draw[->,decorate,decoration={snake,amplitude=.4mm,segment length=2mm}] 
(10)-- node [anchor=south east,scale=.9]{$\sigma_{i+2}$} (9);
\draw[->,decorate,decoration={snake,amplitude=.4mm,segment length=2mm}] 
(10)-- node [anchor=south west,scale=.9]{$\sigma_{i+1}$} (11);
\end{tikzpicture}}
\]
that is, corresponds to the string
$w = b_{l-1} \cdots b_1 \sigma_{j-1} \cdots \sigma_{i+1} a_k \cdots a_2$.
\end{proof}

The following lemmas are computations analogous to that in Lemma~\ref{lem:max-alternating} above. Thus we provide only their statements and leave the proofs to the reader.

\begin{lemma}[Cokernel rule] \label{lem:cokernel}
Let $\sigma = \cdots \sigma_2 \sigma_1$ be a homotopy string in which $\sigma_1 = a_k \cdots a_1$ and $\sigma_2$ are direct homotopy letters.
If there exists $c$ with $c \in \overline{Q}_1$ such that $\sigma_1 c$ is defined as a string, then take $u = c_m \cdots c_1$ to be the maximal inverse string ending with $c_m = c$. Set
\[
w \coloneqq 
\left \{
\begin{array}{ll}
a_{k-1} \cdots a_1 u & \text{if there is such a } c; \\
a_{k-1} \cdots a_1 & \text{otherwise}.
\end{array}
\right.
\]
Then the string module $M(w)$ is an indecomposable summand of the cohomology module $H^d(\P_\sigma)$, where $d = \deg P(s(\sigma_1))$.
\end{lemma}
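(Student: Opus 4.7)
The plan is to mimic the cokernel-style calculation carried out in the proof of Lemma~\ref{lem:max-alternating}, adapted to the right-hand boundary of the homotopy string $\sigma$. Since $\sigma_1 = a_k \cdots a_1$ is a direct homotopy letter at the right end of $\sigma$, the conventions governing string complexes place $P(s(\sigma_1))$ in the maximal cohomological degree $d = \deg P(s(\sigma_1))$ of $\P_\sigma$, and the only component of $\partial^{d-1}$ landing in it is the map $P(e(\sigma_1)) \to P(s(\sigma_1))$ given by left multiplication by the path $a_k \cdots a_1$. The reason no further components contribute is that $s(\sigma_1)$ is the terminal vertex of the walk $\sigma$ and so does not occur elsewhere. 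The relevant summand of $H^d(\P_\sigma)$ is therefore the cokernel $P(s(\sigma_1)) \big/ \langle a_k \cdots a_1 \rangle$.

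Next I would describe $P(s(\sigma_1))$ as a string module. By the gentle algebra axioms, it has top $S(s(\sigma_1))$ and at most two maximal direct branches descending from $s(\sigma_1)$: one beginning with $a_1$ (the maximal direct extension of $\sigma_1$ permitted by $(Q,I)$) and, when it exists, a second branch beginning with the unique other arrow $d$ leaving $s(\sigma_1)$. The existence of this second branch is precisely the existence of the inverse arrow $c = \bar d$ with $\sigma_1 c$ a string, and the formal inverse of the second branch agrees with the maximal inverse string $u = c_m \cdots c_1$ ending in $c_m = c$ from the statement.

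The quotient is then immediate. The submodule $\langle a_k \cdots a_1 \rangle \subseteq P(s(\sigma_1))$ consists of all vectors $p \cdot a_k \cdots a_1$ for paths $p$ based at $e(a_k)$; by the gentle axioms, such $p$ all extend $a_k$ along the first branch, so the submodule is precisely the tail of that first branch from position $k$ onwards. The quotient thus retains the top $e_{s(\sigma_1)}$, the truncated first branch $a_1, a_2 a_1, \ldots, a_{k-1} \cdots a_1$, and, when present, the entire second branch. Reading this data back as a string, we obtain precisely $M(w)$.

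The principal obstacle is bookkeeping: matching the direction of each differential component to the orientation of the corresponding homotopy letter in the unfolded diagram, and checking that the second branch of $P(s(\sigma_1))$ coincides with the formal inverse of the maximal inverse string $u$ in the statement. Both reduce to the uniqueness of direct extensions at each vertex guaranteed by the gentle axioms, so the argument follows the blueprint of Lemma~\ref{lem:max-alternating} without any new ingredient.
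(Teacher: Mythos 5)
Your argument is exactly the endpoint analogue of the computation in the proof of Lemma~\ref{lem:max-alternating}, which is all the paper intends: it omits the proof of the cokernel rule, declaring it "analogous" and leaving it to the reader, and your identification of the endpoint contribution as $P(s(\sigma_1))/\langle a_k\cdots a_1\rangle$ together with the two-branch description of $P(s(\sigma_1))$ is that analogue, so the proposal is essentially correct. Two points deserve tightening. First, the component of $\partial^{d-1}$ is the map sending the generator of $P(e(\sigma_1))$ to the path $a_k\cdots a_1$ (right multiplication), and the reason no other component interferes at that position is not that the vertex $s(\sigma_1)$ "does not occur elsewhere" (vertices may well recur along $\sigma$; other occurrences simply index different projective summands) but that the endpoint of the unfolded diagram has a single incident homotopy letter. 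Second, the step "the relevant summand of $H^d$ is therefore the cokernel" tacitly uses that $\sigma_2$ is direct or empty: if $\sigma_2$ is inverse, then $\partial^{d-1}$ has a second component out of $P(e(\sigma_1))$, its image sits diagonally in two degree-$d$ summands, and the resulting summand of $H^d$ is the longer string produced by the combined rule (Lemma~\ref{lem:combined}) rather than $M(w)$; this caveat is equally implicit in the statement itself and is resolved in the paper by the discussion preceding Lemma~\ref{lem:combined}, but your proof should either assume $\sigma_2$ direct or empty, or point to that rule for the excluded configuration.
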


For a homotopy string $\sigma = \cdots \sigma_2 \sigma_1$ with $\sigma_1$ direct, it is possible that $\sigma_2$ is not direct, in which case $\sigma_1$ is part of a maximal alternating homotopy substring $\tau = \sigma_j \cdots \sigma_1$. 
In this case, we combine the cokernel rule with the maximal alternating homotopy substring rule.

\begin{lemma}[Combined rule] \label{lem:combined}
Let $\sigma = \cdots \sigma_2 \sigma_1$ be a homotopy string in which $\sigma_1 = a_k \cdots a_1$ is a direct homotopy letter and $\tau = \sigma_j \cdots \sigma_1$ is a maximal alternating homotopy substring. Decompose the homotopy letter $\sigma_j = b_l \cdots b_1$ into a path or inverse path in $(Q,I)$ and set
\[
w \coloneqq 
\left \{
\begin{array}{ll}
b_{l-1} \cdots b_1 \sigma_{j-1} \cdots \sigma_1 u & \text{if there exist $c$ and $u$ as in Lemma~\ref{lem:cokernel};} \\
b_{l-1} \cdots b_1 \sigma_{j-1} \cdots \sigma_2 a_k \cdots a_1 & \text{otherwise.}
\end{array}
\right.
\]
Then the string module $M(w)$ is an indecomposable summand of the cohomology module $H^d(\P_\sigma)$, where $d = \deg P(s(\sigma_1))$.
\end{lemma}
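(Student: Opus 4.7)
The proof is a straightforward amalgamation of the computations in Lemma~\ref{lem:max-alternating} and Lemma~\ref{lem:cokernel}. The plan is to set $d = \deg P(s(\sigma_1))$ and to identify $M(w)$ as the summand of $H^d(\P_\sigma)$ coming from the block of $\P_\sigma$ determined by the homotopy letters $\sigma_j, \ldots, \sigma_1$ together with its right-hand extension by $u$ (when $u$ exists). As in Lemma~\ref{lem:max-alternating}, the maximality of $\tau$ ensures that $\sigma_{j+1}$, if it exists, has the same orientation as $\sigma_j$, which disconnects the block of components of $\partial^{d-1}$ coming from $\sigma_j, \sigma_{j-1}, \ldots, \sigma_1$ from the rest of $\P_\sigma$ on the left.

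For the middle and left endpoint of $\tau$, the argument of Lemma~\ref{lem:max-alternating} applies essentially verbatim: the direct path $\sigma_j = b_l \cdots b_1$ contributes $b_{l-1} \cdots b_1$ to $w$ (with $b_l$ absorbed into $\im \partial^{d-1}$ via the adjacent component $\sigma_{j+1}$), and the alternating interior contributes $\sigma_{j-1} \cdots \sigma_2$ through the identifications in $\ker \partial^d / \im \partial^{d-1}$ illustrated in Figure~\ref{fig:coh}. For the right endpoint, the difference from Lemma~\ref{lem:max-alternating} is that $\sigma_1$ is the \emph{rightmost} homotopy letter of $\sigma$, so there is no $\sigma_0$ to absorb any portion of $\sigma_1$ into $\im \partial^{d-1}$. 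Hence the full path $a_k \cdots a_1$ contributes to $w$, rather than $a_k \cdots a_2$ as in Lemma~\ref{lem:max-alternating}.

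The remaining ingredient is the case distinction inherited from Lemma~\ref{lem:cokernel}: by the gentle conditions, a second arrow starting at the vertex $s(\sigma_1)$ distinct from $a_1$ exists if and only if there is $c \in \overline{Q}_1$ with $\sigma_1 c$ a string. When such $c$ exists, tracing the radical of $P(s(\sigma_1))$ along the maximal inverse extension $u = c_m \cdots c_1$ exhibits a chain of basis vectors in $\ker \partial^d / \im \partial^{d-1}$ that glues to give the tail $u$ appended to the right of $w$; when no such $c$ exists, the contribution stops at $a_k \cdots a_1$. The main technical point is to verify that in the first case the entire inverse string $u$ is realised in $H^d(\P_\sigma)$ without additional identifications coming from other parts of the complex, which is precisely the computation carried out in the proof of Lemma~\ref{lem:cokernel}, here applied at the right endpoint of the alternating block from Lemma~\ref{lem:max-alternating}.
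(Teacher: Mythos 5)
Your proposal is correct and follows the same route the paper intends: the lemma is left as a computation analogous to Lemma~\ref{lem:max-alternating}, and you carry it out by splicing the cokernel computation of Lemma~\ref{lem:cokernel} onto the right endpoint of the alternating block, which is exactly what is needed (left endpoint and interior gluing as in Lemma~\ref{lem:max-alternating}, full $a_k\cdots a_1$ plus the inverse tail $u$ at the right end). One small imprecision, harmless for your conclusion: in Lemma~\ref{lem:max-alternating} the arrow $a_1$ is dropped because the same-orientation letter $\sigma_{i-1}$ is a component of $\partial^{d}$ out of $P(s(\sigma_i))$, so only $\ker(\sigma_{i-1})$ contributes -- a kernel restriction rather than absorption into $\im(\partial^{d-1})$ -- and in the combined rule it is the absence of $\sigma_0$ that removes this restriction, so the top of $P(s(\sigma_1))$ and the second branch $u$ survive modulo $\im(\sigma_1)$, exactly as your final paragraph concludes.
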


There are obvious dual statements if $\sigma = \sigma_n \sigma_{n-1} \cdots$ with $\sigma_n$ inverse. If $\tau = \sigma$ with $\sigma_n$ inverse and $\sigma_1$ direct then we must combine Lemma~\ref{lem:combined} with its dual.

\begin{lemma}[Kernel rule] \label{lem:kernel}
Let $\sigma = \sigma_n \sigma_{n-1} \cdots$ be a homotopy string in which $\sigma_n = b_l \cdots b_1$ is a direct homotopy letter.
If there exists $c \in Q_1$ and $c b_l = 0$  then take $c_m \cdots c_1$ to be the maximal direct string starting with $c_1 = c$. Set
\[
v \coloneqq 
\left \{
\begin{array}{ll}
c_m \cdots c_2 & \text{if there exists such a $c$;} \\
\emptyset & \text{otherwise.}
\end{array}
\right .
\]
Then the string module $M(v)$ is an indecomposable summand of the cohomology module $H^d(\P_\sigma)$, where $d=\deg P(e(\sigma_n))$. If $m =1$ then $v = 1_{e(c)}$ is the trivial string corresponding to the simple module $S(e(c))$.
\end{lemma}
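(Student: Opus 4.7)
The plan is to compute the degree-$d$ cohomology of $\P_\sigma$, where $d = \deg P(e(\sigma_n))$, by isolating the contribution of the boundary projective $P(e(\sigma_n))$, in direct analogy with the proofs of Lemmas~\ref{lem:max-alternating} and~\ref{lem:cokernel}. Indeed, this rule is the left-hand dual of the Cokernel rule: here $\sigma_n$ is direct and lies at the \emph{left} end of $\sigma$ rather than the right.

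First, I would identify the relevant component of the differential. Because $\sigma_n = b_l \cdots b_1$ is direct and leftmost, the unique component of the differential of $\P_\sigma$ incident to $P(e(\sigma_n))$ is right multiplication by the path $b_l \cdots b_1$, yielding a map $P(e(\sigma_n)) \to P(s(\sigma_n))$. Since there is no $\sigma_{n+1}$, no other component of the differential enters $P(e(\sigma_n))$. Thus the summand of $H^d(\P_\sigma)$ coming from this boundary is precisely the kernel of this multiplication restricted to the $P(e(\sigma_n))$-summand.

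Next, I would compute this kernel using the gentle structure of $\Lambda$. An element $p \in P(e(\sigma_n))$ is killed by right multiplication by $b_l \cdots b_1$ exactly when the rightmost arrow of $p$ (in the right-to-left reading) is some $\alpha$ with $\alpha b_l \in I$; this is because $b_l \cdots b_1$ itself contains no internal subpath in $I$, so any vanishing of $p b_l \cdots b_1$ must come from a length-two relation at the junction. By the gentleness condition on length-two relations, there is at most one such $\alpha$. If no such arrow exists, the kernel is zero, giving the empty contribution $M(\emptyset) = 0$. If $c$ exists with $cb_l \in I$, the kernel is the cyclic submodule $\Lambda c \subseteq P(e(\sigma_n))$.

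Finally, I would identify $\Lambda c$ as a string module. By gentleness, $c$ admits a unique maximal direct extension $c_m \cdots c_1$ with $c_1 = c$: at each vertex there is at most one arrow continuing the path without producing a relation. Hence $\Lambda c$ is uniserial with composition factors $S(e(c_1)), \ldots, S(e(c_m))$, matching exactly the string module $M(c_m \cdots c_2)$ (interpreted as $M(1_{e(c)}) = S(e(c))$ in the edge case $m = 1$). This identifies the contribution to $H^d(\P_\sigma)$ as the desired $M(v)$. The main bookkeeping subtlety is the correct indexing of $v$ — in particular, why the letter $c_1$ is dropped — which reflects the fact that the top of $\Lambda c$ sits at the vertex $e(c_1) = s(c_2)$ rather than at $e(\sigma_n)$; the rest of the argument is a routine analogue of the proof of Lemma~\ref{lem:max-alternating}.
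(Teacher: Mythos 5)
The paper deliberately leaves this lemma unproved, describing it as a computation analogous to Lemma~\ref{lem:max-alternating}, and your argument is exactly that intended computation, carried out correctly: the endpoint projective $P(e(\sigma_n))$ has a single incident differential component (right multiplication by $b_l\cdots b_1$), no component of $\partial^{d-1}$ maps into it, its kernel is $\Lambda c$ because a vanishing of $p\,b_l\cdots b_1$ can only come from a length-two relation $cb_l\in I$ at the junction (unique by gentleness), and $\Lambda c$ is uniserial, isomorphic to $M(c_m\cdots c_2)$, degenerating to $S(e(c))$ when $m=1$. Your bookkeeping of why $c_1$ is dropped (the top of $\Lambda c$ sits at $e(c_1)=s(c_2)$) is also the right point to highlight, so I see no gap.
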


Note that if $\sigma = \sigma_n \sigma_{n-1} \cdots$ is a homotopy string and $\tau = \sigma_n \cdots \sigma_i$ is a maximal alternating homotopy substring with $\sigma_n$ direct, then Lemmas~\ref{lem:max-alternating} and \ref{lem:kernel} do not need to be combined. In particular, the string module $M(v)$ is an indecomposable summand of $H^d(\P_\sigma)$ and the string module $M(w)$ is an indecomposable summand of $H^{d+1}(\P_\sigma)$, where $v$ is defined as in Lemma~\ref{lem:kernel}, $w$ is defined as in Lemma~\ref{lem:max-alternating}, and $d = \deg P(e(\sigma_n))$.

\begin{lemma}[Nontrivial homotopy letter rule] \label{lem:other-case}
Let $\sigma$ be a homotopy string or band in which $\sigma_i$ is a direct homotopy letter and $\sigma_{i+1} \sigma_i \sigma_{i-1}$ is a non-alternating homotopy substring with $\sigma_{i+1}$ possibly empty. Let $d = \deg P(s(\sigma_i))$.
\begin{enumerate}
\item If $\sigma_i = a_k \cdots a_1$ with $a_j \in Q_1$ and $k > 1$ then set $w = a_{k-1} \cdots a_2$. The string module $M(w)$ is an indecomposable summand of the cohomology module $H^d(\Q_\sigma)$. If $k= 2$ then $w = 1_{e(a_1)} = 1_{s(a_2)}$ and $M(w) = S(e(a_1)) = S(s(a_2))$.
\item If $\sigma_i = a$ for some $a \in Q_1$ then the map $\sigma_i \colon P(e(\sigma_i)) \to P(s(\sigma_i))$ contributes the zero submodule to the cohomology module of $H^d(\Q_\sigma)$.
\end{enumerate}
\end{lemma}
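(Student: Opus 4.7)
The plan is to mirror the local-computation strategy of Lemma~\ref{lem:max-alternating}. Setting $d = \deg P(s(\sigma_i))$, the direct homotopy letter $\sigma_i$ is a component of $\partial^{d-1}$ giving a map $P(e(\sigma_i)) \to P(s(\sigma_i))$ whose image is the uniserial submodule of $P(s(\sigma_i))$ generated by the path $a_k \cdots a_1$. The hypothesis that $\sigma_{i+1}\sigma_i\sigma_{i-1}$ is not alternating forces at least one of $\sigma_{i-1}$, $\sigma_{i+1}$ to be direct; equivalently, at least one of the compositions $\sigma_i\sigma_{i-1}$ or $\sigma_{i+1}\sigma_i$ passes through a relation. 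This is precisely the condition that decouples the local contribution of $\sigma_i$ to $H^d(\Q_\sigma)$ from any contribution that would be absorbed into a larger alternating substring (and hence treated by Lemma~\ref{lem:max-alternating} or its combined variants).

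For part~(1), I would wrap up the relevant portion of $\Q_\sigma$ centred on $P(s(\sigma_i))$ and focus on the uniserial ``$a_1$-branch'' $N \subset P(s(\sigma_i))$ spanned by basis paths beginning with the arrow $a_1$. Inside $N$, the image of $\partial^{d-1}$ is exactly the further uniserial submodule generated by $a_k \cdots a_1$. On the other hand, each proper initial subpath $a_l \cdots a_1$ with $1 \leq l \leq k-1$ lies in $\ker(\partial^d)$: the only outgoing components of $\partial^d$ from $P(s(\sigma_i))$ come from direct neighbouring letters such as $\sigma_{i-1} = c_m \cdots c_1$, and the junction relation $a_1 c_m \in I$ (forced by the non-alternating hypothesis on that side) guarantees $a_l \cdots a_1 \cdot c_m \cdots c_1 = 0$; a symmetric argument applies when it is $\sigma_{i+1}$ that is direct. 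Passing to $N \cap \ker(\partial^d)$ modulo $N \cap \im(\partial^{d-1})$ yields a uniserial summand of $H^d(\Q_\sigma)$ with composition factors $S(s(a_2)), \ldots, S(s(a_k))$, which is exactly the string module $M(w)$ for $w = a_{k-1}\cdots a_2$; when $k = 2$ this is the simple $S(e(a_1)) = S(s(a_2))$.

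For part~(2), with $\sigma_i = a$ a single arrow, the ``$a$-branch'' of $P(s(a))$ coincides as a submodule with $\im(\sigma_i) = \Lambda \cdot a \subset P(s(a))$. Consequently $N \cap \ker(\partial^d) \subseteq \im(\partial^{d-1})$ on this branch, and no summand of $H^d(\Q_\sigma)$ arises at this spot.

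The principal difficulty is bookkeeping: one must verify, across the configurations of $(\sigma_{i-1},\sigma_{i+1})$ allowed by the hypothesis (both direct; one direct and one inverse; $\sigma_{i+1} = \emptyset$ with $\sigma_{i-1}$ direct; and the case $k = 1$ in~(2) with its own sub-configurations), that the only component of $\partial^{d-1}$ contributing to the $a_1$-branch is $\sigma_i$ itself, and that $\ker(\partial^d)$ restricted to this branch is exactly the span of the proper initial subpaths. Both facts follow formally from the defining axioms of a gentle algebra, namely the uniqueness of non-relation continuations at each vertex, combined with the placement of relations enforced by the failure of $\sigma_{i+1}\sigma_i\sigma_{i-1}$ to be alternating.
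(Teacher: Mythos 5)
Your strategy is the one the paper intends: it gives no proof of this lemma, saying only that it is an ``analogous computation'' to Lemma~\ref{lem:max-alternating}, and your local computation at the node $P(s(\sigma_i))$ is exactly that computation in the configuration where $\sigma_{i-1}$ is direct and $\sigma_{i+1}$ is direct or empty. There the argument is right (modulo one slip of phrasing): since $a_1c_m\in I$ at the junction with $\sigma_{i-1}=c_m\cdots c_1$, the kernel of the unique outgoing component of $\partial^d$ at $P(s(\sigma_i))$ is the \emph{whole} uniserial branch $\Lambda a_1$ -- every path beginning with $a_1$ is annihilated, not only the proper initial subpaths as you assert in your last paragraph (harmless, since the longer paths lie in $\im\partial^{d-1}$); the image of $\partial^{d-1}$ inside this node is $\Lambda(a_k\cdots a_1)$ with no diagonal gluing through $P(e(\sigma_i))$, because when $\sigma_{i+1}$ is direct or empty that node has no second outgoing component; and the quotient is $M(a_{k-1}\cdots a_2)$, respectively zero when $k=1$.

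The genuine gap is your mixed case ``one direct and one inverse''. If $\sigma_{i-1}$ is inverse, then \emph{both} letters adjacent to $P(s(\sigma_i))$ are components of $\partial^{d-1}$ into that node: there is no outgoing component of $\partial^d$ there at all, so the whole projective, including its top, lies in the kernel, and the contribution is a quotient of $P(s(\sigma_i))$ by two images -- not $M(a_{k-1}\cdots a_2)$. In particular there is no ``symmetric argument when it is $\sigma_{i+1}$ that is direct'': $\sigma_{i+1}$ is not even adjacent to $P(s(\sigma_i))$. If instead $\sigma_{i+1}$ is inverse (and $\sigma_{i-1}$ direct), then $P(e(\sigma_i))$ maps diagonally into two degree-$d$ nodes, so in the quotient $a_k\cdots a_1$ is not killed but identified with a nonzero element of the neighbouring node; the summand produced is the longer string of Lemma~\ref{lem:max-alternating} (with $\sigma_i$ an end letter of a maximal alternating substring), of which $a_k\cdots a_2$, resp.\ $a_{k-1}\cdots a_1\sigma_{i-1}\cdots$, is only a segment. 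The asserted conclusion is in fact false in these configurations: take $Q$ with arrows $a_1\colon 1\to 2$, $a_2\colon 2\to 3$, $b\colon 3\to 4$, $c\colon 1\to 5$ and the single relation $ba_2\in I$, and $\sigma=b\,(a_2a_1)\,\bar{c}$; here $\sigma_3\sigma_2\sigma_1$ is not alternating and $k=2$, yet $H^{d}(\P_\sigma)\cong M(a_1)$ (with $d=\deg P(s(\sigma_2))$) is indecomposable, so $S(2)=M(1_{e(a_1)})$ is not a summand. So the lemma must be read (as it is used in the paper and in its example) with $\sigma_i$ belonging to no alternating substring, i.e.\ $\sigma_{i-1}$ direct and $\sigma_{i+1}$ direct or empty, the mixed configurations being absorbed into Lemma~\ref{lem:max-alternating}; as written, the ``one direct, one inverse'' branch of your case analysis does not go through and should be deleted or replaced by a reduction to that lemma.
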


Lemmas~\ref{lem:cokernel}, \ref{lem:combined}, \ref{lem:kernel} and \ref{lem:other-case} admit obvious dual statements. When referring to these lemmas we shall freely include those dual statements. We summarise this section with the following theorem and illustrate with an example.

\begin{theorem} \label{thm:cohomology}
Let $\sigma$ be a homotopy string or band. Lemmas~\ref{lem:max-alternating}, \ref{lem:cokernel}, \ref{lem:combined}, \ref{lem:kernel} and \ref{lem:other-case} and their duals provide a complete description of the cohomology complex $H^\bullet(\Q_\sigma)$.
\end{theorem}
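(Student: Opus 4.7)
The plan is to show that the five lemmas (together with their duals) form an exhaustive and mutually exclusive case analysis for each homotopy letter of $\sigma$, and that the resulting summands of $\ker(\partial^d)/\im(\partial^{d-1})$ are genuinely independent, so that their direct sum recovers $H^d(\Q_\sigma)$ in every degree $d$.

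First I would fix $d$ and recall that $\Q_\sigma^d$ is the direct sum $\bigoplus_{x} P(x)$ ranging over those endpoints $x = s(\sigma_i)$ or $x = e(\sigma_i)$ whose attached projective sits in cohomological degree $d$. The differentials $\partial^{d-1}$ and $\partial^d$ are direct sums of the maps induced by the individual homotopy letters $\sigma_i$. Thus the whole computation localises: a summand $P(x)$ of $\Q_\sigma^d$ communicates only with those neighbouring summands $P(y)$ that are linked by a single homotopy letter. This reduces the computation of $H^d(\Q_\sigma)$ to examining the small piece of $\sigma$ consisting of each $\sigma_i$ adjacent to an endpoint in degree $d$, together with the immediate neighbours $\sigma_{i-1}, \sigma_{i+1}$ that govern whether the resulting kernel/cokernel enlarges further.

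Next I would run the case split according to the \emph{local} shape of $\sigma$ near each homotopy letter $\sigma_i$, using the sign convention that $\sigma_i$ is direct (the inverse case is dual):
\begin{enumerate}
\item If $\sigma_{i-1}$ and $\sigma_{i+1}$ are both inverse (or nonexistent with $\sigma_i$ at a finite end), then $\sigma_i$ is the central letter of a maximal alternating substring $\tau = \sigma_j \cdots \sigma_k$; Lemma~\ref{lem:max-alternating} applies.
\item If $\sigma_i = \sigma_1$ is the rightmost homotopy letter of a one-sided homotopy string and $\sigma_2$ is direct, Lemma~\ref{lem:cokernel} applies; dually Lemma~\ref{lem:kernel} handles the left end. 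If instead $\sigma_1$ is direct but sits at the end of an alternating tail, Lemma~\ref{lem:combined} applies.
\item Finally, if $\sigma_i$ is direct of length at least two and $\sigma_{i\pm1}$ are also direct (so $\sigma_i$ is not embedded in any alternating triple), the interior letters of $\sigma_i$ contribute by Lemma~\ref{lem:other-case}(1), and a length-one direct letter between two direct neighbours contributes the zero submodule by Lemma~\ref{lem:other-case}(2).
\end{enumerate}
I would then observe that these cases partition the set of homotopy letters once we agree that each maximal alternating substring is treated as a single block. Exhaustiveness is a straightforward enumeration over (direct/inverse/empty) neighbours of $\sigma_i$; mutual exclusion follows from the maximality conditions built into the statements of the lemmas.

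For the independence, the key technical point is that the local computation made in the proof of Lemma~\ref{lem:max-alternating} (and analogously of the others) identifies a summand $M(w)$ of $\ker(\partial^d)/\im(\partial^{d-1})$ whose generators all lie in projectives $P(x)$ attached to the fixed block, and whose relations only involve the differentials internal to that block. Because $\sigma$ has no two consecutive homotopy letters that ``pass through a relation'' within such a block, and because any two distinct blocks are separated by either a letter that contributes zero (Lemma~\ref{lem:other-case}(2)) or an endpoint in a different cohomological degree, there is no non-trivial cycle mixing two blocks. Hence $H^d(\Q_\sigma)$ decomposes as the direct sum of the indecomposable summands listed by the applicable lemmas. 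For a homotopy band $\sigma$, the same argument applies after noting that the only global (``wrap-around'') issue arises exactly in the case $\tau = \sigma$, which is precisely the band case treated in the second clause of Lemma~\ref{lem:max-alternating}.

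The main obstacle I anticipate is bookkeeping at the boundaries between blocks, particularly when $\sigma$ is infinite: one must check that the antipath condition guaranteeing finiteness of each maximal alternating block still lets every $P(x)$ in degree $d$ be accounted for, and that at an infinite end the relevant kernel or cokernel stabilises so that the description via Lemmas~\ref{lem:cokernel}--\ref{lem:kernel} genuinely computes the cohomology rather than a finite approximation.
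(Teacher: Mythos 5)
The paper states this theorem without a separate proof: it is offered as a summary of Lemmas~\ref{lem:max-alternating}--\ref{lem:other-case}, the remark on rule precedence and the worked example. So your overall strategy (localise the computation of $H^d$ to the letters incident with projectives in degree $d$, treat each maximal alternating substring as a block, and check the rules account for every node) is exactly the argument the paper leaves implicit, and the localisation and gluing parts of your sketch mirror the ``disconnected components'' observation in the proof of Lemma~\ref{lem:max-alternating}.

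The genuine gap is in your exhaustiveness/exclusivity bookkeeping. First, mutual exclusion does \emph{not} ``follow from the maximality conditions built into the statements of the lemmas'': the hypotheses of Lemma~\ref{lem:max-alternating} are still satisfied when the maximal alternating substring reaches an end of $\sigma$ with $\sigma_1$ direct (dually, $\sigma_n$ inverse), because the definition of maximal alternating substring allows $\sigma_{i-1}$, respectively $\sigma_{j+1}$, to be $\emptyset$; and in that configuration Lemma~\ref{lem:max-alternating} applied verbatim gives a \emph{wrong} answer, not merely a redundant one. For instance, take the gentle algebra $1 \xrightarrow{a} 2 \xrightarrow{b} 3 \xrightarrow{c} 4 \xleftarrow{f} 6$ with $cb \in I$ and the homotopy string $\sigma = \bar{f}\,c\,(ba)$: the substring $\bar{f}c$ is maximal alternating, and Lemma~\ref{lem:max-alternating} predicts at most a simple summand in the degree containing $P(3)\oplus P(6)$, whereas the cohomology in that degree is the two-dimensional indecomposable module $M(f)$, produced only by the dual of the combined rule. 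So the precedence of Lemma~\ref{lem:combined} and its dual has to be imposed, as the paper does in the remark following the theorem; your appeal to built-in maximality does not deliver it, and you state the precedence only at the right end.

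Second, a partition assigning each homotopy letter to exactly one rule is too coarse. If $\sigma_n$ is direct and is the top letter of a maximal alternating substring, then \emph{both} Lemma~\ref{lem:kernel} (contributing in degree $\deg P(e(\sigma_n))$) and Lemma~\ref{lem:max-alternating} (contributing in the next degree) must be applied; the paper says this explicitly in the paragraph between Lemmas~\ref{lem:kernel} and~\ref{lem:other-case}. Your scheme absorbs $\sigma_n$ into the block and only invokes the kernel rule when the neighbouring letter is direct, so it drops that kernel summand. Relatedly, Lemma~\ref{lem:kernel} is not the dual of Lemma~\ref{lem:cokernel}: it concerns $\sigma_n$ direct, while the dual of Lemma~\ref{lem:cokernel} concerns $\sigma_n$ inverse. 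The correct accounting is per node of the unfolded diagram (equivalently, per incident endpoint of each homotopy letter, degree by degree), not per homotopy letter; with that change, and with the combined-rule precedence imposed explicitly, your argument goes through.
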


\begin{remark}
Note that in computing the cohomology Lemmas~\ref{lem:max-alternating}, \ref{lem:cokernel}, \ref{lem:combined}, \ref{lem:kernel}, \ref{lem:other-case} and their duals can be applied independently and therefore in any order. The only exception is that the combined rule Lemma~\ref{lem:combined} should always be applied instead of Lemma~\ref{lem:max-alternating} whenever the homotopy string has the appropriate form.
\end{remark}

\begin{example}
We consider the gentle algebra with the following quiver where the (length 2) relations are indicated by dotted lines. 

\[
\begin{tikzpicture}[scale=1]
\node (A14) at (.8,1.8){$14$};
\node (A13) at (2,1.8){$13$};
\node (A12) at (3.2,1.8){$12$};
\node (A11) at (4.4,1.8){$11$};

\node (A4) at (9.6,2) {$4$};
\node (A8) at (8.2,2) {$8$};
\node (A7) at (6.8,2) {$7$};
\node (A9) at (5.4,2.5) {$9$};
\node (A10) at (4.4,3) {$10$};

\node (A3) at (8.2,.5) {$3$};
\node (A2) at (6.8,.5) {$2$};

\node (A1) at (5.8,1.2) {$1$};

\node (A5) at (9.2,3) {$5$};

\node (A6) at (7.4,-.5) {$6$};

\draw[->] (A6) -- node[pos=.2, rotate=0,right] {$g$} (A3);
\draw[->] (A3) -- node[pos=.5, rotate=0,above] {$b$} (A2);
\draw[->] (A2) -- node[pos=.7, rotate=0,left] {$f$} (A6);
\draw[->] (A4) -- node[pos=.5, rotate=0,right] {$c$} (A3);
\draw[->] (A4) -- node[pos=.5, rotate=0,right] {$d$} (A5);
\draw[->] (A5) -- node[pos=.5, rotate=0,left] {$l$} (A8);
\draw[->] (A8) -- node[pos=.5, rotate=0,above] {$i$} (A7);
\draw[->] (A7) -- node[pos=.5, rotate=0,above] {$m$} (A9);
\draw[->] (A9) -- node[pos=.5, rotate=0,above] {$n$} (A10);
\draw[->] (A7) -- node[pos=.2, rotate=0,left] {$j$} (A1);
\draw[->] (A2) -- node[pos=.5, rotate=0,right] {$h$} (A7);
\draw[->] (A1) -- node[pos=.5, rotate=0,left] {$a$} (A2);
\draw[->] (A8) -- node[pos=.5, rotate=0,below] {$k$} (A4);

\draw[->] (A9) -- node[pos=.5, rotate=0,below] {$o$} (A11);
\draw[->] (A11) -- node[pos=.5, rotate=0,above] {$p$} (A12);
\draw[->] (A12) -- node[pos=.5, rotate=0,above] {$q$} (A13);
\draw[->] (A13) -- node[pos=.5, rotate=0,above] {$r$} (A14);

\draw[thick,dotted] (2.3,1.8) arc (0:175:.3cm)
(7.1,2) arc (0:160:.3cm) 
(5.2,2.3) arc (230:330:.3cm)
(8.7,2) arc (0:70:.3cm)
(9.4,2.3) arc (100:180:.3cm)
(8.9,2.7) arc (220:310:.3cm)
(6.2,1) arc (330:420:.3cm)
(6.8,1) arc (90:170:.3cm)
(6.5,1.7) arc (190:270:.3cm)
(7.05,.2) arc (280:360:.3cm)
(7.6,-.17) arc (50:140:.3cm)
(7.7,.45) arc (180:250:.3cm)
;

\tikzset{EdgeStyle/.style={->,relative=false,in=-30,out=30,line width=.5pt}}
  \Edge[label=$e$](A5)(A6) 
  \end{tikzpicture}
\]

\noindent
Consider the following homotopy strings where the top line indicates the cohomological degree of the corresponding projective indecomposable:
\[
\xymatrix{ & 0 & 1& 0 & -1 & -2 & -3 & -2 & -1 \\
\sigma \colon &
7 \ar[r]^-{i} &
8                &
\ar[l]_-{\bk \bc \bb} 2 &
\ar[l]_-{\bf} 6 &
\ar[l]_-{\bg} 3 &
\ar[l]_-{\bb \bh} 7 \ar[r]^-{il} &
5 \ar[r]^-{d} &
4
}
\]
and
\[
\xymatrix{ & -1 & 0 & 1 & 2 \\
\tau \colon &
14 \ar[r]^-{r} &
13        \ar[r]^-{qpo}        &
9 \ar[r]^-{m} &
7.
}
\]
Examining the homotopy string $\sigma$, we see that there are four indecomposable summands of $H^\bullet(\P_\sigma)$. We list them below in order of ascending cohomological degree.
\begin{itemize}
\item We have $H^{-2}(\P_\sigma) = M(w_1)$, where $w_1 = \bh i$ coming from the maximal alternating homotopy substring rule (Lemma~\ref{lem:max-alternating}) applied to $3 \stackrel{\bb \bh}{\longleftarrow} 7 \stackrel{il}{\too} 5$.
\item We have $H^{-1}(\P_\sigma) = M(w_2)$, where $w_2 = \bc \bb \bh \bm \bn$ coming from the cokernel rule (Lemma~\ref{lem:cokernel}) applied to $5 \stackrel{d}{\too} 4$.
\item We have $H^0(\P_\sigma) = M(w_3)$, where $w_3 = n$ coming from the kernel rule (Lemma~\ref{lem:kernel}) applied to $7 \stackrel{i}{\too} 8$.
\item We have $H^1(\P_\sigma) = M(w_4)$, where $w_4 = \bk \bc$ coming from the maximal alternating homotopy substring rule (Lemma~\ref{lem:max-alternating}) applied to $7 \stackrel{i}{\too} 8 \stackrel{\bk \bc \bb}{\longleftarrow} 2$.
\item By Lemma~\ref{lem:other-case}, all remaining parts of the homotopy string $\sigma$ contribute zero to the cohomology $H^\bullet(\P_\sigma)$.
\end{itemize}

Examining the homotopy string $\tau$, in a similar fashion we obtain the following for  $H^\bullet(\P_\tau)$.
\begin{itemize}
\item We have $H^1(P_\tau ) = M(w_1)$ where $w_1 = p$ coming from the nontrivial homotopy letter rule (Lemma~\ref{lem:other-case}) applied to $13 \stackrel{qpo}{\too} 9 \stackrel{m}{\too} 7$.
\item We have $H^2(P_\tau) = M(w_2)$ where $w_2 = j$ coming from  the cokernel rule (Lemma~\ref{lem:cokernel}) applied to $9 \stackrel{m}{\too} 7$. 
\item There is no non-zero contribution to the cohomology coming from the nontrivial homotopy letter rule (Lemma~\ref{lem:other-case}) applied to $14 \stackrel{r}{\too} 13 \stackrel{qpo}{\too} 9 $  or from the kernel rule (Lemma~\ref{lem:kernel}) applied to $14 \stackrel{r}{\too} 13$. 
\end{itemize}
\end{example}

We end this section by giving the homotopy string or band of the minimal projective resolution of a string or quasi-simple band module, which will be heavily used in the next sections. We note that  gentle algebras are string algebras and that  there is a large body of work on string algebras. In particular, projective resolutions and syzygies, have been  considered before, see for example  \cite{HZS, Kalck}. 
In \cite{WW}, minimal projective presentations of string and band modules were given in terms of string combinatorics, which in the case of gentle algebras can be formulated in terms of homotopy string combinatorics. These projective presentations correspond to maximal alternating homotopy substrings sitting between degrees $-1$ and $0$.
Before stating the result, we set up some notation.

\begin{definition}
Let $a$ and $b$ be such that $\ba, b \in Q_1$. Define
\begin{itemize}
\item $\inverse(a) \coloneqq \sigma_{-1} \sigma_{-2} \cdots $ to be maximal inverse antipath ending with $\sigma_{-1} = a$;
\item $\direct(b) \coloneqq \cdots \sigma_2 \sigma_1$ to be the maximal direct antipath starting with $\sigma_1 = b$. 
\end{itemize}
\end{definition}

\begin{corollary} \label{cor:projective-resolution}
Let $w = w_n \cdots w_1$ be a string. Define a homotopy string $\pi(w)$ as follows:
\begin{enumerate}
\item  $\pi(w) = \direct(b) \, w' \, \inverse(a)$ if there are $a$ and $b$ such that $\ba, b \in Q_1$ and $b w a$ is defined as a string and where $w' = w$.
\item $\pi(w) = w' \, \inverse(a)$ if there is an $a$ with $\ba \in Q_1$ such that $w a$ is defined as a string but no $b \in Q_1$ with $b w$ defined as a string, where $w' = w_j \cdots w_1$ after removing a maximal direct substring $w_n \cdots w_{j+1}$ of $w$.
\item $\pi(w) = \direct(b) \, w'$ if there is $b \in Q_1$ with $b w$ defined as a string but no $a$ with $\ba \in Q_1$ such that $w a$ is defined as a string, where $w' = w_n \cdots w_i$ after removing a maximal inverse substring $w_{i-1} \cdots w_1$ of $w$.
\item $\pi(w) = w'$ if there are no $a$ and $b$ such that $\ba, b \in Q_1$ and $b w a$ is defined as a string, where $w' = w_j \cdots w_i$ after removing a maximal direct substring $w_n \cdots w_{j+1}$ and a maximal inverse substring $w_{i-1} \cdots w_1$.
\item $\pi(w) = w$ if $w$ is a band.
\end{enumerate}
Then $\P_{\pi(w)}$ (resp., $\B_{\pi(w)}$ when $w$ is a band) is a projective resolution of $M(w)$ (resp., $B(w)$). 
\end{corollary}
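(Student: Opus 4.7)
The plan is to verify directly, using Theorem~\ref{thm:cohomology}, that the complex $\P_{\pi(w)}$ (or $\B_{\pi(w)}$ in the band case) has cohomology concentrated in degree $0$ and equal to $M(w)$ (resp., $B(w)$). Since all terms of this complex are indecomposable projectives by construction, and the complex is bounded above (the antipaths $\direct(b)$, $\inverse(a)$ extend it to the left in cohomological degree), this will establish that it is the desired projective resolution.

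First I would check that $\pi(w)$ is a well-defined homotopy string (or band, in Case~(5)). The crucial observation is that in a maximal antipath $\direct(b)$ or $\inverse(a)$, each arrow is its own homotopy letter because consecutive arrows pass through a relation in $I$. Combined with the hypothesis that $b w a$ is a string in Case~(1) and the analogous one-sided hypotheses in Cases~(2)--(3), this ensures that attaching the antipaths to $w$ (or to the truncation $w'$) yields a valid homotopy string with a clean homotopy letter decomposition. The five cases are exhaustive: they account for the four combinations of left/right extendability of $w$ by a letter $b$ (resp., $a$) with $\overline{b} \in Q_1$ (resp., $\overline{a} \in Q_1$), plus the band case.

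The cohomology calculation now splits into two parts. First, any arrow lying in $\direct(b)$ or $\inverse(a)$ is a length-one homotopy letter both of whose neighbouring homotopy letters are of the same type and pass through a relation in $I$. By Lemma~\ref{lem:other-case}(2) applied to each such arrow, these segments contribute zero cohomology. Second, the middle portion $w$ (or its truncation $w'$) is a maximal alternating homotopy substring of $\pi(w)$ sitting between cohomological degree $0$ (peaks, i.e.\ the projective covers of the tops of $M(w)$) and degree $-1$ (troughs, into which pairs of adjacent letters map). Applying Lemma~\ref{lem:max-alternating} in Cases~(1) and (5), and the appropriate combination of Lemma~\ref{lem:combined} or Lemma~\ref{lem:cokernel} with Lemma~\ref{lem:kernel} together with their duals in Cases~(2)--(4), yields $M(w)$ (resp., $B(w)$) as the unique nonzero cohomology summand, placed in degree $0$. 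Crucially, in the truncated cases the pieces removed from $w$ to form $w'$ are restored at the boundaries by the cokernel/kernel rules, so that the string (resp.\ band) read off the cohomology is precisely $w$ with its tails included.

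The main obstacle is the bookkeeping of endpoint behaviour in Cases~(2)--(4). Verifying that the maximal direct (resp.\ inverse) substring removed from the left (resp.\ right) end of $w$ when defining $w'$ is exactly the substring reconstructed by the appropriate boundary rule requires a careful case analysis of the shape of $w$ at its endpoints and of which of Lemmas~\ref{lem:cokernel}, \ref{lem:combined}, \ref{lem:kernel} applies on each side. Once this analysis is completed, the resulting cohomology recovers $M(w)$ in degree $0$ and vanishes in all other degrees, confirming that $\P_{\pi(w)}$ (resp.\ $\B_{\pi(w)}$) is the desired projective resolution.
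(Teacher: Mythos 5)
Your proposal is correct and takes essentially the same route as the paper, whose proof consists precisely of the observation that the cohomology rules of Section~\ref{sec:cohomology} (Theorem~\ref{thm:cohomology}) applied to $\pi(w)$ give $M(w)$ (resp.\ $B(w)$) in degree zero and nothing in any other degree. Your extra bookkeeping---antipath arrows contributing nothing by Lemma~\ref{lem:other-case} and by maximality of the antipaths, the module part recovered via Lemmas~\ref{lem:max-alternating}, \ref{lem:cokernel}, \ref{lem:combined}, \ref{lem:kernel} and their duals, with the removed maximal direct/inverse prefixes and suffixes restored at the boundaries---simply makes explicit what the paper leaves to the reader.
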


\begin{proof}
The computation of the cohomology of $\P_{\pi(w)}$ (resp., $\B_{\pi(w)}$) in Theorem~\ref{thm:cohomology} gives $M(w)$ (resp., $B(w)$) in cohomological degree zero and zero in all other degrees.
\end{proof}

\begin{corollary} \label{cor:band-pd-one}
Let $A$ be a gentle algebra. Then any quasi-simple band module has projective dimension one.
\end{corollary}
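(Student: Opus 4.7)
The plan is to invoke Corollary~\ref{cor:projective-resolution}(5) directly: for any band $w$ we have $\pi(w) = w$, and $\B_{\pi(w)} = \B_w$ is a projective resolution of $B(w)$. Thus it suffices to check that this resolution is a genuine two-term complex.

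The key point is that when a classical band $w$ is decomposed into homotopy letters, the direct and inverse homotopy letters strictly alternate around the cycle. Indeed, since $w$ is a classical string, no two consecutive letters of $w$ compose to an element of $I$, so by the definition of a homotopy letter the only way a new homotopy letter can begin is by a change of direction between a letter and its successor. Moreover $w$ cannot consist entirely of direct letters (or entirely of inverse letters), since in that case the underlying cyclic path, together with all of its powers, would avoid $I$, contradicting finite-dimensionality of $\Lambda$. Hence the homotopy letters of $\pi(w)$ strictly alternate, giving equal numbers of direct and inverse ones, consistent with $\pi(w)$ being a valid homotopy band.

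This alternation forces the band complex $\B_w$ to have projective summands in exactly two consecutive cohomological degrees, which we may normalise to $0$ and $-1$, and both terms are nonzero: each vertex of the cyclic walk contributes an indecomposable projective summand, and the alternation distributes these vertices evenly between the two degrees. Hence $\B_w$ is a genuine two-term projective resolution of $B(w)$, so $\mathrm{pd}(B(w)) \leq 1$, while the nonzero $P^{-1}$ together with the minimality of the resolution prevents $B(w)$ from being projective. Therefore $\mathrm{pd}(B(w)) = 1$. The only subtlety in this argument is the alternation of homotopy letter types in a classical band, which I do not anticipate as a real obstacle since it follows immediately from the admissibility of $I$ and the finite-dimensionality of $\Lambda$.
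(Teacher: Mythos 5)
Your proposal is correct and follows the same route as the paper, which states the corollary as an immediate consequence of Corollary~\ref{cor:projective-resolution}(5): $\pi(w)=w$, and the resulting band complex $\B_{w}$ is a two-term complex since the homotopy letters of a classical band alternate cyclically (cf.\ Remark~\ref{rem:observations} and the remarks on maximal alternating homotopy substrings). Your added details -- the alternation argument via admissibility of $I$ and the minimality/non-projectivity observation giving projective dimension exactly one -- are exactly the implicit content of the paper's argument.
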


The maximal direct substring $w_n \cdots w_{j+1}$ removed from $w$ in Corollary~\ref{cor:projective-resolution}(2) will be called a \emph{maximal direct suffix}. Likewise, the maximal inverse substring $w_{i-1} \cdots w_1$ removed from $w$ in Corollary~\ref{cor:projective-resolution}(3) will be called a \emph{maximal inverse prefix}.

\begin{definition}
For the homotopy string $\sigma = \pi(w)$ defined in Corollary~\ref{cor:projective-resolution} above we call the homotopy substrings $\inverse(a)$ and $\direct(b)$ the \emph{antipath part} of $\pi(w)$. By abuse of notation we write $\inverse(w) = \inverse(a)$ and $\direct(w) = \direct(b)$. In the notation of Corollary~\ref{cor:projective-resolution}, we will call $w'$ the \emph{module part} of $\pi(w)$.

An inverse homotopy letter $\sigma_i = \bar{a}_1 \cdots \bar{a}_k$ of $\sigma$ is \emph{incident with $\inverse(a)$} if $\bar{a}_k = \bar{a}$. Likewise, a direct homotopy letter $\sigma_j = b_l \cdots b_1$ of $\sigma$ is \emph{incident with $\direct(b)$} if $b_1 = b$.
\end{definition}

In the following, as usual, we write $\Q_{\pi(w)}$ when we do not wish to specify whether $w$ is a string or a band.

\begin{remark} \label{rem:observations}
We make the following straightforward observations regarding the forms of the homotopy strings occurring in Corollary~\ref{cor:projective-resolution}. 
\begin{enumerate}
\item If there is no $a$ such that $wa$ is defined as a string then the homotopy string $\pi(w)$ starts with a direct homotopy letter whose target lies in degree $0$. \label{starts}
\item If there is no $b$ such that $bw$ is defined as a string 
then the homotopy string $\pi(w)$ ends with an inverse homotopy letter whose target lies in degree $0$. \label{ends}
\item If $\sigma_{i+1} \sigma_i$ are consecutive homotopy letters with the same orientation then at least one of them lies in the antipath part and the other either lies in the antipath part or else is incident with $\direct(w)$ or $\inverse(w)$.
\item Owing to being a projective resolution of a module, the string/band complex $\Q_{\pi(w)}$ attains its maximal cohomological degree in degree $0$. Moreover, homotopy letters occurring in the module part of $\pi(w)$ provide components of the differential in $\Q_{\pi(w)}$ from degree $-1$ to degree $0$. 
Indeed, together with those homotopy letters incident with $\direct(w)$ and $\inverse(w)$ these provide all components of the differential in $\Q_{\pi(w)}$ from degree $-1$ to degree $0$. \label{degrees}
\item Suppose $\sigma_k$ is a homotopy letter of $\pi(w)$. If $\length(\sigma_k) > 1$ then $\deg(P(e(\sigma_k))) \in \{0,-1\}$ and $\deg(P(s(\sigma_k))) \in \{-1,0\}$, where $\deg(P(x))$ denotes the cohomological degree in which $P(x)$ occurs. \label{lengths}
\end{enumerate}
\end{remark}

\bigskip

\section{Determining extensions in the module category}\label{sec:determining extensions in module category}

Recall that in \cite{Schroer} extensions for string modules are given in terms of string combinatorics. 

\begin{definition} \label{def:extensions}
Let $v$ be a string or band and $w$ be a string or band.
\begin{enumerate}[label=(\arabic*)]
\item \label{arrow} \textbf{(Arrow extension)}
If there exists $a \in Q_1$ such that $u = w a v$ is a string then there is a non-split short exact sequence
\[ 0 \to M(w) \to M(u) \to M(v) \to 0. \] 
\item \label{overlap} \textbf{(Overlap extension)} 
Suppose that $v = v_L \bar{B} m A v_R$ and  $w =     w_L D m \bar{C} w_R$ with $A,B,C,D \in Q_1$ and $m, v_L, v_R, w_L, w_R$ (possibly trivial) strings such that  
\begin{enumerate}[label=(\roman*)]
\item if $A = \emptyset$ then $C \neq \emptyset$;
\item if $B = \emptyset$ then $D \neq \emptyset$; and,
\item \label{compatibly-oriented} if $m = 1_x$ for some $x \in Q_0$, i.e. a trivial string, then $CA \in I$ and $BD \in I$ (whenever they exist, subject to the constraints above). 
\end{enumerate} 
Then there is a non-split short exact sequence
\[ 0 \to M(w) \to M( u     ) \oplus M(u') \to M(v) \to 0\]
where $u = w_L D m A v_R$ and $u' = v_L \bar{B} m \bar{C} w_R$.
\end{enumerate}
\end{definition}

\begin{remark}
Condition~\ref{compatibly-oriented} is a `compatibly oriented' condition corresponding to \cite[Def 2.1]{CPS}.
We remark that this condition is missing in the definition of overlap extension in \cite{CS}, but it is used implicitly in the arguments therein.
\end{remark}

Recall the canonical isomorphism \eqref{isomorphism} from the introduction,
\[
\Phi: \Hom_{\KminusL}(\Q_{\pi(v)}, \Sigma \Q_{\pi(w)}) \stackrel{\sim}{\to} \Ext^1_\Lambda (M(v),M(w)).
\]
 
\begin{theorem}\label{thm induced extensions} 
With the notation above, let $M(v)$ and $M(w)$ be indecomposable $\Lambda$-modules with strings or bands $w$ and $v$ respectively and let $\Q_\sigma$ and $\Q_\tau$ with $\sigma = \pi(v)$ and $\tau = \pi(w)$ be their  projective resolutions. Then 
for any standard basis element $\f$ in $\ \Hom_{\KminusL}(\Q_\sigma, \Sigma \Q_\tau) $ the corresponding extension $\Phi(\f)$ in $\Ext^1_\Lambda (M(v),M(w))$ is given by an arrow or an overlap extension. In particular, the set of overlap and arrow extensions form a generating set for $\Ext^1_\Lambda (M(v),M(w))$.
\end{theorem}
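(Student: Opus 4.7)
The plan is to analyse, case by case, each of the four types of standard basis elements from Theorem~\ref{thm:ALP} for $\Hom_{\KminusL}(\Q_{\pi(v)}, \Sigma \Q_{\pi(w)})$. For each such $\f$, we form the mapping cone using the graphical calculus of \cite{CPS} and then identify the middle term of $\Phi(\f)$ via the cohomology algorithm of Section~\ref{sec:cohomology}. It suffices to show that in every case this middle term matches either the $M(u)$ of an arrow extension or the $M(u) \oplus M(u')$ of an overlap extension.

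The central case is that of a graph map $\f \colon \Q_{\pi(v)} \to \Sigma \Q_{\pi(w)}$ with maximal common homotopy substring $\rho$. Since $\Q_{\pi(v)}$ has cohomology only in degree $0$ and $\Sigma \Q_{\pi(w)}$ only in degree $-1$, Remark~\ref{rem:observations}(\ref{degrees}) forces $\rho$ to straddle degrees $-1$ and $0$ in $\Q_{\pi(v)}$ (hence degrees $-2$ and $-1$ in $\Sigma \Q_{\pi(w)}$). Read as a walk in $Q$, $\rho$ therefore lies in the module parts of $\pi(v)$ and $\pi(w)$ and so determines a classical string $m$ appearing as a common substring of $v = v_L \bar B m A v_R$ and $w = w_L D m \bar C w_R$, with the endpoint configuration matching the hypothesis of Theorem~\ref{thm:strings}(2). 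The cut-and-reglue mapping cone produces two new homotopy strings, obtained by exchanging the portions of $\pi(v)$ and $\pi(w)$ on either side of $\rho$. Applying Lemmas~\ref{lem:max-alternating}--\ref{lem:other-case} to each of these computes the degree-$0$ cohomology of the cone as precisely $M(u) \oplus M(u')$ with $u = w_L D m A v_R$ and $u' = v_L \bar B m \bar C w_R$, matching the overlap extension. Degenerate cases where $\rho$ meets the end of $\sigma$ or $\tau$ cause one of the summands to collapse.

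For the remaining types of standard basis elements—singleton single maps, singleton double maps, and the single or double maps arising from a quasi-graph map $\phi \colon \Q_{\pi(v)} \wiggle \Q_{\pi(w)}$—the same basic strategy applies: compute the mapping cone and extract its degree-$0$ cohomology via the rules of Section~\ref{sec:cohomology}. In these cases, the connecting paths $f$, $f_L$, $f_R$ play the role analogous to that of $\rho$ in the graph map case. The constraints (L1)--(R2) on such maps, together with the fact that both $\Q_{\pi(v)}$ and $\Q_{\pi(w)}$ are projective resolutions, pin down the position of these paths within the module parts. When the connecting data collapses to a single arrow bridging the ends of $w$ and $v$ we recover an arrow extension $u = w a v$; otherwise, the paths exhibit an overlap region whose endpoints satisfy the endpoint conditions of Theorem~\ref{thm:strings}(2) and one recovers an overlap extension (possibly with trivial~$m$).

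The main obstacle is bookkeeping around the antipath parts $\direct(\cdot)$, $\inverse(\cdot)$ of the projective resolutions. By Remark~\ref{rem:observations}, these tails contribute nothing to cohomology in degree $0$, but they do interact nontrivially with graph maps and quasi-graph maps (whose common substrings or non-commutativity conditions may run into them) and with the boundary conditions on singleton single and double maps. One must verify, in each boundary case, that after forming the mapping cone the antipath segments cancel via Lemmas~\ref{lem:cokernel}, \ref{lem:combined}, and \ref{lem:kernel}, so that the surviving degree-$0$ cohomology is exactly the string module(s) predicted by the corresponding arrow or overlap extension. Once every type of standard basis element has been accounted for in this way, the image $\Phi(\f)$ is always an arrow or overlap extension, and since the standard basis generates $\Hom_{\KminusL}(\Q_{\pi(v)}, \Sigma \Q_{\pi(w)})$, the set of arrow and overlap extensions generates $\Ext^1_\Lambda(M(v), M(w))$.
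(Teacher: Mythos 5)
Your overall framework---running through the four types of standard basis elements of Theorem~\ref{thm:ALP}, forming mapping cones with the graphical calculus of \cite{CPS}, and reading off degree-zero cohomology via the rules of Section~\ref{sec:cohomology}---is indeed the paper's strategy, but your central case contains a genuine error. For a graph map $\f \colon \Q_{\pi(v)} \to \Sigma \Q_{\pi(w)}$ the maximal common homotopy substring $\rho$ must, by definition, occur in the \emph{same} cohomological degrees in both complexes; since $\Sigma \Q_{\pi(w)}$ is concentrated in degrees $\leq -1$, $\rho$ cannot occupy degrees $-1$ and $0$, and your claim that it straddles degrees $-1,0$ in $\Q_{\pi(v)}$ while sitting in degrees $-2,-1$ of $\Sigma \Q_{\pi(w)}$ is inconsistent with the definition of a graph map. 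The correct dichotomy (Lemmas~\ref{Graph map in antipaths} and \ref{lem:graph map single degree}) is: either $\f$ is incident with antipaths in both complexes, in which case $\Phi(\f)$ is an \emph{arrow} extension, or $\f$ is supported in the module parts, in which case degree considerations force it to be supported in a single indecomposable projective in degree $-1$, and $\Phi(\f)$ is an overlap extension with \emph{trivial} overlap (a simple module) or again an arrow extension. In particular, graph maps from $\Q_{\pi(v)}$ to $\Sigma\Q_{\pi(w)}$ never produce the general overlap extensions with nontrivial overlap $m$ that you ascribe to them.

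Those extensions arise instead from quasi-graph maps (the homotopy families of non-singleton single and double maps) and from singleton single maps, and this is where the substantive work lies: one needs the degree restriction of Lemma~\ref{lem:quasi-degrees}, the endpoint analyses of Lemmas~\ref{lem:quasi-right-0} and \ref{lem:quasi-right-minus-1} feeding into Proposition~\ref{prop:quasi-overlaps}, and for singleton single maps the restrictions of Lemmas~\ref{lem:no-singleton-doubles}--\ref{lem:degree-minus-one} together with the case analysis of Proposition~\ref{prop:singleton-single-to-extensions}. Your proposal leaves all of this at ``the same basic strategy applies,'' with the roles of $f$, $f_L$, $f_R$ only gestured at. Note also that singleton double maps between a projective resolution and a shifted projective resolution do not exist at all for degree reasons (Lemma~\ref{lem:no-singleton-doubles}), so they are to be excluded rather than ``accounted for.'' As it stands the proposal misassigns which standard basis elements give which extensions and omits the case analysis in which the actual content of the theorem resides, so it does not yet establish the statement.
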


In the rest of this section we prove Theorem~\ref{thm induced extensions} by considering each type of map of the standard basis of $\ \Hom_{\KminusL}(\Q_\sigma, \Sigma \Q_\tau) $  as defined in \cite{ALP}. We start by showing that Theorem~\ref{thm induced extensions} holds for graph maps. 

\subsection{Graph maps}

Throughout this subsection we fix the following setup. 

\begin{setup} \label{setup:quasi}
Let $v$ and $w$ be strings or bands and $M(v)$ and $M(w)$ be the corresponding string or band modules. 
Let $\sigma = \pi(v)$ and $\tau = \pi(w)$ be the homotopy strings or bands corresponding to the projective resolutions $\Q_\sigma$ and $\Q_\tau$ of $M(v)$ and $M(w)$ as given in Corollary~\ref{cor:projective-resolution}, respectively.
\end{setup}

\begin{lemma}\label{Graph map in antipaths}
Let $\f \colon \Q_{\sigma} \to \Sigma \Q_{\tau}$ be a graph map incident with an antipath in $ \Q_{\sigma}$ and an antipath in $ \Sigma \Q_{\tau}$. Then $\Phi(\f)$ is an arrow extension in $\Ext^1_\Lambda (M(v),M(w))$. 
\end{lemma}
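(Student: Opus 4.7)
The plan is to compute the mapping cone $C^\bullet_\f$ via the graphical mapping cone calculus of \cite{CPS}, identify its cohomology using Theorem~\ref{thm:cohomology}, and read off the resulting extension under the isomorphism $\Phi$. First I would analyse the combinatorial shape of $\f$: being a graph map incident with antipaths in both $\Q_\sigma$ and $\Sigma\Q_\tau$ places $\f$ in case~\ref{sec:graph-maps}(2), so the maximal common homotopy substring $\rho$ is a one-sided infinite antipath. By Corollary~\ref{cor:projective-resolution} and Remark~\ref{rem:observations}, the only one-sided infinite antipath substrings of $\Q_{\pi(v)}$ and $\Q_{\pi(w)}$ are their antipath parts $\direct(\cdot)$ and $\inverse(\cdot)$, which live in strictly negative cohomological degrees. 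Tracking the $\Sigma$-shift on $\Q_\tau$, the requirement that $\rho$ sit in matching cohomological degrees of $\Q_\sigma$ and $\Sigma\Q_\tau$ forces $\rho$ to pair one orientation of antipath in $\pi(v)$ with the opposite-end antipath of $\pi(w)$. Reading off where $\rho$ stops being common, i.e.\ where the flanks $\sigma_R, \tau_R$ begin, singles out a unique arrow $a \in Q_1$ at the junction of the module parts of $\pi(v)$ and $\pi(w)$, and shows that $u \coloneqq w a v$ is a string.

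Next, by the mapping cone calculus of \cite{CPS}, when a graph map is incident with a common antipath, that antipath cancels in the mapping cone, leaving a bounded complex whose underlying homotopy string is precisely $\pi(u)$; in particular, $C^\bullet_\f \cong \Q_{\pi(u)}$ in $\KminusL$. Since $\Q_{\pi(u)}$ is the projective resolution of $M(u)$ by Corollary~\ref{cor:projective-resolution}, Theorem~\ref{thm:cohomology} yields $H^0(C^\bullet_\f) = M(u)$ and $H^i(C^\bullet_\f) = 0$ for $i \neq 0$. Taking cohomology of the standard triangle $\Q_\tau \to C^\bullet_\f \to \Q_\sigma \xrightarrow{\f} \Sigma\Q_\tau$ produces the short exact sequence $0 \to M(w) \to M(u) \to M(v) \to 0$, which is exactly an arrow extension in $\Ext^1_\Lambda(M(v),M(w))$; non-splitness is immediate because $\f$, as a standard basis element, is nonzero in $\KminusL$, hence $\Phi(\f) \neq 0$.

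The main obstacle is the combinatorial bookkeeping in the first step: correctly aligning antipaths across the $\Sigma$-shift, identifying the unique arrow $a$ at the module-part/antipath-part junction, and verifying that $w a v$ avoids forbidden subwalks of the form $a \bar{a}$, $\bar{a} a$, or a length-two relation. Most of this reduces to a careful case analysis according to whether the relevant endpoints of $v$ and $w$ admit further direct or inverse continuations, which is exactly the case distinction organising Corollary~\ref{cor:projective-resolution}.
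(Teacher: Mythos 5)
Your overall route is the paper's route: compute the mapping cone of $\f$ via the graphical calculus of \cite{CPS}, identify its cohomology with the tools of Section~\ref{sec:cohomology}, and read the extension off the cohomology long exact sequence of the triangle $\Q_\tau \to C^\bullet_\f \to \Q_\sigma \xrightarrow{\f} \Sigma\Q_\tau$. However, there is one substantive missing step at the end. Knowing that $H^0(C^\bullet_\f) \cong M(u)$ with $u = wav$, and that the induced sequence $0 \to M(w) \to M(u) \to M(v) \to 0$ is non-split, does not yet show that $\Phi(\f)$ \emph{is} the arrow extension: an element of $\Ext^1_\Lambda(M(v),M(w))$ is an equivalence class of short exact sequences, and a priori two non-proportional classes can have isomorphic middle terms, so "middle term $M(wav)$ plus non-split" does not pin down the class. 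What identifies $\Phi(\f)$ as the arrow extension is the verification that the map $H^0(\g) \colon M(wav) \to M(v)$ induced by the triangle is the canonical surjection coming from the factor-string decomposition of $wav$ (whence, by exactness, $H^0(\h)$ is the canonical inclusion of $M(w)$ as a substring module). This is exactly what the second half of the paper's proof does, by wrapping the unfolded diagram of $\g$ back into complexes as in the proof of Lemma~\ref{lem:max-alternating} and tracing the identity components; nothing in Theorem~\ref{thm:cohomology} or in the cone calculus by itself supplies this identification, and your proposal asserts the conclusion ("which is exactly an arrow extension") without it.

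Two smaller points. First, deferring the combinatorial bookkeeping is reasonable, but note that this case analysis (the paper's Cases 1--4, governed by whether the relevant end letters of $v$ and $w$ are direct or inverse, plus the degenerate trivial cases) is where one actually produces the arrow $a$, checks $wav$ is a string, and computes the cone string $\phi' w a v \phi$, so it is the bulk of the proof rather than a routine afterthought. Second, the common homotopy substring $\rho$ need not be a one-sided \emph{infinite} antipath: when $M(v)$ or $M(w)$ has finite projective dimension the antipath parts of $\pi(v)$, $\pi(w)$ are finite, and "incident with an antipath" only requires the identity components of $\f$ to reach the antipath parts, after which the propagation argument (length-one homotopy letters plus gentleness) carries the identifications to the ends whether finite or infinite; relatedly, $C^\bullet_\f \simeq \Q_{\pi(u)}$ need not be bounded, only right-bounded. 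Neither of these affects the structure of your argument, but the first paragraph's omission is a genuine gap as written.
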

 
\begin{proof} 
Recall  that $\pi(v) = \sigma$ and $\pi(w) = \tau$. 
We treat the case $\pi(v) = \direct(a) v \phi$ and  $\pi(w) = \phi' w \, \inverse(a')$ in detail, where  $a, \bar{a}' \in Q_1$, $\phi$ is an inverse antipath and $\phi'$ is a direct antipath, putting $\pi(v)$ and $\pi(w)$ both in case $(1)$ of Corollary~\ref{cor:projective-resolution}. If one of $\phi = \emptyset$ or $\phi' = \emptyset$, i.e. if $\pi(v) = \direct(a)v'$ or $\pi(w) = w' \inverse(a')$, then the calculations below remain essentially the same, even in the case that $v$ is inverse and $w$ is direct.

To simplify the notation, set $\direct(a) = \theta = \cdots \theta_2 \theta_1 $ and $\inverse(a') = \theta' = \theta'_1 \theta'_2 \cdots$ with $\theta_i, \bar{\theta}'_i \in Q_1$ and $\theta_1 = a$ and $\theta'_1 = a'$. 
Suppose that $\f$ induces an isomorphism of projective modules lying in $\theta$ and $\theta'$ and suppose this isomorphism is in degree $-n$. Then as homotopy letters in antipaths are of length 1 and since $\Lambda$ is gentle, there exists an isomorphism $\theta_n \simeq \bar{\theta}'_{n-1}$ and we obtain an isomorphism in degree $-n-1$. We now continue inductively to the left and right; we only need to take care about what happens in degree $-1$, which we analyse in cases below.
Write $ v = v_k \cdots v_1$ and $w = w_l \cdots w_1$. 

\smallskip
\noindent \textbf{Case 1:} \textit{$v_k$ is inverse and $w_1$ is direct.}
\smallskip

We have the following unfolded diagram
\[ 
\xymatrix@!R=5px{
\Q_\sigma \colon \ar[d]_-{\f} & \ar@{.}[r] & \xydot \ar[r]^-{\theta_3} \ar@{=}[d] & \xydot \ar[r]^-{\theta_ 2} \ar@{=}[d] & \xydot \ar[r]^-{a} \ar@{=}[d]  & \xydot   & \ar[l]_-{v_k \cdots v_i} \xydot \ar[r]  & \xydot       \ar@{~}[r]   & \\
\Sigma \Q_{\bar{\tau}} \colon      & \ar@{.}[r] & \xydot \ar[r]_-{\bar{\theta}'_2}                 & \xydot \ar[r]_-{\bar{a}' = \theta_2}                   & \xydot                  & \xydot \ar[l]^-{\bar{w}_1 \cdots \bar{w}_j}                          \ar[r]                     & \xydot       \ar@{~}[r]        & 
}
\]
 where $1 \leq i \leq k$ and $1 \leq j  \leq l$.
Then by \cite[Thm. 2.2]{CPS} the homotopy string of the (shift of the) mapping cone of $\f$ is given by $\alpha= \phi' w a v \phi $. By the form of $\pi(w)$ and $\pi(v)$, 
it follows from Corollary~\ref{cor:projective-resolution} that there exist $\bar{b}, b' \in Q_1$ such that $\phi =  b \rho$ and $\phi' = \rho' b'$ and $b' w a v b$ is a string. Then by  Lemma~\ref{lem:max-alternating}, $M(w a v)$ is the cohomology (in cohomological degree zero) of $\Q_\alpha$.

The shift of the mapping cone, $\Q_\alpha$, by definition sits in a distinguished triangle 
\begin{equation} \label{triangle}
\Q_\tau \rightlabel{\h} \Q_\alpha \rightlabel{\g} \Q_\sigma \rightlabel{\f} \Sigma \Q_\tau .
\end{equation}
We now observe that $H^0(\g) \coloneqq g \colon M(w a v) \to M(v)$ is the canonical map in the arrow extension, showing that the corresponding graph map does indeed induce the claimed arrow extension.

Decompose $v = \nu_n \cdots \nu_1$ and $w = \mu_m \cdots \mu_1$ into homotopy letters so that $\sigma = \theta \nu_n \cdots \nu_1 b \rho$ and $\alpha = \phi' \mu_m \cdots \mu_1 a \nu_n \cdots \nu_1 b \rho$. We assume that $\nu_1$ is direct so that $b$ is a homotopy letter; the case $\nu_1$ is inverse is similar. The map $\g \colon \Q_\alpha \to \Q_\sigma$ is given by the following unfolded diagram
\[
\xymatrix@!R=5px{
\Q_\alpha \colon \ar[d]_-{\g}                  & \ar@{~}[r]                     & \xydot                            & \ar[l] \xydot \ar[r]^-{\mu_1 a} \ar[d]_-{\mu_1}  & \xydot  \ar@{=}[d] & \ar[l]_-{\nu_n} \xydot \ar[r] \ar@{=}[d] & \cdots  \ar@{-}[r]   & \xydot \ar[r]^-{\nu_1} \ar@{=}[d] & \xydot \ar@{=}[d] & \ar[l]_-{b} \xydot \ar@{=}[d] & \ar[l]_-{\phi_2} \xydot \ar@{=}[d]  \ar@{.}[r] & \\
\Q_\sigma \colon                   \ar@{.}[r] & \xydot \ar[r]_-{\theta_3} & \xydot \ar[r]_-{\theta_ 2} & \xydot \ar[r]_-{a}                                                                                                       & \xydot                   & \ar[l]^-{\nu_n} \xydot \ar[r]                   & \cdots      \ar@{-}[r]   & \xydot \ar[r]_-{\nu_1} & \xydot & \ar[l]^-{b} \xydot & \ar[l]^-{\phi_2} \xydot \ar@{.}[r] &
}
\]
which is supported in cohomological degree $-1$ at the left endpoint. Wrapping $\alpha$ and $\sigma$ back up into complexes as in the proof of Lemma~\ref{lem:max-alternating}, where we have taken a `mirror image' of $\sigma$ in order to more easily match up the cohomological degree $0$ parts, we get the following diagram.
\begin{equation} \label{eq:cohomology-map}
\scalebox{0.9}{\xymatrix@!R=3mm{
                                                            & P(e(\mu_1 a)) \ar[r]^-{\omega a} \ar@{}[d]|\oplus                 & P(e(\nu_n)) \ar@{}[d]|\oplus \ar@{=}[r]       & P(e(\nu_n)) \ar@{}[d]|\oplus & \ar[l]_-{a} P(e(a)) \ar@{}[d]|\oplus  & \\
                                                             & P(s(\nu_{n})) \ar[ur]^-{\nu_n} \ar[r]^-{\nu_{n-1}} \ar@{}[d]|\oplus        & P(s(\nu_{n-1})) \ar@{}[d]|\oplus  \ar@{=}[r] & P(s(\nu_{n-1})) \ar@{}[d]|\oplus & P(s(\nu_{n})) \ar[ul]_-{\nu_n} \ar[l]_-{\nu_{n-1}} \ar@{}[d]|\oplus & \\
                                                             & P(s(\nu_{n-2})) \ar[ur]^-{\nu_{n-2}} \ar[r]^-{\nu_{n-3}} \ar@{}[d]|\oplus  & \, \vdots \ar@{}[d]|\oplus  \ar@{}[r]|\vdots     &  \vdots \, \ar@{}[d]|\oplus & P(s(\nu_{n-2})) \ar[ul]_-{\nu_{n-2}} \ar[l]_-{\nu_{n-3}} \ar@{}[d]|\oplus & \\
                                                             & \vdots \, \ar[r]^-{\nu_1} \ar@{}[d]|\oplus                                                 & P(s(\nu_1))  \ar@{=}[r]                                  & P(s(\nu_1))                & \, \vdots \ar[l]_-{\nu_1} \ar@{}[d]|\oplus  & \\
P(s(\phi_2)) \ar[r]^-{\phi_2} & P(s(b)) \ar[ur]^-{b}                                                                &                                                                       &  & P(s(b)) \ar[ul]_-{b} & P(s(\phi_2)) \ar[l]_-{\phi_2}
}}
\end{equation}
In Figure~\ref{fig:cohomology-map}, we re-write diagram \eqref{eq:cohomology-map} as in Figure~\ref{fig:coh}. Here we see immediately that $H^0(\g)$ is the canonical factor map $M(wav) \onto M(v)$.
\begin{figure} 
\scalebox{0.55}{\begin{tikzpicture}

\coordinate[circle,fill=black!20,draw, scale=1.2,label=left:{$\quad$},label=center:{$\bullet$}] (a1);
\coordinate[circle,below right=1cm of a1,label=right:{$\quad$}] (a1');
\coordinate[circle,below left=1cm of a1,label=right:{$\quad$}] (a1'');
\coordinate[circle,above left=2cm of a1,label=above:{$s(\nu_{n-1})$},label=center:{$\bullet$}] (a2);
\coordinate[circle,fill=black!20,draw, scale=1.2,below left=2cm of a2,label=above:{$\quad$},label=center:{$\bullet$}] (a3);
\coordinate[circle,below right=1cm of a3,label=below:{$\quad$}] (a3');
\coordinate[circle,below left=1cm of a3,label=below:{$\quad$}] (a3'');
\coordinate[circle,above left=2cm of a3,label=above:{$e(\nu_n)$},label=center:{$\bullet$}] (a4);
\coordinate[circle,below left=2cm of a4,label=below:{$\quad$},label=center:{$\bullet$}] (a5);
\coordinate[circle,below left=2cm of a5,label=below:{$\quad$},label=center:{$\bullet$}] (a6);
\coordinate[circle,below right=1cm of a6,label=below:{$\quad$}] (a6');
\coordinate[circle,below left=1cm of a6,label=below:{$\quad$}] (a6'');
\coordinate[circle,above left=2cm of a6,label=above:{$s(\mu_3)$},label=center:{$\bullet$}] (a7);
\coordinate[circle,below left=2cm of a7,label=below:{$\quad$},label=center:{$\bullet$}] (a8);
\coordinate[circle,below right=1cm of a8,label=below:{$\quad$}] (a8');
\coordinate[circle,below left=1cm of a8,label=below:{$\quad$}] (a8'');

\draw[->,decorate,decoration={snake,amplitude=.4mm,segment length=2mm}] 
(a1)-- node [anchor=south west,scale=.9]{$\alpha_{n-2}$} (a1');
\draw[->,decorate,decoration={snake,amplitude=.4mm,segment length=2mm}] 
(a1)-- node [anchor=south east,scale=.9]{$\beta_{n-2}$} (a1'');
\draw[->,decorate,decoration={snake,amplitude=.4mm,segment length=2mm}] 
(a2)-- node [anchor=south west,scale=.9]{$\nu_{n-2}$} (a1);
\draw[->,decorate,decoration={snake,amplitude=.4mm,segment length=2mm}] 
(a2)-- node [anchor=south east,scale=.9]{$\nu_{n-1}$} (a3);
\draw[->,decorate,decoration={snake,amplitude=.4mm,segment length=2mm}] 
(a3)-- node [anchor=south west,scale=.9]{$\alpha_{n}$} (a3');
\draw[->,decorate,decoration={snake,amplitude=.4mm,segment length=2mm}] 
(a3)-- node [anchor=south east,scale=.9]{$\beta_{n}$} (a3'');
\draw[->,decorate,decoration={snake,amplitude=.4mm,segment length=2mm}] 
(a4)-- node [anchor=south west,scale=.9]{$\nu_{n}$} (a3);
\draw[<-]  (a5)--node [anchor=south east,scale=.9]{$a$}(a4); 
\draw[->,decorate,decoration={snake,amplitude=.4mm,segment length=2mm}] 
(a5)-- node [anchor=south east,scale=.9]{$\mu_{1}$} (a6);
\draw[->,decorate,decoration={snake,amplitude=.4mm,segment length=2mm}] 
(a6)-- node [anchor=south west,scale=.9]{$\gamma_{2}$} (a6');
\draw[->,decorate,decoration={snake,amplitude=.4mm,segment length=2mm}] 
(a6)-- node [anchor=south east,scale=.9]{$\delta_{2}$} (a6'');
\draw[->,decorate,decoration={snake,amplitude=.4mm,segment length=2mm}] 
(a7)-- node [anchor=south west,scale=.9]{$\mu_{2}$} (a6);
\draw[->,decorate,decoration={snake,amplitude=.4mm,segment length=2mm}] 
(a7)-- node [anchor=south east,scale=.9]{$\mu_{3}$} (a8);
\draw[->,decorate,decoration={snake,amplitude=.4mm,segment length=2mm}] 
(a8)-- node [anchor=south west,scale=.9]{$\gamma_{4}$} (a8');
\draw[->,decorate,decoration={snake,amplitude=.4mm,segment length=2mm}] 
(a8)-- node [anchor=south east,scale=.9]{$\delta_{2}$} (a8'');

\draw [line width=15pt,opacity=0.15,black,line cap=round,rounded corners] (a1.center) -- (a2.center) -- (a3.center)--(a4.center)--(a5.center)--(a6.center)--(a7.center)--(a8.center);


\coordinate[circle,above right=.5cm of a1, label=above:{$\quad$}] (c');
\coordinate[circle,right=1cm of c', label=above:{$\cdots$}] (c);
\coordinate[circle,below right=.5cm of c, label=above:{$\quad$}] (c'');

\coordinate[circle,fill=black!20,draw, scale=1.2,right=1cm of c'',label=center:{$\bullet$}] (a11);
\coordinate[circle,below right=1cm of a11,label=right:{$\quad$}] (a11');
\coordinate[circle,below left=1cm of a11,label=right:{$\quad$}] (a11'');
\coordinate[circle,above right=2cm of a11,label=above:{$s(\nu_{5})$},label=center:{$\bullet$}] (a12);
\coordinate[circle,fill=black!20,draw, scale=1.2,below right=2cm of a12,label=above:{$\quad$},label=center:{$\bullet$}] (a13);
\coordinate[circle,below right=1cm of a13,label=below:{$\quad$}] (a13');
\coordinate[circle,below left=1cm of a13,label=below:{$\quad$}] (a13'');
\coordinate[circle,above right=2cm of a13,label=above:{$s(\nu_{3})$},label=center:{$\bullet$}] (a14);
\coordinate[circle,fill=black!20,draw, scale=1.2,below right=2cm of a14,label=above:{$\quad$},label=center:{$\bullet$}] (a15);
\coordinate[circle,below right=1cm of a15,label=below:{$\quad$}] (a15');
\coordinate[circle,below left=1cm of a15,label=below:{$\quad$}] (a15'');
\coordinate[circle,above right=2cm of a15,label=above:{$s(\nu_{1})$},label=center:{$\bullet$}] (a16);
\coordinate[circle,below right=2cm of a16,label=above:{$\quad$},label=center:{$\bullet$}] (a17);
\coordinate[circle,below right=1cm of a17,label=below:{$\quad$}] (a17');

\draw[->,decorate,decoration={snake,amplitude=.4mm,segment length=2mm}] 
(a11)-- node [anchor=south west,scale=.9]{$\alpha_{6}$} (a11');
\draw[->,decorate,decoration={snake,amplitude=.4mm,segment length=2mm}] 
(a11)-- node [anchor=south east,scale=.9]{$\beta_{6}$} (a11'');
\draw[->,decorate,decoration={snake,amplitude=.4mm,segment length=2mm}] 
(a12)-- node [anchor=south east,scale=.9]{$\nu_{5}$} (a11);
\draw[->,decorate,decoration={snake,amplitude=.4mm,segment length=2mm}] 
(a12)-- node [anchor=south west,scale=.9]{$\nu_{4}$} (a13);
\draw[->,decorate,decoration={snake,amplitude=.4mm,segment length=2mm}] 
(a13)-- node [anchor=south west,scale=.9]{$\alpha_{4}$} (a13');
\draw[->,decorate,decoration={snake,amplitude=.4mm,segment length=2mm}] 
(a13)-- node [anchor=south east,scale=.9]{$\beta_{4}$} (a13'');
\draw[->,decorate,decoration={snake,amplitude=.4mm,segment length=2mm}] 
(a14)-- node [anchor=south east,scale=.9]{$\nu_{3}$} (a13);
\draw[->,decorate,decoration={snake,amplitude=.4mm,segment length=2mm}] 
(a14)-- node [anchor=south west,scale=.9]{$\nu_{2}$} (a15);
\draw[->,decorate,decoration={snake,amplitude=.4mm,segment length=2mm}] 
(a15)-- node [anchor=south west,scale=.9]{$\alpha_{2}$} (a15');
\draw[->,decorate,decoration={snake,amplitude=.4mm,segment length=2mm}] 
(a15)-- node [anchor=south east,scale=.9]{$\beta_{2}$} (a15'');
\draw[->,decorate,decoration={snake,amplitude=.4mm,segment length=2mm}] 
(a15)-- node [anchor=south east,scale=.9]{$\nu_{1}$} (a16);
\draw[->] (a16)-- node [anchor=south west,scale=.9]{$\overline{b}$} (a17);
\draw[->,decorate,decoration={snake,amplitude=.4mm,segment length=2mm}] 
(a17)-- node [anchor=south west,scale=.9]{$\alpha_{0}$} (a17');

\draw [line width=15pt,opacity=0.15,black,line cap=round,rounded corners] (a11.center) -- (a12.center) -- (a13.center)--(a14.center)--(a15.center)--(a16.center);



\coordinate[circle,fill=black!20,draw, scale=1.2,below=4cm of a1,label=center:{$\bullet$}] (b1);
\coordinate[circle,below right=1cm of b1,label=right:{$\quad$}] (b1');
\coordinate[circle,below left=1cm of b1,label=right:{$\quad$}] (b1'');
\coordinate[circle,above left=2cm of b1,label=above:{$\quad$},label=center:{$\bullet$}] (b2);
\coordinate[circle,fill=black!20,draw, scale=1.2,below left=2cm of b2,label=above:{$\quad$},label=center:{$\bullet$}] (b3);
\coordinate[circle,below right=1cm of b3,label=below:{$\quad$}] (b3');
\coordinate[circle,below left=1cm of b3,label=below:{$\quad$}] (b3'');
\coordinate[circle,above left=2cm of b3,label=above:{$\quad$},label=center:{$\bullet$}] (b4);
\coordinate[circle,below left=2cm of b4,label=below:{$\quad$},label=center:{$\bullet$}] (b5);
\coordinate[circle,below left=2cm of b5,label=below:{$\quad$},label=center:{$\bullet$}] (b6);
\coordinate[circle,below left=1cm of b6,label=below:{$\quad$}] (b6'');

\draw[->,decorate,decoration={snake,amplitude=.4mm,segment length=2mm}] 
(b1)-- node [anchor=south west,scale=.9]{$\alpha_{n-2}$} (b1');
\draw[->,decorate,decoration={snake,amplitude=.4mm,segment length=2mm}] 
(b1)-- node [anchor=south east,scale=.9]{$\beta_{n-2}$} (b1'');
\draw[->,decorate,decoration={snake,amplitude=.4mm,segment length=2mm}] 
(b2)-- node [anchor=south west,scale=.9]{$\nu_{n-2}$} (b1);
\draw[->,decorate,decoration={snake,amplitude=.4mm,segment length=2mm}] 
(b2)-- node [anchor=south east,scale=.9]{$\nu_{n-1}$} (b3);
\draw[->,decorate,decoration={snake,amplitude=.4mm,segment length=2mm}] 
(b3)-- node [anchor=south west,scale=.9]{$\alpha_{n}$} (b3');
\draw[->,decorate,decoration={snake,amplitude=.4mm,segment length=2mm}] 
(b3)-- node [anchor=south east,scale=.9]{$\beta_{n}$} (b3'');
\draw[->,decorate,decoration={snake,amplitude=.4mm,segment length=2mm}] 
(b4)-- node [anchor=south west,scale=.9]{$\nu_{n}$} (b3);
\draw[<-]  (b5)--node [anchor=south east,scale=.9]{$a$}(b4); 
\draw[->,decorate,decoration={snake,amplitude=.4mm,segment length=2mm}] 
(b5)-- node [anchor=south east,scale=.9]{$\mu_{1}$} (b6);
\draw[->,decorate,decoration={snake,amplitude=.4mm,segment length=2mm}] 
(b6)-- node [anchor=south east,scale=.9]{$\delta_{2}$} (b6'');

\draw [line width=15pt,opacity=0.15,black,line cap=round,rounded corners] (b1.center) -- (b2.center) -- (b3.center)--(b4.center);


\coordinate[circle,above right=.5cm of b1, label=above:{$\quad$}] (bc');
\coordinate[circle,right=1cm of bc', label=above:{$\cdots$}] (bc);
\coordinate[circle,below right=.5cm of bc, label=above:{$\quad$}] (bc'');

\coordinate[circle,fill=black!20,draw, scale=1.2,right=1cm of bc'',label=center:{$\bullet$}] (b11);
\coordinate[circle,below right=1cm of b11,label=right:{$\quad$}] (b11');
\coordinate[circle,below left=1cm of b11,label=right:{$\quad$}] (b11'');
\coordinate[circle,above right=2cm of b11,label=above:{$s(\nu_{5})$},label=center:{$\bullet$}] (b12);
\coordinate[circle,fill=black!20,draw, scale=1.2,below right=2cm of b12,label=above:{$\quad$},label=center:{$\bullet$}] (b13);
\coordinate[circle,below right=1cm of b13,label=below:{$\quad$}] (b13');
\coordinate[circle,below left=1cm of b13,label=below:{$\quad$}] (b13'');
\coordinate[circle,above right=2cm of b13,label=above:{$\quad$},label=center:{$\bullet$}] (b14);
\coordinate[circle,fill=black!20,draw, scale=1.2,below right=2cm of b14,label=above:{$\quad$},label=center:{$\bullet$}] (b15);
\coordinate[circle,below right=1cm of b15,label=below:{$\quad$}] (b15');
\coordinate[circle,below left=1cm of b15,label=below:{$\quad$}] (b15'');
\coordinate[circle,above right=2cm of b15,label=above:{$\quad$},label=center:{$\bullet$}] (b16);
\coordinate[circle,below right=2cm of b16,label=above:{$\quad$},label=center:{$\bullet$}] (b17);
\coordinate[circle,below right=1cm of b17,label=below:{$\quad$}] (b17');

\draw[->,decorate,decoration={snake,amplitude=.4mm,segment length=2mm}] 
(b11)-- node [anchor=south west,scale=.9]{$\alpha_{6}$} (b11');
\draw[->,decorate,decoration={snake,amplitude=.4mm,segment length=2mm}] 
(b11)-- node [anchor=south east,scale=.9]{$\beta_{6}$} (b11'');
\draw[->,decorate,decoration={snake,amplitude=.4mm,segment length=2mm}] 
(b12)-- node [anchor=south east,scale=.9]{$\nu_{5}$} (b11);
\draw[->,decorate,decoration={snake,amplitude=.4mm,segment length=2mm}] 
(b12)-- node [anchor=south west,scale=.9]{$\nu_{4}$} (b13);
\draw[->,decorate,decoration={snake,amplitude=.4mm,segment length=2mm}] 
(b13)-- node [anchor=south west,scale=.9]{$\alpha_{4}$} (b13');
\draw[->,decorate,decoration={snake,amplitude=.4mm,segment length=2mm}] 
(b13)-- node [anchor=south east,scale=.9]{$\beta_{4}$} (b13'');
\draw[->,decorate,decoration={snake,amplitude=.4mm,segment length=2mm}] 
(b14)-- node [anchor=south east,scale=.9]{$\nu_{3}$} (b13);
\draw[->,decorate,decoration={snake,amplitude=.4mm,segment length=2mm}] 
(b14)-- node [anchor=south west,scale=.9]{$\nu_{2}$} (b15);
\draw[->,decorate,decoration={snake,amplitude=.4mm,segment length=2mm}] 
(b15)-- node [anchor=south west,scale=.9]{$\alpha_{2}$} (b15');
\draw[->,decorate,decoration={snake,amplitude=.4mm,segment length=2mm}] 
(b15)-- node [anchor=south east,scale=.9]{$\beta_{2}$} (b15'');
\draw[->,decorate,decoration={snake,amplitude=.4mm,segment length=2mm}] 
(b15)-- node [anchor=south east,scale=.9]{$\nu_{1}$} (b16);
\draw[->] (b16)-- node [anchor=south west,scale=.9]{$\overline{b}$} (b17);
\draw[->,decorate,decoration={snake,amplitude=.4mm,segment length=2mm}] 
(b17)-- node [anchor=south west,scale=.9]{$\alpha_{0}$} (b17');

\draw [line width=15pt,opacity=0.15,black,line cap=round,rounded corners] (b11.center) -- (b12.center) -- (b13.center)--(b14.center)--(b15.center)--(b16.center);


\draw[transform canvas={xshift=-1.5pt},color=red] (a1) -- (b1);
\draw[transform canvas={xshift=1.5pt},color=teal] (a1) -- (b1);

\draw[transform canvas={xshift=-1.5pt},color=teal] (a1'.center) -- (b1'.center);
\draw[transform canvas={xshift=1.5pt},color=teal] (a1'.center) -- (b1'.center);

\draw[transform canvas={xshift=-1.5pt},color=teal] (a2.center) -- (b2.center);
\draw[transform canvas={xshift=1.5pt},color=teal] (a2.center) -- (b2.center);

\draw[transform canvas={xshift=-1.5pt},color=red] (a3) -- (b3);
\draw[transform canvas={xshift=1.5pt},color=teal] (a3) -- (b3);

\draw[transform canvas={xshift=-1.5pt},color=red] (a3'.center) -- (b3'.center);
\draw[transform canvas={xshift=1.5pt},color=red] (a3'.center) -- (b3'.center);

\draw[transform canvas={xshift=-1.5pt},color=teal] (a3''.center) -- (b3''.center);
\draw[transform canvas={xshift=1.5pt},color=teal] (a3''.center) -- (b3''.center);

\draw[transform canvas={xshift=-1.5pt},color=red] (a4.center) -- (b4.center);
\draw[transform canvas={xshift=1.5pt},color=red] (a4.center) -- (b4.center);

\draw[transform canvas={xshift=-1.5pt},color=red] (a5.center) -- (b5.center);
\draw[transform canvas={xshift=1.5pt},color=red] (a5.center) -- (b5.center);

\draw[transform canvas={xshift=-1.5pt},color=red] (a6.center) -- (b6.center);
\draw[transform canvas={xshift=1.5pt},color=red] (a6.center) -- (b6.center);

\draw[transform canvas={xshift=-1.5pt},color=red] (a6''.center) -- (b6''.center);
\draw[transform canvas={xshift=1.5pt},color=red] (a6''.center) -- (b6''.center);

\draw[transform canvas={xshift=-1.5pt},color=red] (a11) -- (b11);
\draw[transform canvas={xshift=1.5pt},color=teal] (a11) -- (b11);

\draw[transform canvas={xshift=-1.5pt},color=red] (a11''.center) -- (b11''.center);
\draw[transform canvas={xshift=1.5pt},color=red] (a11''.center) -- (b11''.center);

\draw[transform canvas={xshift=-1.5pt},color=red] (a12.center) -- (b12.center);
\draw[transform canvas={xshift=1.5pt},color=red] (a12.center) -- (b12.center);

\draw[transform canvas={xshift=-1.5pt},color=red] (a13) -- (b13);
\draw[transform canvas={xshift=1.5pt},color=teal] (a13) -- (b13);

\draw[transform canvas={xshift=-1.5pt},color=red] (a13'.center) -- (b13'.center);
\draw[transform canvas={xshift=1.5pt},color=red] (a13'.center) -- (b13'.center);

\draw[transform canvas={xshift=-1.5pt},color=teal] (a13''.center) -- (b13''.center);
\draw[transform canvas={xshift=1.5pt},color=teal] (a13''.center) -- (b13''.center);

\draw[transform canvas={xshift=-1.5pt},color=teal] (a14.center) -- (b14.center);
\draw[transform canvas={xshift=1.5pt},color=teal] (a14.center) -- (b14.center);

\draw[transform canvas={xshift=-1.5pt},color=red] (a15''.center) -- (b15''.center);
\draw[transform canvas={xshift=1.5pt},color=red] (a15''.center) -- (b15''.center);

\draw[transform canvas={xshift=-1.5pt},color=red] (a15.center) -- (b15.center);
\draw[transform canvas={xshift=1.5pt},color=teal] (a15.center) -- (b15.center);

\draw[transform canvas={xshift=-1.5pt},color=teal] (a15'.center) -- (b15'.center);
\draw[transform canvas={xshift=1.5pt},color=teal] (a15'.center) -- (b15'.center);

\draw[transform canvas={xshift=-1.5pt},color=red] (a16.center) -- (b16.center);
\draw[transform canvas={xshift=1.5pt},color=red] (a16.center) -- (b16.center);

\draw[transform canvas={xshift=-1.5pt},color=red] (a17.center) -- (b17.center);
\draw[transform canvas={xshift=1.5pt},color=red] (a17.center) -- (b17.center);

\draw[transform canvas={xshift=-1.5pt},color=red] (a17'.center) -- (b17'.center);
\draw[transform canvas={xshift=1.5pt},color=red] (a17'.center) -- (b17'.center);

\end{tikzpicture}}
\caption{The strings shaded in grey are the cohomology of $\Q_\alpha$ and $\Q_\sigma$ in degree $0$. The identity maps between projective modules are indicated in alternating red and green colour. The induced map between cohomology modules is the canonical factor map $M(wav) \onto M(v)$.}
\label{fig:cohomology-map}
\end{figure}
Taking the long exact cohomology sequence associated to the triangle \eqref{triangle} gives a short exact sequence
\[
0 \too M(w) \rightlabel{H^0(\h)} M(w a v) \rightlabel{H^0(\g)} M(v) \too 0,
\]
in which $H^0(\g)$ is the canonical map, whence it follows immediately that $H^0(\h)$ is also the canonical map associated to the obvious substring/factor string decomposition. 

It now follows that $\f$ induces an arrow extension corresponding to the arrow induced by $a$, where the  middle term  of the extension is given by  the string module $M(w a v)$.

\smallskip
\noindent \textbf{Case 2:} \textit{Both $v_k$ and $w_1$ are inverse.}
\smallskip
 
We have the following unfolded diagram
\[ 
\xymatrix@!R=5px{
\Q_\sigma \colon \ar[d]_-{\f} & \ar@{.}[r] & \xydot \ar[r]^-{\theta_3} \ar@{=}[d] & \xydot \ar[r]^-{\theta_2} \ar@{=}[d] & \xydot \ar[r]^-{a} \ar[d]^-{\bar{w}_1 \cdots \bar{w}_j}  & \xydot   & \ar[l]_-{v_k \cdots v_i} \xydot \ar[r]  & \xydot       \ar@{~}[r]   & \\
\Sigma \Q_{\bar{\tau}} \colon      & \ar@{.}[r] & \xydot \ar[r]_-{\bar{\theta}'_2 = \theta_3}                 & \xydot \ar[r]_-{\bar{a}' \bar{w}_1 \cdots \bar{w}_j}                   & \xydot                                                       & \xydot     \ar[l]        \ar@{~}[r]        & 
}
\]
where $1 \leq i \leq k$ and $1 \leq j \leq l$. Then $a'=\theta_1$ and
by \cite[Thm. 2.2]{CPS} the homotopy string of the mapping cone of $\f$ is given by $\alpha= \rho' b'  w a v b \rho$ where $ \phi = b \rho  $ and $\phi' = \rho' b'$ with $\bar{b}, b' \in Q_1$ such that $b' w a v b$ is a string.
As in Case 1 above, one can check that the map $H^0(\g) \colon M(wav) \to M(v)$ is the canonical map given by the obvious substring/factor string decomposition. It then follows that, taking cohomology, $\f$ induces an arrow extension, corresponding to the arrow $a$, whose middle term is $ M(wav)$.

\smallskip
\noindent \textbf{Case 3:} \textit{Both $v_k$ and $w_1$ are direct.}
\smallskip

This case is similar to case 1. We have the following unfolded diagram
\[ 
\xymatrix@!R=5px{
\Q_\sigma \colon \ar[d]_-{\f} & \ar@{.}[r] & \xydot \ar[r]^-{\theta_3} \ar@{=}[d] & \xydot \ar[r]^-{\theta_3} \ar@{=}[d] & \xydot \ar[r]^-{av_k \cdots v_i } \ar@{=}[d]  & \xydot   & \ar[l]  \xydot       \ar@{~}[r]   & \\
\Sigma \Q_{\bar{\tau}} \colon      & \ar@{.}[r] & \xydot \ar[r]_-{\bar{\theta}'_2 = \theta_3}                & \xydot \ar[r]_-{\bar{a}' = \theta_2}                   & \xydot                  & \xydot \ar[l]^-{\bar{w}_1 \cdots \bar{w}_j}                            \ar[r]                     & \xydot       \ar@{~}[r]        & 
}
\]
where $1 \leq i \leq k$ and $1 \leq j  \leq l$. Then as above the cohomology of the mapping cone induces an arrow extension corresponding to the arrow $a$.

\smallskip
\noindent \textbf{Case 4:} \textit{$v_k$ is direct and $w_1$ is inverse.}
\smallskip

This case is similar to case 2. We have the following unfolded diagram
\[ 
\xymatrix@!R=5px{
\Q_\sigma \colon \ar[d]_-{\f} & \ar@{.}[r] & \xydot \ar[r]^-{\theta_3} \ar@{=}[d] & \xydot \ar[r]^-{\theta_2} \ar@{=}[d] & \xydot \ar[r]^-{av_k \cdots v_i } \ar[d]^-{\bar{w}_1 \cdots \bar{w}_j}  & \xydot   & \ar[l]  \xydot       \ar@{~}[r]   & \\
\Sigma \Q_{\bar{\tau}} \colon      & \ar@{.}[r] & \xydot \ar[r]_-{\bar{\theta}'_1 = \theta_3}                 & \xydot \ar[r]_-{\bar{a}' \bar{w}_1 \cdots \bar{w}_j}                  & \xydot                                                       & \xydot     \ar[l]        \ar@{~}[r]        & 
}
\]
where $1 \leq i \leq k$ and $1 \leq j  \leq l$. Then as above the cohomology of the mapping cone induces an arrow extension corresponding to the arrow $a$. 

\smallskip
\noindent \textbf{Case 5:} \textit{$v$ or $w$ or both are trivial.}
\smallskip

If $v$ is trivial but $w$ is not, this is a degenerate case of Case 1 or 2. If $v$ is not trivial but $w$ is, this is a degenerate case of Case 1 or 3. If both $v$ and $w$ are trivial, this is a degenerate case of Case 1.
\end{proof}

\begin{lemma}\label{lem:graph map single degree}
Let $\f \colon \Q_{\sigma} \to \Sigma \Q_{\tau}$ be a graph map  and let $\nu$ in $ \sigma$ and $\omega$ in $ \tau$ be  the maximal alternating homotopy substrings corresponding to the module parts of $\sigma$ and $\tau$ respectively. Suppose that $\f$ is supported in projective modules lying in $\nu$  and $\omega$.  Then $\f$ is supported in a single indecomposable projective $\Lambda$-module $P$ in degree $-1$ unless it is incident with antipaths in both $\Q_\sigma$ and $\Sigma \Q_\tau$. 

Furthermore, $\Phi(\f)$ gives rise to either an arrow extension or an overlap extension where the overlap is given by the  simple $\Lambda$-module $P / \rad(P)$.
\end{lemma}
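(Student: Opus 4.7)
The plan is to first constrain the shape of $\f$ by a cohomological-degree argument showing that the common substring $\rho$ underlying $\f$ must be a single vertex in degree $-1$, and then to compute the cohomology of the mapping cone using Theorem~\ref{thm:cohomology} in order to identify $\Phi(\f)$ with an overlap (or arrow) extension whose overlap module is $P/\rad P$.

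For the first part, by Remark~\ref{rem:observations}(\ref{degrees}) the module part $\nu$ of $\pi(v)$ sits in cohomological degrees $\{-1,0\}$ of $\Q_{\pi(v)}$, with its homotopy letters providing components of the differential $P^{-1}\to P^{0}$; similarly $\omega\subseteq\pi(w)$ sits in $\{-1,0\}$ of $\Q_{\pi(w)}$, so in $\Sigma\Q_{\pi(w)}$ it sits in $\{-2,-1\}$. The only cohomological degree common to the module parts of $\sigma$ and $\Sigma\tau$ is therefore $-1$. Since the common homotopy substring $\rho$ of a graph map matches projectives via identity maps, if $\f$ is supported in the module parts then every projective of $\rho$ must lie in degree $-1$. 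Because each non-trivial homotopy letter spans two consecutive cohomological degrees, $\rho$ cannot contain any homotopy letters and must therefore be the trivial homotopy string $1_x$ for some $x\in Q_0$; hence $\f$ is supported at the single indecomposable projective $P=P(x)$. The sole exception is when $\rho$ extends simultaneously into the antipath parts of both $\sigma$ and $\Sigma\tau$, which is exactly the case of $\f$ being incident with antipaths in both $\Q_\sigma$ and $\Sigma\Q_\tau$.

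Next I would read off the combinatorial meaning of $x$. The projective $P(x)$ occupies degree $-1$ of $\nu$ precisely when $x$ is a valley of $v$, which by Corollary~\ref{cor:projective-resolution} yields a decomposition $v=v_L\bar{B}Av_R$ with $A,B\in Q_1$ both ending at $x$ (and $v_L,v_R$ possibly trivial); dually, $P(x)$ occupies degree $0$ of $\omega$ precisely when $x$ is a peak of $w$, giving $w=w_L D\bar{C}w_R$ with $C,D\in Q_1$ both starting at $x$. The maximality conditions $\sigma_L\neq\tau_L$, $\sigma_R\neq\tau_R$ together with the commutativity of the endpoint squares $(*)$ and $(**)$ of diagram~\eqref{finite-graph-map} translate precisely to the hypothesis of the overlap extension of Theorem~\ref{thm:strings}(2) with trivial overlap $m=1_x$: that $v$ and $w$ do not both start at $x$ or both end at $x$. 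The associated overlap module is then $M(1_x)=S(x)=P/\rad P$, as required.

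Finally, by~\cite[\S4.3]{CPS} the homotopy string of the mapping cone $\Q_\alpha$ of $\f$ is obtained by splicing $\sigma$ and $\Sigma\tau$ at the glue point $P(x)$, yielding generically the pair of homotopy strings associated to $u=w_L DAv_R$ and $u'=v_L\bar{B}\bar{C}w_R$. Applying Theorem~\ref{thm:cohomology}---specifically the maximal alternating substring rule (Lemma~\ref{lem:max-alternating}) together with the appropriate cokernel/kernel/combined endpoint rules---computes the degree-$0$ cohomology $H^0(\Q_\alpha)\cong M(u)\oplus M(u')$, matching the middle term of the overlap extension. The long exact cohomology sequence of the triangle $\Q_\tau\to\Q_\alpha\to\Q_\sigma\to\Sigma\Q_\tau$, handled as at the end of the proof of Lemma~\ref{Graph map in antipaths}, then identifies $\Phi(\f)$ with this overlap extension. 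In certain degenerate endpoint configurations---when one of $v_L,v_R,w_L,w_R$ is trivial and the graph map carries a nontrivial boundary component $f_L$ or $f_R$---one of $M(u),M(u')$ collapses and $\Phi(\f)$ takes the form of an arrow extension through an arrow meeting $x$. The main obstacle is this final case analysis, in which the mapping-cone formula and ensuing cohomology computation must be tracked across all endpoint configurations of $\sigma$ and $\tau$ and across the overlap/arrow dichotomy; the uniformity afforded by the concentration of $\f$ at a single vertex in degree $-1$ reduces this to a finite local check.
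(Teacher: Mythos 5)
Your overall route (degree argument forcing support at a single projective $P(x)$ in degree $-1$, then the CPS mapping cone plus Theorem~\ref{thm:cohomology} and the long exact sequence to identify $\Phi(\f)$) is the same as the paper's, but there are two genuine gaps. First, your ``dual'' claim on the $w$-side is false: a projective $P(x)$ in degree $0$ of the module part of $\pi(w)$ need not be an interior peak, so it does \emph{not} follow that both $C$ and $D$ exist (take $w=D$ a single arrow with $x=s(D)$: then $P(x)$ sits in degree $0$ of the module part with $\bar{C}$, $w_R$ absent). The asymmetry is real, because the graph map forces degree $-1$ in $\pi(v)$ but degree $0$ in $\pi(w)$, and only the former forces an interior valley. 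Consequently your assertion that the graph-map endpoint conditions ``translate precisely'' into the non-splitness hypothesis of Theorem~\ref{thm:strings}(2) is not established. What actually needs proving --- and is the heart of the paper's argument --- is that the split configurations (both $A,C$ absent, or both $B,D$ absent) cannot support such a graph map. The paper does this by a gentleness/Corollary~\ref{cor:projective-resolution} analysis: if, say, $A$ is missing, then in $\pi(v)$ the letter $\bar{B}$ either merges with the incident antipath into a longer homotopy letter or is removed as part of a maximal inverse prefix, so $P(x)$ is not a module-part vertex in degree $-1$, contradicting the assumed support of $\f$. You state the $v$-side conclusion (``valley, hence both $A,B$ exist'') but give no argument for it --- citing Corollary~\ref{cor:projective-resolution} alone does not do it, since the corollary by itself does not say which vertices survive as projectives of the complex --- and you also do not treat the cone computation in the legitimate degenerate subcases where only one of $C,D$ exists.

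Second, your mechanism for the arrow-extension alternative is not the right one. In the paper, arrow extensions do not come from ``$v_L$ or $w_L$ trivial with a nontrivial boundary component $f_L$ or $f_R$''; they come from the case where the graph map meets an antipath: one must show that if $\f$ is incident with an antipath of $\Q_{\pi(v)}$ and the module part of $\Sigma\Q_{\pi(w)}$, then (since antipath letters have length one and a non-commuting endpoint would contradict $\f$ being a graph map) the adjacent letters are forced to agree, so $\f$ extends along both antipaths and one is exactly in the setup of Lemma~\ref{Graph map in antipaths}, yielding an arrow extension; and that the opposite mixed case is impossible for degree reasons. Your proposal mentions the ``both antipaths'' exception in the first part but never carries out this reduction, and the substitute explanation you offer for when arrow extensions appear is incorrect. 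Both points need to be repaired for the proof to be complete.
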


\begin{proof}
There are three cases to be considered. 

\smallskip
\noindent \textbf{Case 1:} \textit{$\f$ is supported in $\nu$, and $\f$ is not incident with any antipath of $\sigma$.}
\smallskip

In this case, we must have at least one isomorphism between projective modules in degree $-1$ as follows
\begin{equation}\label{eq:graph map single support}
\xymatrix@!R=5px{
 & & \text{\footnotesize 0} &  \text{\footnotesize -1} &  \text{\footnotesize 0} & \\
\Q_{\sigma} \colon \ar[d]_-{\f} &   \ar@{~}[r]             & \xydot            & x    \ar[l]_-{\nu_i}     \ar[r]^-{\nu_{i-1}}     \ar@{=}[d]                                                & \xydot                      \ar@{~}[r]  & \\
\Sigma \Q_{\tau} \colon      &   \ar@{~}[r]             & \xydot \ar[r]_-{\omega_j}                & x                                                      & \xydot     \ar[l]^-{\omega_{j-1}}         \ar@{~}[r]        & \\
& & \text{\footnotesize -2} & \text{\footnotesize -1} & \text{\footnotesize -2} & 
}
\end{equation}
where $x \in Q_0$.
Since the projectives in $\nu$ as a substring of  $\sigma$ are in cohomological degrees $0$ and $-1$ and the projectives in $\omega$ as a substring of the homotopy string corresponding to $\Sigma \Q_{\tau}$  are in degrees $-1$ and $-2$, the graph map $\f$ can only be supported in a single degree, as shown. By reversing the orientation on $\tau$ if necessary, we may assume that $\sigma$ and $\tau$ are compatibly oriented in the sense of \cite[Def 2.1]{CPS}.

Now, the homotopy letters $\nu_{i-1}$, $\nu_i$, $\omega_{j-1}$ and $\omega_j$ have the form $\nu_{i-1} = A\nu'_{i-1}$, $\nu_i = \nu'_i \bar{B}$, $\omega_{j-1} = \bar{D}\omega'_{j-1}$ and $\omega_j = \omega'_j C$, where $A, B, C, D \in Q_1$ and the primed symbols are homotopy subletters.  
Then $v= v_L \bar{B} A v_R$ and $w = w_L D \bar{C} w_R$  where  $v_L, v_R$ are (possibly trivial) subwords of $v$ and $w_L, w_R$ are (possibly trivial) subwords of $w$.
Set $x = e(A) \,  (= e(\bar{B}) = s(\bar{C}) = s(D))$. 
We wish to verify we have an overlap extension in which $m = 1_x$ in the sense of Definition~\ref{def:extensions}\ref{overlap}.

First observe that whenever $A,B,C,D$ exist, the fact that diagram~\eqref{eq:graph map single support} is compatibly oriented means that $CA \in I$ and $BD \in I$. We now need to check that if $A = \emptyset$ then $C \neq \emptyset$ and if $B = \emptyset$ then $D \neq \emptyset$. We check the first condition, the second is analogous.

Suppose that $C = \emptyset$, i.e. $w = w_L D $. If $A = \emptyset$ then $v = v_L \bar{B}$. There must be an arrow $a \in Q_1$ such that $v \bar{a}$ is defined as a string, otherwise by Corollary~\ref{cor:projective-resolution}, $\bar{B}$ would be removed as a maximal inverse prefix and the situation depicted in the unfolded diagram~\eqref{eq:graph map single support} would not occur. However, in this case the homotopy letters $\nu_i = \nu'_i \bar{B} \bar{a}$ and $\nu_{i-1}$ must be inverse or empty, again taking us outside the situation occurring in diagram~\eqref{eq:graph map single support}. Hence, we must have $A \neq \emptyset$, as required. A similar argument shows that if $B = \emptyset$ then $D \neq \emptyset$. We have thus verified that the conditions for an overlap extension in Definition~\ref{def:extensions}\ref{overlap} hold.

Finally, by \cite[\S 2]{CPS} the mapping cone of $\f$ is a direct sum of the projective resolutions of the $\Lambda$-modules $M(u)$ and $M(u')$ where $u = w_L D  e_x A v_R$ and $u' = v_L \bar{B} e_x \bar{C} w_R$. Taking cohomology and checking the maps in the corresponding triangle as in the proof of Lemma~\ref{Graph map in antipaths} then shows that $\f$ gives rise to an overlap extension in the simple $\Lambda$-module $S(x)$. 

\smallskip
\noindent \textbf{Case 2:} \textit{ $\f$ is incident with an antipath in $\Q_{\sigma}$ and the module part in $\Sigma \Q_{\tau}$.}
\smallskip

In this case we obtain the following diagram for $\f$.
\[ 
\xymatrix@!R=5px{
 & & \text{\footnotesize -3} &  \text{\footnotesize -2} &  \text{\footnotesize -1} & \\
\Q_{\sigma} \colon \ar[d]_-{\f} &   \ar@{~}[r]            & \xydot \ar[r]^-{\theta_2} & \xydot \ar[r]^-{\theta_1}               & \xydot         \ar[r]^-{a}    \ar@{=}[d]                                                & \xydot                  \ar@{~}[r]  & \\
\Sigma \Q_{\tau} \colon      &   \ar@{~}[r]         & \xydot \ar@{-}[r]_-{\phi'_1}        & \xydot \ar[r]_-{\omega_j}                & \xydot                                                       & \xydot     \ar[l]^-{\omega_{j-1}}         \ar@{~}[r]        & \\
& & & \text{\footnotesize -2} & \text{\footnotesize -1} & 
}
\]
Since $\theta_1$ is a homotopy letter of length 1, in order to obtain a graph map supported in more than one degree, we must have  $\theta_1 = \omega_j$. If $\phi'_1$ is inverse or empty then we reach a non-commuting endpoint condition as defined in Section~\ref{subsec:quasi-graph}, see also \cite[\S 1.4.4]{CPS} for more details on non-commuting endpoint conditions. This contradicts the fact that $\f$ is a graph map. Thus $\phi'_1$ must be direct and $\phi'_1 = \theta_{2}$. We are therefore in the setup of Lemma~\ref{Graph map in antipaths} and the corresponding extension in the module category is an arrow extension. 

\smallskip
\noindent \textbf{Case 3:} \textit{$\f$ is incident with the module part in $\Q_{\sigma}$ and an antipath in $\Sigma \Q_{\tau}$}
\smallskip

This case cannot happen for degree reasons.
\end{proof}

\begin{remark}
Suppose one of $v$ or $w$ is a band, in which case $\sigma$ or $\tau$ is a homotopy band. A priori, one might expect that there may be a graph map $\f \colon \Q_{\sigma} \to \Sigma \Q_{\tau}$ determined by an overlap which is longer than $\sigma$ or $\tau$. However, Lemmas~\ref{Graph map in antipaths} and \ref{lem:graph map single degree} ensure that this situation can never occur. This makes sense: in  \cite[\S 1]{Addendum} such a situation corresponds to a `shortening' of the homotopy string or homotopy band determining the mapping cone relative to those occurring in the domain and target of the map. This would correspond to having a middle term of an extension of smaller dimension than the sum of the outer terms. 
\end{remark}

\subsection{Quasi-graph maps}\label{sec:quasi-graph maps}

In this section we consider a quasi-graph map $\phi \colon \Q_\sigma \wiggle \Q_\tau$, corresponding to a homotopic family of single and double maps in the basis of $\Hom_{\KminusL}(\Q_\sigma, \Sigma \Q_\tau)$; see \cite[Def. 3.12]{ALP}.

We start by placing a restriction on the cohomological degrees in which a quasi-graph map $\phi \colon \Q_\sigma \wiggle \Q_\tau$ can be supported.

\begin{lemma} \label{lem:quasi-degrees}
Under the hypotheses of Setup~\ref{setup:quasi}, a quasi-graph map $\phi \colon \Q_\sigma \rightsquigarrow \Q_\tau$ is supported in cohomological degrees $-1$ and $0$ only.
\end{lemma}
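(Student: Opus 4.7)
The plan is to show by contradiction that the common homotopy substring $\rho$ defining the quasi-graph map $\phi$ cannot contain a letter with a projective endpoint in degree $d \leq -2$. The key structural input is Remark~\ref{rem:observations}(\ref{lengths}): any homotopy letter of $\sigma = \pi(v)$ or $\tau = \pi(w)$ of length greater than one has both projective endpoints in degrees $\{-1,0\}$. Combining this with the explicit form of projective resolutions from Corollary~\ref{cor:projective-resolution}, the module parts of $\pi(v)$ and $\pi(w)$ live in degrees $\{-1,0\}$, so any letter with a projective endpoint in degree $d \leq -2$ must be a length-one letter lying in one of the antipath parts of $\pi(v)$ (respectively $\pi(w)$).

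Assume for contradiction that such a letter appears in $\rho$. Since $\rho$ is a common substring appearing in the same cohomological degrees in both $\sigma$ and $\tau$, this letter lies in matching antipath parts of $\sigma$ and $\tau$; without loss of generality, in the right inverse antipaths $\inverse(a)$ and $\inverse(a')$. The gentle algebra axioms make each antipath rigid: given an inverse letter $\bar{c}$ in an inverse antipath, the next inverse letter is forced to be the unique $\bar{c}'$ with $cc'$ having a subpath in $I$. Hence the antipaths of $\sigma$ and $\tau$ must coincide letter-by-letter from the shared letter outwards, and by maximality of $\rho$ as a common homotopy substring, $\rho$ extends to infinity on that side. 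We are therefore in the infinite case of diagram~\eqref{infinite-graph-map}.

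The main obstacle is then to show that in this infinite case the unique finite endpoint of $\rho$ automatically satisfies the commuting condition of Section~\ref{sec:graph-maps}, so that the diagram defines a graph map rather than a quasi-graph map, contradicting the hypothesis on $\phi$. Concretely, the finite endpoint of $\rho$ sits at the transition between the matched infinite antipath and the rest of $\sigma$ and $\tau$; by the alternating structure of the module part of $\pi$ and the fact that the antipath is inverse, the letters $\sigma_R$ and $\tau_R$ adjacent to $\rho$ on the finite side are direct in both strings. A case analysis of the possible forms of these letters, using the explicit structure of $\pi(v)$ and $\pi(w)$ together with the uniqueness afforded by the gentle axioms, forces the endpoint square $(**)$ to commute. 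Once this commutativity check is carried out, the contradiction is reached and we conclude that $\rho$ lies entirely within the module parts of $\sigma$ and $\tau$, hence in cohomological degrees $\{-1,0\}$, as required.
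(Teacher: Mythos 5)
Your first two steps are essentially sound and parallel the paper's opening moves: support in degree $\leq -2$ can only occur in the antipath parts of $\pi(v)$ and $\pi(w)$ (the paper should also make you dispose of band complexes first via Corollary~\ref{cor:band-pd-one}, but that is minor), and the overlap must then propagate along the shared antipath. You obtain the propagation by the combinatorial rigidity of antipaths under gentleness, whereas the paper obtains it homotopy-theoretically: if the support stopped, the diagram would exhibit the graph-map endpoint condition (RG3) of \cite[Def.~3.3]{CPS}, and by \cite[Rem.~4.9]{ALP} the family determined by $\phi$ would be null-homotopic. However, you cannot conclude that ``$\rho$ extends to infinity'': the antipaths $\direct(b)$ and $\inverse(a)$ are maximal but need not be infinite (modules of finite projective dimension), and in that case the overlap runs to the common end of both homotopy strings rather than to infinity. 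This finite case is silently dropped in your proposal, and it needs an argument too (in the paper it is the (LG3) alternative).

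The more serious gap is your concluding step. You declare that in the infinite case ``a case analysis \ldots forces the endpoint square $(**)$ to commute'', so that the diagram is a graph map rather than a quasi-graph map; this is precisely the crux and you give no argument for it, and it is moreover not the correct statement. Diagrams with an infinite shared antipath and a \emph{non-commuting} finite endpoint do occur: reading the one-sided graph maps incident with antipaths of Lemma~\ref{Graph map in antipaths} (which exist whenever there is an arrow extension) upside down produces exactly such configurations, and Remark~\ref{rem:quasi-graph} is formulated with the proviso that $\rho$ not reach the ends of both strings precisely because in this degenerate situation the upside-down diagram does \emph{not} yield a non-zero quasi-graph map. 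So the contradiction cannot be extracted from the finite endpoint; it must come from the other end of the diagram. This is what the paper does: once the overlap runs along the whole antipath, the diagram satisfies (LG3) or (LG$\infty$) of \cite[Def.~3.3]{CPS}, and \cite[Rem.~4.9]{ALP} then shows that the family of single and double maps determined by $\phi$ is null-homotopic, contradicting that $\phi$ is a standard basis element. Without this homotopy-theoretic input (or an equivalent substitute), your argument does not close.
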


\begin{proof}
If one of $\Q_\sigma$ or $\Q_\tau$ is a band complex then, by Corollary~\ref{cor:band-pd-one}, it is supported in cohomological degrees $-1$ and $0$ only, and therefore any quasi-graph map $\phi \colon \Q_\sigma \wiggle \Q_\tau$ is trivially supported in only those cohomological degrees. Therefore we assume that $\Q_\sigma = \P_\sigma$ and $\Q_\tau = \P_\tau$ are string complexes.

Suppose, for a contradiction, that $\phi \colon \P_\sigma \wiggle \P_\tau$ is supported in cohomological degree $-k \leq -2$.  
By Remark~\ref{rem:observations}\eqref{degrees}, any component of $\phi$ supported in degrees $-k \leq -2$ occurs in antipath parts of $\P_\sigma$ and $\P_\tau$. 
Without loss of generality, we may assume, up to inversion if necessary, that $\sigma = \direct(b) w' \sigma_R$ and $\tau = \direct(d) v' \tau_R$, where $\sigma_R$ is either an inverse antipath or empty; likewise for $\tau_R$. Thus, the antipath parts have the form
\[
\direct(b) = \cdots \theta_n \cdots \theta_2 \theta_1 
\quad \text{and} \quad
\direct(c) = \cdots \psi_n \cdots \psi_2 \psi_1,
\]
where $\theta_1 = b$ and $\psi_1 = d$ and $b, d \in Q_1$ are such that $bw$ and $dv$ are defined as strings. 

Since $\phi \colon \P_\sigma \wiggle \P_\tau$ is supported in cohomological degree $-k \leq -2$, we have the following subdiagram of the unfolded diagram for $\phi$.
\[
\xymatrix@!R=3px{
\ar@{..}[r] & \xydot \ar[r]^-{\theta_{k+1}} & \xydot \ar[r]^-{\theta_k} \ar@{=}[d] & \xydot \ar@{..}[r] & \\
\ar@{..}[r] & \xydot \ar[r]_-{\psi_{k+1}}    & \xydot \ar[r]_-{\psi_k}                      & \xydot \ar@{..}[r] & 
}
\]
We first show that $\phi$ is supported in degrees $-k-1$ and $-k+1$. Suppose that $\phi$ was not supported in cohomological degree $-k+1$, then inverting $\tau$ the unfolded diagram of $\phi$ would have the form,
\[
\xymatrix@!R=3px{
\ar@{..}[r] & \xydot \ar[r]^-{\theta_{k+1}} & \xydot \ar[r]^-{\theta_k} \ar@{=}[d] \ar@{}[dr]|{(*)}& \xydot \ar@{..}[r] & \\
\ar@{..}[r] & \xydot                                  & \ar[l]^-{\psi_k} \xydot                      & \ar[l]^-{\psi_{k+1}} \xydot \ar@{..}[r] & 
}
\]
where $(*)$ corresponds to the graph map right endpoint condition (RG3) in \cite[\S 1.4.1]{CPS}, whence by \cite[Rem. 4.9]{ALP} corresponds to a family of null-homotopic maps. Similarly, one can show that $\phi$ is supported in cohomological degree $-k-1$. This means that we can extend the subdiagram of the unfolded diagram of $\phi$ to the following,
\[
\xymatrix@!R=3px{
\ar@{..}[r] & \xydot \ar[r]^-{\theta_{k+1}} \ar@{=}[d] & \xydot \ar[r]^-{\theta_k} \ar@{=}[d] & \xydot \ar@{..}[r] \ar@{=}[d] & \\
\ar@{..}[r] & \xydot \ar[r]_-{\psi_{k+1}}                      & \xydot \ar[r]_-{\psi_k}                      & \xydot \ar@{..}[r]                 & 
}
\]
showing that $\theta_k = \psi_k$ for each $k \geq 2$. But this means that the unfolded diagram of $\phi$ satisfies (LG3) or (LG$\infty$) (cf. \cite[\S 1.4.1]{CPS}) and, therefore, invoking \cite[Rem. 4.9]{ALP} again, we see that $\phi$ corresponds to a null-homotopic family of single and double maps. This contradicts our assumption that $\phi$ is a quasi-graph map, therefore $\phi$ cannot be supported in cohomological degrees smaller than $-2$, as claimed.
\end{proof}

We now consider the endpoints of a quasi-graph map $\phi \colon \Q_\sigma \wiggle \Q_\tau$. Lemma~\ref{lem:quasi-degrees} says that they must occur in degrees $-1$ or $0$.
Recall the definition of homotopy strings or bands $\sigma$ and $\tau$ being \emph{compatibly oriented} for a quasi-graph map $\phi$ from \cite[Def. 3.1]{CPS}; note that if a quasi-graph map is supported in more than one degree it is automatically compatibly oriented in its unfolded form. 

\begin{lemma} \label{lem:quasi-right-0}
Suppose the quasi-graph map $\phi \colon \Q_\sigma \wiggle \Q_\tau$ has right endpoint in degree 0.
\begin{enumerate}[label=(\arabic*)]
\item The compatibly oriented unfolded diagram for $\phi$ has the following form at the right endpoint of $\phi$:
\[
\xymatrix@!R=3px{
\ar@{~}[r] & \xydot \ar[r]^-{\sigma_s} & x \ar@{<-}[r]^-{\sigma_R} \ar@{=}[d] & \xydot \ar@{~}[r]^-{\alpha} & \\
\ar@{~}[r] & \xydot \ar[r]_-{\tau_t}     & x \ar@{<-}[r]_-{\tau_R}                       & \xydot \ar@{~}[r]                &
}
\]
such that $\sigma_s , \sigma_R \neq \emptyset$, $\tau_t = \emptyset$ or $\tau_t = \sigma_s' \sigma_s$ for some (possibly nontrivial) $\sigma_s'$, and $\tau_R = \emptyset$ or $\tau_R = \sigma_R \sigma_R'$ for some nontrivial $\sigma'_R$.
\item Write $\sigma_R = \bar{a}_k \cdots \bar{a}_1$ and $\sigma_s = b_l \cdots b_1$ for $k,l \geq 1$ and $a_i,b_j \in Q_1$. Then
\begin{enumerate}[label=(\roman*)]
\item $v$ has a substring of the form
\[
\widetilde{v} =
\left\{
\begin{array}{ll}
b_{l-1} \cdots b_1 \bar{a}_k \cdots \bar{a}_2      & \text{if $\sigma_R$ is incident with $\inverse(v)$,} \\
b_{l-1} \cdots b_1 \bar{a}_k \cdots \bar{a}_1 a   & \text{for some $a \in Q_1$ otherwise;}
\end{array}
\right.
\]
\item $w$ has a substring of the form
\[
\widetilde{w} =
\left\{
\begin{array}{ll}
b_{l-1} \cdots b_1 \bar{a}_k \cdots \bar{a}_1 \bar{a}' & \text{for some $a' \in Q_1$ if $\tau_R \neq \emptyset$,} \\
b_{l-1} \cdots b_1 \bar{a}_k \cdots \bar{a}_1             & \text{otherwise.}
\end{array}
\right.
\]
\end{enumerate}
\end{enumerate}
\end{lemma}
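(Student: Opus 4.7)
The plan is to combine two structural facts. First, $\sigma = \pi(v)$ and $\tau = \pi(w)$ are projective resolutions, so both are concentrated in cohomological degrees $\le 0$, with degree $0$ at the top; the projective $P(x)$ at $x$ in degree $0$ therefore only receives the differential $d^{-1}$. Second, the right endpoint of $\phi$ lies at $x$ in that top degree. Combining these via Corollary~\ref{cor:projective-resolution} and Remark~\ref{rem:observations}, the homotopy letters of $\sigma$ and $\tau$ adjacent to $x$ must belong either to the module parts or to the antipaths $\inverse(v), \inverse(w)$, and in each case they carry components of $d^{-1}$ landing at $P(x)$. This observation drives all the structural constraints below.

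For part (1), I would first pass to the compatibly oriented unfolded diagram in the sense of \cite[Def.~7.1]{CPS}, inverting $\tau$ if necessary. The degree constraint then pins the orientations: $\sigma_s$ and $\tau_t$ (the letters inside the quasi-graph map that land at $x$) must be direct, while $\sigma_R$ and $\tau_R$ (the letters to the right of $x$) must be inverse, giving exactly the picture in the statement. Non-emptiness of $\sigma_s$ and $\sigma_R$ is forced by the non-commuting right endpoint conditions of \cite[Def.~3.9]{CPS}: if either were empty, no such non-commuting condition could be satisfied at $x$, and by \cite[Rem.~4.9]{ALP} the diagram would either represent a genuine graph map or a null-homotopic family, contradicting the hypothesis that $\phi$ is a quasi-graph map. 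The dichotomies $\tau_t \in \{\emptyset, \sigma_s' \sigma_s\}$ and $\tau_R \in \{\emptyset, \sigma_R \sigma_R'\}$, with $\sigma_R'$ nontrivial, then follow from the non-commuting endpoint conditions combined with gentleness: at the vertex where $\sigma_s$ and $\tau_t$ meet $x$ there is at most one direct letter on the relevant side, so either $\tau_t$ is empty or it must share its final arrow with $\sigma_s$; the same gentle dichotomy applied to the inverse letters on the other side yields the relation between $\tau_R$ and $\sigma_R$.

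For part (2), I would apply the cohomology reading rules of Section~\ref{sec:cohomology} to $\Q_{\pi(v)}$ locally around $x$. Since $H^0(\Q_{\pi(v)}) = M(v)$, the summand of $H^0$ coming from the components $\sigma_s = b_l \cdots b_1$ and $\sigma_R = \bar{a}_k \cdots \bar{a}_1$ of $d^{-1}$ at $P(x)$ is a substring module of $M(v)$, read off via Lemma~\ref{lem:max-alternating} (or its combined form Lemma~\ref{lem:combined} when one end meets an antipath). In both cases the ``tip'' arrows $b_l$ and $\bar{a}_1$ feeding directly into $P(x)$ are dropped, leaving $b_{l-1} \cdots b_1 \bar{a}_k \cdots \bar{a}_2$ when $\sigma_R$ continues into $\inverse(v)$, and $b_{l-1} \cdots b_1 \bar{a}_k \cdots \bar{a}_1 a$ when $\sigma_R$ lies in the module part and is followed in the walk by a further (direct) homotopy letter starting with some $a \in Q_1$. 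The analogous reading for $\pi(w)$ yields $\widetilde{w}$, and the case split on whether $\tau_R$ is empty or strictly extends $\sigma_R$ (from part (1)) determines whether an additional inverse arrow $\bar{a}'$ appears at the end.

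The main obstacle will be the careful bookkeeping in part (1): matching the non-commuting endpoint conditions of \cite[Def.~3.9]{CPS} against the sub-cases of Corollary~\ref{cor:projective-resolution} that control the local form of $\pi(v)$ and $\pi(w)$ near $x$, while simultaneously tracking the compatible orientation. Once this is pinned down, part (2) is a direct application of the cohomology framework of Section~\ref{sec:cohomology}.
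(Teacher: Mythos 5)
Your part (1) and the $v$-side of part (2) run along the same lines as the paper (the degree-$0$ sink argument, exclusion of graph-map endpoints, and the endpoint condition forcing $\tau_R=\sigma_R\sigma_R'$), but already in (2)(i) your justification is internally inconsistent: you assert that ``in both cases the tip arrows $b_l$ and $\bar{a}_1$ are dropped'', whereas the truncation in Lemma~\ref{lem:max-alternating} only removes the outermost arrows of the \emph{extremal} letters of the maximal alternating substring. Thus $\bar{a}_1$ survives inside $v$ precisely when $\sigma_R$ is not incident with $\inverse(v)$ -- your displayed case formulas are correct, but the sentence justifying them contradicts them. Moreover, in that second case you simply assume that $\sigma_R$ ``is followed in the walk by a further (direct) homotopy letter''; this needs an argument (as in the paper, via Remark~\ref{rem:observations}) that $\sigma$ cannot start with the inverse letter $\sigma_R$ unless it is incident with $\inverse(v)$, and that the next letter cannot again be inverse, so that the arrow $a$ really exists.

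The genuine gap is in (2)(ii). The string $\widetilde{w}$ is expressed in terms of the letters of $\sigma$ (the $a_i$ and $b_j$), and an ``analogous reading'' of the cohomology of $\Q_{\pi(w)}$ cannot produce this when the corresponding letters of $\tau$ are empty -- which is most of the content of the claim. If $\tau_t=\tau_R=\emptyset$ then $\Q_\tau$ is the stalk complex $P(x)$ and there is literally nothing of $\tau$ around $x$ to read off; one must instead identify $P(x)$ with the string module on the maximal relation-free walk through $x$ and use that $b_l\cdots b_1$ and $a_k\cdots a_1$ are relation-free paths out of $x$ (being homotopy letters of $\sigma$). If $\tau_R=\emptyset$ but $\tau_t\neq\emptyset$, the assertion that the \emph{full} inverse path $\bar{a}_k\cdots\bar{a}_1$ lies in $w$ requires the maximality argument from Corollary~\ref{cor:projective-resolution}: $w$ cannot start with $b_1$ (since $b_1\bar{a}_k$ is a string this would force $\tau_R\neq\emptyset$), so a maximal inverse prefix of $w$ was removed in forming $\pi(w)$, and if that prefix ended at $\bar{a}_i$ with $i>1$ it could be prolonged by $\bar{a}_{i-1}$, again forcing $\tau_R\neq\emptyset$; dually, when $\tau_t=\emptyset$ one must use gentleness to identify the removed maximal direct suffix of $w$ as $p\,b_l\cdots b_1$. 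These cross-comparisons between the letters of $\sigma$ and the string $w$ are exactly where the lemma has content, and your sketch does not engage with them.
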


\begin{proof}
$(1)$ Since $P(x)$ sits in degree zero it must be a sink for any differential incident with it because $\Q_\sigma$ and $\Q_\tau$ are projective resolutions. If $\sigma_s = \emptyset$ or $\sigma_R = \emptyset$, then the diagram indicates a graph map endpoint and $\phi \colon \Q_\sigma \wiggle \Q_\tau$ is not a quasi-graph map. Therefore, $\sigma_s, \sigma_R \neq \emptyset$. If $\tau_R \neq \emptyset$, the orientation of the differentials means that $\phi$ must satisfy the quasi-graph map right endpoint condition (RQ2), whence $\tau_R = \sigma_R \sigma'_R$ for some nontrivial $\sigma'_R$. The statement regarding $\tau_t$ just lists the possible cases that may occur with the given orientation.

$(2) (i)$ First note that $b_{l-1} \cdots b_1$ is a substring of $v$ by Corollary~\ref{cor:projective-resolution}.
If $\sigma_R$ is incident with $\inverse(v)$ then the first statement is clear. 
By Remark~\ref{rem:observations}\eqref{starts},  $\sigma$ cannot start with the inverse homotopy letter $\sigma_R$ unless it is incident with $\inverse(v)$. Thus, if $\sigma_R$ is not incident with $\inverse(v)$ then $\alpha$ must end with a direct homotopy letter, whose last arrow we denote by $a \in Q_1$, say, giving the required form for $\widetilde{v}$.

$(2) (ii)$ We treat this in cases. Firstly, if $\tau_t, \tau_R = \emptyset$, then $\Q_\tau$ is the stalk complex $P(x)$ concentrated in degree zero. Using the form of $\sigma_s$ and $\sigma_R$ we see that $P(x) \cong M(u)$ for some string $u = q b_l \cdots b_1 \bar{a}_k \cdots \bar{a}_1 \bar{p}$, where $q$ is a maximal direct string and $\bar{p}$ a maximal inverse string composable with $b_l$ and $\bar{a}_k$, respectively, as strings. The claim is now clear in this case.

Now assume that $\tau_t \neq \emptyset$ and $\tau_R = \emptyset$. By $(1)$, $\tau_t = \sigma'_s \sigma_s$, where $\sigma'_s$ is possibly trivial. Since $\tau_R = \emptyset$, $w$ either starts with $b_1$ (a direct arrow) or else $w$ has had a maximal inverse prefix removed. The former case cannot occur because $b_1 \bar{a}_k$ is defined as a string, which by Corollary~\ref{cor:projective-resolution} would make $\tau_R \neq \emptyset$. Thus, by gentleness, $w = u \bar{a}_1 w_R$ for some (possibly trivial) inverse string $w_R$. If $\bar{a}_k \cdots \bar{a}_1$ is a (possibly equal) substring of $\bar{a}_k w_R$ then $w$ contains the substring $\widetilde{w}$ as claimed. So suppose $\bar{a}_k w_R = \bar{a}_k \cdots \bar{a}_i$ for some $1 < i \leq k$. Then, $\bar{a}_k w_R \bar{a}_{i-1}$ is defined as a string, again rendering $\tau_R \neq \emptyset$ by Corollary~\ref{cor:projective-resolution}; a contradiction. 

Suppose now that $\tau_t = \emptyset$ and $\tau_R \neq \emptyset$. Since $\tau_t = \emptyset$, $w$ ends with a direct substring which has been removed 
by Remark~\ref{rem:observations}\eqref{ends}.
By gentleness, the maximal direct suffix that has been removed is $p b_l \cdots b_1$, where again $p$ is the maximal direct path composable with $b_l$ as a string. Now since $\tau_R = \sigma_R \sigma'_R$ is a strictly longer inverse homotopy letter than $\sigma_R$, it follows that $\widetilde{w}$ is a substring of $w$, where $\sigma'_R = \bar{a}' \sigma''_R$ for some $a' \in Q_1$ and $\sigma''_R$ is possibly trivial.

Finally, if $\tau_t, \tau_R \neq \emptyset$, then arguing as above shows that $\widetilde{w}$ is a substring of $w$.
\end{proof}

\begin{lemma} \label{lem:quasi-right-minus-1}
Suppose the quasi-graph map $\phi \colon \Q_\sigma \wiggle \Q_\tau$ has right endpoint in degree $-1$.
\begin{enumerate}[label=(\arabic*)]
\item The compatibly oriented unfolded diagram for $\phi$ has the following form at the right endpoint of $\phi$:
\[
(a) \
\xymatrix@!R=3px{
\ar@{~}[r] & \xydot \ar@{<-}[r]^-{\sigma_s} & x \ar[r]^-{\sigma_R} \ar@{=}[d] & \xydot \ar@{~}[r] & \\
\ar@{~}[r] & \xydot \ar@{<-}[r]_-{\tau_t}     & x \ar[r]_-{\tau_R}                       & \xydot \ar@{~}[r]  &
}
\quad (b) \
\xymatrix@!R=3px{
\ar@{~}[r] & \xydot \ar@{<-}[r]^-{\sigma_s} & x \ar@{<-}[r]^-{\sigma_R} \ar@{=}[d] & \xydot \ar@{~}[r] & \\
\ar@{~}[r] & \xydot \ar@{<-}[r]_-{\tau_t}     & x \ar[r]_-{\tau_R}                                & \xydot \ar@{~}[r]  &
},
\]
where $\tau_t \neq \emptyset$. In case (a), $\sigma_s = \emptyset$ or $\sigma_s = \tau_t' \tau_t$ for some $\tau_t'$ and we require $\tau_R \neq \emptyset$ and $\sigma_R = \emptyset$ or else $\sigma_R = \tau_R \tau_R'$ for some nontrivial $\tau_R'$. In case (b) $\sigma_s = \tau_t' \tau_t$ for some $\tau_t'$ and we require one of $\tau_R \neq \emptyset$ or $\sigma_R \neq \emptyset$ and if both are not empty letters then $\bar{\sigma_R} \tau_R \neq 0$. In both cases $\tau_t'$ may be trivial.
\item Write $\tau_t = \bar{d}_q \cdots \bar{d}_1$ and $\tau_R = c_p \cdots c_1$ for $k,l \geq 1$ and $c_i,d_j \in Q_1$. Then
\begin{enumerate}[label=(\roman*)]
\item $v$ has a substring of the form
\[
\widetilde{v} =
\left\{
\begin{array}{ll}
\bar{d}_q \cdots \bar{d}_2                             & \text{if $\sigma_s$ is incident with $\inverse(v)$,} \\
c_{p-1} \cdots c_1 c                                      & \text{for some $c \in Q_1$ if $\sigma_s = \emptyset$ and $\sigma_R$ is incident with $\direct(v)$,} \\
\bar{d}_q \cdots \bar{d}_1 c_p \cdots c_1 c  & \text{for some $c \in Q_1$ if $\sigma_s \neq \emptyset$ and $\sigma_R \neq \emptyset$ is direct;}
\end{array}
\right.
\]
\item $w$ has a substring of the form
\[
\widetilde{w} =
\left\{
\begin{array}{ll}
\bar{d}_q \cdots \bar{d}_1 c_p \cdots c_1 & \text{$\tau_R \neq \emptyset$,} \\
\bar{d}_q \cdots \bar{d}_2                         & \text{otherwise.}
\end{array}
\right.
\]
\end{enumerate}
\end{enumerate}
\end{lemma}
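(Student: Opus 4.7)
The plan is to mirror the structure of the proof of Lemma~\ref{lem:quasi-right-0}, adapting the argument to account for the right endpoint sitting in cohomological degree $-1$ rather than degree $0$. For part (1), the key observation is that since $\Q_\sigma$ and $\Q_\tau$ are projective resolutions, they are supported in non-positive cohomological degrees; in particular, any projective neighbouring $P(x)$ must sit in degree $0$ or $-2$. This forces each of the homotopy letters $\sigma_s, \sigma_R, \tau_t, \tau_R$ to be either a direct letter carrying a component of the differential from $P(x)$ up into degree $0$, or an inverse letter carrying a component from degree $-2$ down into $P(x)$. Since $\tau_R$ is required to be non-empty or $\sigma_R$ non-empty after we eliminate all shapes satisfying the graph-map endpoint condition (and hence corresponding to graph maps rather than quasi-graph maps, cf.\ \cite[Def.~3.3]{CPS} versus \cite[Def.~3.9]{CPS}) or satisfying the null-homotopy criteria (LG3)/(LG$\infty$) that via \cite[Rem.~4.9]{ALP} produce a null-homotopic family, only the two configurations~(a) and~(b) listed in the statement survive. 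In configuration~(a), the requirement $\sigma_s = \tau_t'\tau_t$ (or $\sigma_s = \emptyset$) is enforced by the quasi-graph right endpoint condition \cite[Def.~3.9]{CPS}, and similarly $\sigma_R = \tau_R \tau_R'$ if $\sigma_R \neq \emptyset$. In configuration~(b), the condition $\bar\sigma_R \tau_R \neq 0$ excludes the situation where $\sigma_R$ and $\tau_R$ would pass through a relation and thereby fail to be components of the same endpoint. The non-emptiness of $\tau_t$ in both cases follows because if $\tau_t = \emptyset$ then $P(x)$ would be a stalk at the left-most projective of $\tau$ in degree $-1$, but this is ruled out by Remark~\ref{rem:observations}\eqref{ends}, as the bottom right endpoint of a projective resolution in degree $-1$ must arise from either the module part or an inverse antipath, both of which contribute a nontrivial $\tau_t$.

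For part (2), we translate the shapes of $\tau_t$ and $\tau_R$ into substrings of $w$ via Corollary~\ref{cor:projective-resolution}, and likewise for $\sigma_s, \sigma_R$ into substrings of $v$. Since the endpoint is in degree $-1$, the letters $\tau_t$ and $\tau_R$ belong to the module part of $\tau = \pi(w)$: they are the components of the differential from $P(x)$ in degree $-1$ into the projectives in degree $0$ sitting to the right and left of $P(x)$ in the unfolded diagram. Decomposing $\tau_t = \bar d_q \cdots \bar d_1$ and $\tau_R = c_p \cdots c_1$ and reading the boundary map into $P(x)$ as in the computation of Lemma~\ref{lem:max-alternating}, we recover a substring of $w$ of the form $\bar d_q \cdots \bar d_1 c_p \cdots c_1$ whenever $\tau_R \neq \emptyset$. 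If on the other hand $\tau_R = \emptyset$, then by Remark~\ref{rem:observations}\eqref{ends} combined with Corollary~\ref{cor:projective-resolution}(3), $w$ has had a maximal direct suffix removed so that the last inverse arrow $\bar d_1$ has been consumed, leaving the substring $\bar d_q \cdots \bar d_2$ of $w$. The three sub-cases for $\widetilde{v}$ correspond to the analogous analysis for $\sigma$, now with $\sigma_s, \sigma_R$ possibly lying partly in an antipath: case~(i) covers $\sigma_s$ incident with $\inverse(v)$, when the antipath swallows $\bar d_1$; case~(ii) arises when $\sigma_s = \emptyset$ so that only $\sigma_R$ contributes, incident with $\direct(v)$; and case~(iii) is the generic configuration (b) where both $\sigma_s$ and $\sigma_R$ lie in the module part and a genuine extra arrow $c$ of $Q$ must follow the substring in $v$ by gentleness together with Corollary~\ref{cor:projective-resolution}.

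The main obstacle will be the careful case analysis of the endpoint shapes in part~(1): with $P(x)$ in degree $-1$, both orientations of $\sigma_R$ and $\tau_R$ are a priori possible, whereas in Lemma~\ref{lem:quasi-right-0} the top-degree restriction forced a single orientation. Sorting out which combinations yield genuine quasi-graph endpoints as opposed to graph-map endpoints or to configurations whose associated family of single/double maps is null-homotopic requires delicate application of the endpoint conditions in \cite[Defs.~3.3 and~3.9]{CPS} and of \cite[Rem.~4.9]{ALP}, in the spirit of the contradiction argument used in Lemma~\ref{lem:quasi-degrees}. Once part~(1) is settled, part~(2) follows by a bookkeeping exercise with Corollary~\ref{cor:projective-resolution} and the fact that consecutive homotopy letters in the module part have alternating orientations.
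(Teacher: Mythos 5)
Your overall strategy (orientation/degree analysis at $x$, elimination of graph-map endpoint configurations and null-homotopic families via \cite[Defs.~3.3, 3.9]{CPS} and \cite[Rem.~4.9]{ALP}, then translation into substrings via Corollary~\ref{cor:projective-resolution} and Remark~\ref{rem:observations}) is the same as the paper's, but the proposal stops short of the actual content of part (1). The paper's proof consists precisely of the case analysis you defer as "the main obstacle": it lists the three admissible local orientations at a degree $-1$ vertex (the fourth, with both differential components entering $P(x)$, is excluded by Remark~\ref{rem:observations}\eqref{degrees}), shows that $\tau$ must have both letters pointing out of $x$ in every surviving case, that $\sigma$ with both letters pointing out gives (a) and $\sigma$ with one letter entering from degree $-2$ gives (b), and that the remaining orientation of $\sigma$ belongs to the dual left-endpoint statement. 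None of this is established in your write-up, so part (1) is asserted rather than proved.

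There are also two concrete errors. First, your justification of $\tau_t \neq \emptyset$ via Remark~\ref{rem:observations}\eqref{ends} fails: an endpoint of $\pi(w)$ \emph{can} sit in degree $-1$, namely at the far end of an antipath $\inverse(w)$ or $\direct(w)$ of length one (its unique adjacent letter is then a component from degree $-1$ to $0$), so the degree of $x$ alone does not force $\tau_t \neq \emptyset$; indeed the statement itself allows $\tau_R=\emptyset$ or $\sigma_R=\emptyset$ in case (b), which shows a pure degree argument cannot rule out empty letters here. The correct reason, used in the paper, is that $\tau_t=\emptyset$ would produce a graph-map endpoint condition and hence, by \cite[Rem.~4.9]{ALP}, a null-homotopic family, contradicting that $\phi$ is a quasi-graph map. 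Second, in part (2) your blanket claim that $\tau_t$ and $\tau_R$ "belong to the module part" is exactly what cannot be assumed: the case distinctions in the statement arise from whether these letters (and $\sigma_s$, $\sigma_R$) are incident with $\inverse(\cdot)$ or $\direct(\cdot)$, and this must be decided using Remark~\ref{rem:observations}\eqref{starts},\eqref{ends},\eqref{lengths} in each configuration. In particular, your explanation of the $\tau_R=\emptyset$ case for $\widetilde{w}$ ("a maximal direct suffix removed so that $\bar d_1$ has been consumed") misidentifies the mechanism: there $\bar d_1$ is the appended first arrow of the antipath $\inverse(w)$ (via Remark~\ref{rem:observations}\eqref{starts}, $\tau_t$ is incident with $\inverse(w)$), not an arrow removed from $w$, which is why only $\bar d_q \cdots \bar d_2$ survives inside $w$.
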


\begin{proof}
$(1)$ There are three possible orientations for the homotopy strings $\sigma$ and $\tau$ with right endpoint in degree $-1$, where in the following diagrams $x$ sits in degree $-1$:
\[
(I) \
\xymatrix{
\xydot & \ar[l] x \ar[r] & \xydot
}
\quad (II) \
\xymatrix{
\xydot & \ar[l] x & \ar[l] \xydot
}
\quad (III) \
\xymatrix{
\xydot \ar[r] & x \ar[r] & \xydot
}.
\]
Note that the fourth possible orientation does not occur because then the corresponding string or band complex would have maximal cohomological degree $-1$, contradicting Remark~\ref{rem:observations}\eqref{degrees}.  
One can check that if $\sigma$ has orientation (I) then so does $\tau$: the other orientations produce graph map endpoint conditions (and hence null-homotopies; see \cite[Rem. 4.9]{ALP}), this gives case $(a)$ above. Observe that in case (a), $\tau_t \neq \emptyset$ and $\tau_R \neq \emptyset$, for otherwise we would have a graph map endpoint condition.

If $\sigma$ has orientation (II) then $\tau$ cannot have orientation (III) because this again gives a graph map endpoint condition. If $\tau$ has orientation (II) then we may assume $\tau_R \neq \emptyset$ (the case $\tau_R = \emptyset$ is trivial can be considered as a subcase of $\tau$ having orientation (I)), in which case $\length(\tau_R) \geq 1$. However, for degree reasons, it must be incident with $\inverse(w)$ and hence $\length(\tau_R) = 1$. Therefore $\tau$ cannot have orientation (II). This gives us case $(b)$. 
Note in this case that since $x$ sits in degree $-1$, $\sigma_s \neq \emptyset$ by Remark~\ref{rem:observations}\eqref{ends}; as above, $\tau_s \neq \emptyset$ otherwise we have a graph map endpoint condition.

When $\sigma$ has orientation (III), the unfolded diagrams are those for the dual left endpoint conditions, and 
can be properly stated in the dual of this lemma. 

$(2)(i)$  First observe that, in both cases, either $\sigma_s \neq \emptyset$ or $\sigma_R \neq \emptyset$ (or both) for degree reasons: if both were empty homotopy letters, $\Q_\sigma$ would be a stalk complex concentrated in degree $-1$, contradicting Remark~\ref{rem:observations}\eqref{degrees}.

Suppose we are in case $(a)$ of part $(1)$.
Suppose $\sigma_s = \emptyset$ but $\sigma_R \neq \emptyset$. Then $\sigma_R = \tau_R \tau_R'$ for some nontrivial $\tau_R'$ by the (RQ1) endpoint condition.  
By Remark~\ref{rem:observations}\eqref{ends}, $\sigma$ cannot end with a direct homotopy letter unless it is incident with $\direct(v)$.
Let $c \in Q_1$ be the final arrow of the homotopy (sub)letter $\tau_R'$. Then since $\sigma_R$ is incident with $\direct(v)$, we have that $\widetilde{v} = c_{p-1} \cdots c_1 c$ is a substring of $v$.

If $\sigma_s \neq \emptyset$ but $\sigma_R = \emptyset$, then Remark~\ref{rem:observations}\eqref{starts} shows that $\sigma_s$ is incident with $\inverse(v)$, giving $\widetilde{v} = \bar{d}_q \cdots \bar{d}_2$ as a substring of $v$.

If $\sigma_1, \sigma_R \neq \emptyset$, then neither is incident with $\direct(v)$ or $\inverse(v)$, in which case $\widetilde{v} = \bar{d}_q \cdots \bar{d}_1 c_p \cdots c_1 c$, where $c \in Q_1$ is as above, is a substring of $v$.

Now suppose we are in case $(b)$ of part $(1)$. 
If $\sigma_R = \emptyset$ then using Remark~\ref{rem:observations}\eqref{starts} again we have $\sigma_s$ is incident with $\inverse(v)$ and $\widetilde{v} = \bar{d}_q \cdots \bar{d}_2$ is a substring of $v$. If $\sigma_R \neq \emptyset$, then by Remark~\ref{rem:observations}\eqref{lengths}, $\length(\sigma_R) = 1$ and $\sigma_R$ is incident with $\inverse(v)$, in which case $\widetilde{v} = \bar{d}_q \cdots \bar{d}_2$ is again a substring of $v$.

$(2)(ii)$ Suppose we are in case $(a)$ of part $(1)$. Since $\tau_t , \tau_R \neq \emptyset$, the homotopy substring $\tau_1 \tau_R$ cannot be incident with $\direct(w)$ nor $\inverse(w)$ for degree reasons. Thus, $\widetilde{w} = \bar{d}_q \cdots \bar{d}_1 c_p \cdots c_1$ is a substring of $w$.

Finally, suppose we are in case $(b)$ of $(1)$. 
If $\tau_R = \emptyset$, then Remark~\ref{rem:observations}\eqref{starts} shows that $\tau_1$ is incident with $\inverse(w)$, giving $\widetilde{w} = \bar{d}_q \cdots \bar{d}_2$ as a substring of $w$. If $\tau_R \neq \emptyset$, then as above the homotopy substring $\tau_1 \tau_R$ cannot be incident with $\direct(w)$ nor $\inverse(w)$. Thus, $\widetilde{w} = \bar{d}_q \cdots \bar{d}_1 c_p \cdots c_1$ is a substring of $w$.
\end{proof}

Lemmas~\ref{lem:quasi-right-0} and \ref{lem:quasi-right-minus-1} admit obvious duals for the left endpoints of quasi-graph maps.

Now applying the graphical calculus for the mapping cones of the homotopy set determined by a quasi-graph map \cite[Prop. 5.2]{CPS} and \cite[\S 2]{Addendum} determines the middle term of the extension $\Q_\tau \to \E \to \Q_\sigma \to \Sigma \Q_\tau$ in $\KminusL$. Lemmas~\ref{lem:quasi-right-0} and \ref{lem:quasi-right-minus-1} and their duals, Theorem~\ref{thm:cohomology}, together with a calculation as in the proof of Lemma~\ref{Graph map in antipaths} allows us to take cohomology to determine the extension  $0 \to M(w) \to H^0(\E) \to M(v) \to 0$. 
We summarise this computation in the next proposition. 

\begin{proposition} \label{prop:quasi-overlaps}
Suppose $\phi \colon \Q_\sigma \wiggle \Q_\tau$ is a quasi-graph map with the following unfolded diagram, with $t \geq 0$ and, when $t=0$ we mean a quasi-graph map supported in precisely one degree and we replace $\rho_1$ by $\sigma_L$ and $\tau_L$ as appropriate.
\[
\xymatrix@!R=5px{
\text{deg:}                                           &                               &                                       & h'                                              &                                                        &                                  &                                                         &  h                                                                                           &                                             & \\
\Q_\sigma \colon \ar@{~>}[d]_-{\phi} & \ar@{~}[r]^-{\beta} & \xydot \arr^-{\sigma_L}  & \xydot \arr^-{\rho_t} \ar@{=}[d] & \xydot \arr^-{\rho_{t-1}} \ar@{=}[d] & \cdots \arr^-{\rho_2}  & \xydot \arr^-{\rho_1} \ar@{=}[d] & \xydot \arr^-{\sigma_R} \ar@{=}[d]  & \xydot \ar@{~}[r]^-{\alpha}  & \\
\Q_\tau \colon                                    & \ar@{~}[r]_-{\delta} & \xydot \arr_-{\tau_L}      & \xydot \arr_-{\rho_t}                  & \xydot \arr_-{\rho_{t-1}}                  & \cdots \arr_-{\rho_2}  & \xydot \arr_-{\rho_1}                  & \xydot \arr_-{\tau_R}             & \xydot \ar@{~}[r]_-{\gamma} & 
}
\]
Let $\f \colon \Q_\sigma \to \Sigma \Q_\tau$ be any representative of $\phi$, then $\Phi(\f)$ is an overlap extension with overlap $m = m_L \rho_{t-1} \cdots \rho_2 m_R$, where
\begin{align*}
m_R & = 
\left\{
\begin{array}{ll}
\widetilde{\rho_1} \bar{a}_k \cdots \bar{a}_2   & \text{if $h=0$ and $\sigma_R = \bar{a}_k \cdots \bar{a}_2$ is incident with $\inverse(v)$;} \\
\widetilde{\rho_1} \bar{a}_k \cdots \bar{a}_1   & \text{if $h=0$ and $\sigma_R = \bar{a}_k \cdots \bar{a}_1$ is not incident with $\inverse(v)$;} \\
\bar{d}_q \cdots \bar{d}_2                                & \text{if $h=-1$ and $\rho_1 \neq \emptyset$ is incident with $\inverse(v)$;} \\
\widetilde{\rho_1} c_p \cdots c_2                     & \text{if $h=-1$, $\rho_1 \neq \emptyset$ and $\sigma_R = c_p \cdots c_1$ with $p > 0$;}
\end{array}
\right. \\
m & = c_{p-1} \cdots c_1 \hspace{10mm} \text{if $\rho_1 = \emptyset$ and $\sigma_R = c_p \cdots c_1$ is incident with $\direct(v)$,}
\end{align*}
where $a_i, d_i, c_i \in Q_1$, $m_L$ is defined dually, and 
\[
\widetilde{\rho_1} = 
\begin{cases} 
\rho_1                                                     & \text{if $t \geq 1$,} \\
\text{the last homotopy letter of $m_L$} & \text{if $t= 0$.}
\end{cases}
\]
\end{proposition}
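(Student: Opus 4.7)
The plan is to combine the graphical mapping cone calculus of \cite[Prop.~7.2]{CPS} with the cohomology rules of Theorem~\ref{thm:cohomology} and the endpoint analysis of Lemmas~\ref{lem:quasi-right-0} and \ref{lem:quasi-right-minus-1}. Since Lemma~\ref{lem:quasi-degrees} forces $h, h' \in \{-1, 0\}$, only a finite number of endpoint configurations arise. I would fix any representative $\f \colon \Q_\sigma \to \Sigma \Q_\tau$ of $\phi$ (say a singleton single or double map localised at the right endpoint) and treat the right endpoint first; the data producing $m_L$ then comes from the dual analysis at the left endpoint.

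Next, I would use \cite[Prop.~7.2]{CPS} to read off the homotopy string (or band) of the mapping cone $C^\bullet_{\f}$ directly from the unfolded diagram of $\phi$: the common segment $\rho_t \cdots \rho_1$ together with the outer data on each side assembles into two homotopy strings (or, in the band case, a single homotopy band) whose string/band complex is $C^\bullet_{\f}$. Wrapping the mapping cone back up as a complex and applying Theorem~\ref{thm:cohomology}, in particular Lemma~\ref{lem:max-alternating} for the maximal alternating homotopy substring containing $\rho_t \cdots \rho_1$, identifies the degree zero cohomology as $M(u) \oplus M(u')$ (or the appropriate band module), with overlap $m$ the common substring produced by the computation.

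The precise shape of $m_R$ is then forced by Lemmas~\ref{lem:quasi-right-0} and \ref{lem:quasi-right-minus-1}: these lemmas already list the substrings $\widetilde{v}$ and $\widetilde{w}$ of $v$ and $w$ that the endpoint structure imposes, and the maximal alternating substring rule converts those substrings into the right end of the overlap. For instance, when $h = 0$ and $\sigma_R$ is incident with $\inverse(v)$ the computation yields $m_R = \widetilde{\rho_1} \bar{a}_k \cdots \bar{a}_2$ because the final arrow of $\sigma_R$ sits inside $\inverse(v)$ and so contributes to the kernel rather than to the module part; the remaining cases are handled identically by inspecting whether $\sigma_R$ is direct or inverse, whether it is incident with $\direct(v)$ or $\inverse(v)$, and whether $\rho_1$ is empty. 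The $t = 0$ degeneracy is absorbed by observing that Lemma~\ref{lem:max-alternating} applied to a single-letter endpoint still produces a contribution from the last homotopy letter of $m_L$, which is precisely the definition of $\widetilde{\rho_1}$ in this case.

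Finally, to confirm that the middle term genuinely realises an overlap extension, I would trace the connecting maps in the triangle $\Q_\tau \to C^\bullet_{\f} \to \Q_\sigma \rightlabel{\f} \Sigma \Q_\tau$ and take $H^0$ as in the proof of Lemma~\ref{Graph map in antipaths}, verifying that the induced maps $M(w) \to H^0(C^\bullet_{\f}) \to M(v)$ are the canonical inclusion and projection coming from the substring/factor string decomposition through $m$. The principal obstacle is bookkeeping: the several combinations of endpoint orientations, incidence conditions with $\direct$ and $\inverse$ antipath parts, and emptiness of $\sigma_R$, $\tau_R$, $\rho_1$ each trigger a slightly different invocation of the cohomology lemmas of Section~\ref{sec:cohomology}, and one must check that every variation collapses to the single uniform formula for $m_R$ stated in the proposition.
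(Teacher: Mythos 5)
Your proposal is correct and follows essentially the same route as the paper: the proposition is exactly the summary of the computation the paper describes, namely reading off the mapping cone via the graphical calculus of \cite[Prop.~7.2]{CPS}, constraining the endpoints by Lemma~\ref{lem:quasi-degrees} and Lemmas~\ref{lem:quasi-right-0} and \ref{lem:quasi-right-minus-1} (and their duals), and then taking cohomology with Theorem~\ref{thm:cohomology} together with an $H^0$ calculation on the triangle as in the proof of Lemma~\ref{Graph map in antipaths}. No genuinely different ideas are introduced, and your case bookkeeping for $m_R$ matches the paper's intended derivation.
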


\begin{remark}
The analysis concerning quasi-graph maps above leading to Proposition~\ref{prop:quasi-overlaps} only concerns the endpoints of the overlap defining a quasi-graph map in the unfolded diagram. As such, the length of the overlap is not relevant for the argument. In particular, this means that when one (or both) of $v$ or $w$ is a band, and thus $\sigma$ or $\tau$ is a homotopy band, we are able to get quasi-graph maps whose overlaps are longer than at least one of the bands, but this does not affect the computation carried out in Proposition~\ref{prop:quasi-overlaps}.
\end{remark}

\subsection{Singleton maps} \label{sec:singleton}

As before, throughout this subsection $\sigma = \pi(v)$ and $\tau = \pi(w)$ for some strings or bands $v$ and $w$.
We now examine the kinds of extensions that arise from singleton (single and double) maps $\f \colon \Q_\sigma \to \Sigma \Q_\tau$. We first note that singleton double maps never occur as morphisms between projective resolutions of modules. 

\begin{lemma} \label{lem:no-singleton-doubles}
There are no singleton double maps $\f \colon \Q_\sigma \to \Sigma \Q_\tau$.
\end{lemma}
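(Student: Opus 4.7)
The plan is to derive a contradiction from the degree constraints enforced on projective resolutions. Suppose for contradiction that $\f \colon \Q_\sigma \to \Sigma \Q_\tau$ is a singleton double map, with notation as in Section~\ref{sec:singleton}. By definition, there is a nontrivial path $f'$ satisfying $\sigma_C = f_L f'$ and $\tau_C = f' f_R$, with $f_L$ and $f_R$ also nontrivial paths. In particular, both homotopy letters $\sigma_C$ and $\tau_C$ have length at least $2$.

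Next I would invoke Remark~\ref{rem:observations}(5) applied to $\tau_C$ inside $\tau = \pi(w)$: any homotopy letter of length greater than $1$ in the projective resolution of a module has its two endpoints supported in cohomological degrees in $\{-1, 0\}$. Since these endpoints are connected by a component of the differential of $\Q_\tau$, they occupy consecutive cohomological degrees, and hence must be exactly $-1$ and $0$. The same observation applies to $\sigma_C$ inside $\sigma = \pi(v)$.

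The contradiction then comes from tracking the shift: in the unfolded diagram of a double map, the vertex of $\sigma_C$ at cohomological degree $d$ of $\Q_\sigma$ is joined vertically (via $f_L$ or $f_R$) to the vertex of $\tau_C$ at degree $d$ of $\Sigma \Q_\tau$, i.e.\ at degree $d+1$ of $\Q_\tau$. Taking the endpoint of $\sigma_C$ sitting at degree $0$ of $\Q_\sigma$, the matching endpoint of $\tau_C$ would live in degree $+1$ of $\Q_\tau$. But $\Q_\tau$ is a projective resolution of a module and is therefore concentrated in non-positive cohomological degrees. Hence the endpoint projective is zero, contradicting $\tau_C$ being a nontrivial homotopy letter of length at least $2$ (equivalently, contradicting $f_L$ or $f_R$ being a nontrivial path, since the target would be zero).

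The main obstacle, which is minor, is verifying that the two endpoints of $\sigma_C$ (and of $\tau_C$) sit at \emph{distinct} cohomological degrees rather than coinciding in $\{-1,0\}$; this is immediate from the fact that they are connected by a single component of the differential of the complex, which shifts cohomological degree by exactly one. Once this is in hand, the degree clash against the shift $\Sigma$ rules out singleton double maps between projective resolutions of modules.
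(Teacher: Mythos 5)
Your argument is correct and is essentially the paper's own proof: both rest on the singleton condition forcing $\length(\sigma_C),\length(\tau_C)>1$, then Remark~\ref{rem:observations}\eqref{lengths} pinning these homotopy letters between degrees $-1$ and $0$ of the respective projective resolutions, and finally the shift $\Sigma$ producing an impossible degree match (your phrasing via a vertex of $\Q_\tau$ landing in degree $+1$ is the same clash the paper states as $\{-1,0\}$ versus $\{-2,-1\}$). No gaps.
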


\begin{proof}
By definition, the unfolded diagram of a singleton double map has the form
\[
\xymatrix@!R=5px{
\Q_\sigma :          & \ar@{~}[r]^-{\beta}  & \xydot \arr^-{\sigma_L} & \xydot \ar[r]^-{\sigma_C = f_L f'} \ar[d]_-{f_L} & \xydot \arr^-{\sigma_R} \ar[d]^-{f_R} & \xydot \ar@{~}[r]^-{\alpha} &  \\
\Sigma \Q_\tau :  & \ar@{~}[r]_-{\delta} & \xydot \arr_-{\tau_L}     & \xydot \ar[r]_-{\tau_C = f' f_R}                         & \xydot \arr_-{\tau_R}                          & \xydot \ar@{~}[r]_-{\gamma} &
}
\]
where $f_L$, $f'$ and $f_R$ are nontrivial. By Remark~\ref{rem:observations}\eqref{lengths}, $\length(\sigma_C) > 1$ and $\length(\tau_C) > 1$. In particular, since $\sigma$ is a homotopy string or band corresponding to a projective resolution, $\sigma_C$ is a homotopy letter occurring between degrees $-1$ and $0$. On the other hand, $\tau$ is also a homotopy string or band corresponding to a projective resolution, but $\Sigma \Q_\tau$ has been shifted, whence $\tau_C$ must be a homotopy letter occurring between degrees $-2$ and $-1$. Hence there are no such maps.
\end{proof}

Recall the notation and unfolded diagram for a singleton single map $\f \colon \Q_\sigma \to \Sigma \Q_\tau$ from Section~\ref{sec:single}\eqref{singleton-single}. Throughout this section, whenever $\sigma_R \neq \emptyset$ or $\tau_R \neq \emptyset$ in \eqref{singleton-single} then since $f_R$ and $f_L$ are direct strings, we can assume, without loss of generality, that $f_R \in Q_1$ and $f_L \in Q_1$, respectively.

\begin{lemma} \label{lem:degree-minus-one}
Suppose $\f \colon \Q_\sigma \to \Sigma \Q_\tau$ as a singleton single map with single component $f = f_n \cdots f_1$. Then the component $f$ occurs in cohomological degree $-1$.
\end{lemma}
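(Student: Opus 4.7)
The approach is a direct cohomological-degree count. Since, by Corollary~\ref{cor:projective-resolution}, $\sigma = \pi(v)$ and $\tau = \pi(w)$ are the homotopy strings or bands realising the projective resolutions of $M(v)$ and $M(w)$, the complexes $\Q_\sigma$ and $\Q_\tau$ are concentrated in cohomological degrees $\le 0$, and hence $\Sigma \Q_\tau$ is concentrated in degrees $\le -1$. Let $d$ denote the common cohomological degree occupied by the source $P(s(\sigma_R))$ of $f$ in $\Q_\sigma$ and by its target $P(s(\tau_R))$ in $\Sigma \Q_\tau$, as required by the chain-map condition on $\f$. The position of the target in $\Sigma \Q_\tau$ immediately forces the upper bound $d \le -1$.

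The main step is to exploit the singleton condition~\eqref{singleton-single}: the direct homotopy letter $\sigma_R = f f_R$ contains $f$ as a nontrivial initial subpath. In the generic situation this forces $\length(\sigma_R) \ge 2$, and then Remark~\ref{rem:observations}\eqref{lengths} pins down $d = \deg_{\Q_\sigma} P(s(\sigma_R)) \in \{-1, 0\}$. Intersecting with $d \le -1$ yields $d = -1$, precisely the claim. A symmetric application of Remark~\ref{rem:observations}\eqref{lengths} to the inverse letter $\tau_R = \bar{f}\bar{f_L}$ of length $\ge 2$ reaches the same conclusion after accounting for the shift by $\Sigma$.

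The only delicate case is the degenerate one in which $\sigma_R$ or $\tau_R$ is the empty homotopy letter or has length one (which can only happen when $f$ itself is a single arrow and $f_R$ or $f_L$ is trivial). Here Remark~\ref{rem:observations}\eqref{lengths} no longer applies on that side, and one must combine the remaining length-$\ge 2$ letter with the endpoint descriptions of Remark~\ref{rem:observations}\eqref{starts}, \eqref{ends} together with the singleton disjointness of $f$ from $\sigma_L$ and $\tau_L$ to exclude the possibility that $f$ is embedded deeper in an antipath of $\sigma$ or $\tau$. I expect this bookkeeping to be the only genuine obstacle; the main case itself is a two-line consequence of the observation that long homotopy letters of a projective resolution have both endpoints in cohomological degrees $\{-1, 0\}$.
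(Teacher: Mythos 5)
Your first half is fine and matches the paper: the shifted projective resolution gives $d\le -1$, and when $\sigma_R\neq\emptyset$ or $\tau_R\neq\emptyset$ the singleton form (with $f_R,f_L\in Q_1$) makes that letter have length $>1$, so Remark~\ref{rem:observations}\eqref{lengths} (plus noting that the source/target of $f$ is the endpoint of that letter lying in the higher of the two degrees) forces $d=-1$.

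The gap is in what you call the ``delicate case''. The genuinely problematic situation is $\sigma_R=\emptyset$ \emph{and} $\tau_R=\emptyset$ simultaneously, which your sketch does not cover: you speak of combining ``the remaining length-$\ge 2$ letter'' with endpoint considerations, but in this case there is no remaining long letter at all --- the source of $f$ is the right endpoint of $\sigma$ and its target is the right endpoint of $\tau$, and nothing so far prevents these endpoints from sitting arbitrarily deep in the antipath parts, i.e.\ $d<-1$. Moreover, the tools you name (Remark~\ref{rem:observations}\eqref{starts},\eqref{ends} and the singleton disjointness of $f$ from $\sigma_L,\tau_L$) are not the ones that close this case. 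The paper's argument is: since $\tau_R=\emptyset$ and $d<-1$, the endpoint letter $\tau_L$ must be inverse, for otherwise the cokernel at that endpoint gives nonzero cohomology of $\Sigma\Q_\tau$ in degree $d<-1$, contradicting that $\Q_\tau$ is a projective resolution; for degree reasons $\tau_L$ is then the terminal letter of the maximal inverse antipath $\inverse(w)$ of Corollary~\ref{cor:projective-resolution}; and finally the single-map endpoint condition (L2) forces a length-two relation between the first arrow $f_1$ of $f$ and the arrow of $\tau_L$, which would allow $\inverse(w)$ to be extended --- contradicting its maximality. It is this interplay between the single-map conditions and the maximality of the antipath in the construction of $\pi(w)$ (not the singleton disjointness condition) that rules out $d<-1$, so as written your proposal leaves the key case unproved.
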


\begin{proof}
Suppose $\f$ is supported in cohomological degree $d$. Since $\Q_\tau$ is a projective resolution, $\Sigma \Q_\tau$ attains its maximal degree in degree $-1$, thus $d \leq -1$.
By Remark~\ref{rem:observations}\eqref{lengths}, if in \eqref{singleton-single} either $\sigma_R \neq \emptyset$ or $\tau_R \neq \emptyset$ then $d = -1$. So assume $\sigma_R, \tau_R = \emptyset$ and $d < -1$. 
By Corollary~\ref{cor:projective-resolution}, since $\tau_L$ is the endpoint of a homotopy string occurring in degree $d$ it must be inverse (otherwise there would be nontrivial cohomology in degree $d$, contradicting the fact that $\Sigma \Q_\tau$ is a (shifted) projective resolution). Moreover, for degree reasons, $\tau_L$ must be the first homotopy letter of $\inverse(w)$. Writing $\tau_L = \bar{b}_l \cdots \bar{b}_1$ for some $b_i \in Q_1$, $i = 1, \ldots, l$, the definition of single maps gives us that $\bar{b}_1 \bar{f}_1 = 0$. This contradicts the fact that $\inverse(w)$ is the longest inverse antipath incident with $w'$ (see Corollary~\ref{cor:projective-resolution}). Therefore, $d = -1$, as claimed.
\end{proof}

\begin{corollary} \label{cor:tau-is-inverse}
Suppose $\f \colon \Q_\sigma \to \Sigma \Q_\tau$ is a singleton single map. In the unfolded diagram \eqref{singleton-single} in Section~\ref{sec:single}, $\tau_L$ must be a direct homotopy letter or $\tau_L = \emptyset$.
\end{corollary}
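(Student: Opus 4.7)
The corollary will follow from a short cohomological degree count built on Lemma~\ref{lem:degree-minus-one} together with the projective resolution structure of $\Q_\tau$. The plan has three steps, the last of which delivers the one-line contradiction.

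First, I would apply Lemma~\ref{lem:degree-minus-one} to locate the component $f$: its codomain projective sits in cohomological degree $-1$ of $\Sigma \Q_\tau$, equivalently in cohomological degree $0$ of the unshifted projective resolution $\Q_\tau = \Q_{\pi(w)}$. By Remark~\ref{rem:observations}\eqref{degrees}, degree $0$ is the \emph{maximal} cohomological degree attained by $\Q_\tau$, since $\Q_\tau$ is a projective resolution of the module $M(w)$.

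Second, I would invoke the standard convention governing how cohomological degree changes along the unfolded diagram of a string or band complex: traversing a direct homotopy letter from left to right raises the cohomological degree by one, while traversing an inverse homotopy letter in the same direction lowers it by one. This is a consequence of the differential of a string complex having degree $+1$ and of each homotopy letter inducing a differential component whose underlying path in $(Q,I)$ goes from the projective at its left endpoint to the projective at its right endpoint (for a direct letter) or in the opposite direction (for an inverse letter).

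Finally, I would assume for contradiction that $\tau_L$ is a nonempty inverse homotopy letter. In the unfolded diagram \eqref{singleton-single} the right endpoint of $\tau_L$ coincides with the target of $f$, which by Step~1 sits in degree $0$ of $\Q_\tau$. The convention of Step~2 then forces the left endpoint of $\tau_L$ to lie in degree $+1$ of $\Q_\tau$, contradicting the maximality from Step~1. Hence $\tau_L$ is either direct or equal to $\emptyset$. The only genuine subtlety is pinning down the direction-of-degree convention for direct versus inverse letters and correctly transporting it across the shift $\Sigma$; once this is settled, the degree count is immediate.
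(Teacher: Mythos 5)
Your argument is correct and is essentially the paper's own proof: the paper likewise combines Lemma~\ref{lem:degree-minus-one} with the fact that $\Sigma \Q_\tau$, being a shifted projective resolution, attains its maximal cohomological degree in degree $-1$, so an inverse $\tau_L$ would force a projective in a forbidden higher degree. Your more explicit unpacking of the degree convention along direct versus inverse homotopy letters is exactly the implicit content of the paper's one-line deduction.
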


\begin{proof}
Since $\Sigma \Q_\tau$ attains its maximal cohomological degree in degree $-1$ and $\f$ is supported in degree $-1$ by Lemma~\ref{lem:degree-minus-one}, $\tau_L$ cannot be inverse.
\end{proof}

Corollary~\ref{cor:tau-is-inverse} allows us to further specialise the setup in Section~\ref{sec:single}\eqref{singleton-single} in the statement of the next proposition.

\begin{proposition}\label{prop:singleton-single-to-extensions}
Suppose $\f \colon \Q_\sigma \to \Sigma \Q_\tau$ is a singleton single map with single component $f = f_n \cdots f_1$. Write $\tau_L = b_l \cdots b_1$ with $b_i \in Q_1$ for $i = 1,\ldots,l$. 
Whenever $\sigma_L$ is an inverse homotopy letter we shall write $\sigma_L = \bar{a}_k \cdots \bar{a}_1$, where $a_i \in Q_1$ for $i = 1, \cdots, k$ with $k \geq 1$.
\begin{enumerate}
\item If $\sigma_R = \emptyset$ then $\tau_R = \emptyset$ and $\sigma_L$ is inverse. 
The corresponding extension $\Phi(\f) \in \Ext^1_{\Lambda}(M(\bar{v}),M(w))$ is an arrow extension given by $a_1$, i.e. $\Phi(\f)$ gives rise to an extension of $M(w)$ by $M(\bar{v})$ with middle term  $ M(u)$ where $u = w a_1 \bar{v}$.
\end{enumerate}
Suppose $\sigma_R \neq \emptyset$. If $\sigma_R$ is not incident with $\direct(v)$ then $\sigma_L$ is inverse and we have:
\begin{enumerate}[resume]
\item If $\tau_R = \emptyset$ then the corresponding extension $\Phi(\f) \in \Ext^1_{\Lambda}(M(\bar{v}),M(w))$ is an overlap extension whose middle term is given by
\begin{align*}
& m = \bar{f}_1 \cdots \bar{f}_{n-1} , \
A = \emptyset , \
B = f_R , \
C = f_n  \text{ and }
D = b_1 \ & &
\text{ if } \sigma_R \text{ is incident with } \direct(v); \\
& m = \bar{f}_1 \cdots \bar{f}_n , \
A = a_1 , \
B = f_R , \
C = \emptyset \text{ and }
D = b_1 & &
\text{ otherwise.}
\end{align*}
\item If $\tau_R \neq \emptyset$ then the corresponding extension $\Phi(\f) \in \Ext^1_{\Lambda}(M(\bar{v}),M(w))$ is an overlap extension which, when $\sigma_R$ is incident with $\direct(v)$, has its middle term given by,
\begin{align*}
& m = \bar{f}_1 \cdots \bar{f}_{n-1} , \quad
A = \emptyset , \quad
B = f_R , \quad
C = f_n \quad \text{and} \quad
D = c_1 & & \text{ if } \tau_L = \emptyset ; \\
& m = \bar{f}_1 \cdots \bar{f}_{n-1} , \quad
A = \emptyset , \quad
B = f_R , \quad
C = f_n \quad \text{and} \quad
D = b_1 & & \text{ if } \tau_L \neq \emptyset ,
\end{align*}
and is an overlap extension which, when $\sigma_R$ is not incident with $\direct(v)$, has its middle term given by, 
\begin{align*}
& m = \bar{f}_1 \cdots \bar{f}_n , \quad
A = a_1 , \quad
B = f_R , \quad
C = \emptyset \quad \text{and} \quad
D = c_1 & & \text{ if } \tau_L = \emptyset ; \\
& m = \bar{f}_1 \cdots \bar{f}_n , \quad
A = a_1 , \quad
B = f_R , \quad
C = \emptyset \quad \text{and} \quad
D = b_1 & & \text{ if } \tau_L \neq \emptyset . 
\end{align*}
\end{enumerate}
\end{proposition}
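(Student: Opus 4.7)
The strategy is to proceed by case analysis following the statement, computing the mapping cone of each singleton single map $\f$ via the graphical calculus of~\cite[\S 4]{CPS}, and extracting the corresponding short exact sequence by applying Theorem~\ref{thm:cohomology} to the distinguished triangle $\Q_\tau \to C_\f^\bullet \to \Q_\sigma \to \Sigma \Q_\tau$, in the manner of the proof of Lemma~\ref{Graph map in antipaths}. The main structural inputs are already in place: by Lemma~\ref{lem:degree-minus-one} the component $f$ of $\f$ sits in cohomological degree $-1$, and by Corollary~\ref{cor:tau-is-inverse} the homotopy letter $\tau_L$ is direct or $\emptyset$.

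For Case~(1), I would first observe that if $\sigma_R = \emptyset$ then the source endpoint of $\sigma$ lies in degree~$-1$, and by Remark~\ref{rem:observations}(2) together with Corollary~\ref{cor:projective-resolution} this forces $\sigma_L$ to be inverse of the form $\bar a_k \cdots \bar a_1$. A degree argument using condition~(R2) of Section~\ref{sec:single} rules out $\tau_R \neq \emptyset$, since any such $\tau_R = \bar f \bar f_L$ would drive $f$ into degree $-2$. Applying the mapping cone calculus~\cite[\S 4]{CPS} then yields a cone whose underlying homotopy string is exactly $\pi(w a_1 \bar v)$, and taking $H^0$ via Theorem~\ref{thm:cohomology}, together with the canonicity-of-connecting-map check performed just as in Lemma~\ref{Graph map in antipaths}, produces the desired arrow extension determined by $a_1$.

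For the remaining cases with $\sigma_R \neq \emptyset$, I would treat the two dichotomies (``$\sigma_R$ incident with $\direct(v)$ or not'' and ``$\tau_R$ (respectively $\tau_L$) empty or not'') in parallel. When $\sigma_R$ is not incident with $\direct(v)$, condition~(R1) together with the structure of $\pi(v)$ in Corollary~\ref{cor:projective-resolution} forces $\sigma_L$ to be inverse. The graphical mapping cone calculus of~\cite[\S 4]{CPS} then produces either a single homotopy string or a disjoint union of two, and the overlap read off from the cohomology is obtained as follows: the path $f = f_n \cdots f_1$ contributes $m = \bar f_1 \cdots \bar f_n$ in the non-incident case (with $A = a_1$ supplied by $\sigma_L$), while in the incident case the last arrow $f_n$ has already been absorbed into $\direct(v)$, shortening the overlap to $\bar f_1 \cdots \bar f_{n-1}$ and supplying $C = f_n$ externally. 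The arrow $B = f_R$ emerges as the first arrow of $\sigma_R$ after the overlap, and $D$ equals $b_1$ when $\tau_L \neq \emptyset$ but is the first arrow $c_1$ of $\direct(w)$ when $\tau_L = \emptyset$, by appeal to Corollary~\ref{cor:projective-resolution}. Theorem~\ref{thm:cohomology} (mostly the maximal alternating substring rule Lemma~\ref{lem:max-alternating}) then yields the claimed middle term $M(u) \oplus M(u')$, and the connecting-map check identifies the extension as the prescribed overlap extension.

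The main obstacle is the bookkeeping across the combined matrix of sub-cases: in each we must verify (i) the form of the mapping cone via~\cite{CPS}, (ii) the identification of $H^0$ of the cone with the claimed string module, and (iii) that the maps induced on cohomology are the canonical substring/factor-string maps, so that the triangle really produces an overlap extension in the sense of Theorem~\ref{thm:strings}. The subtle step throughout is correctly distinguishing when a boundary arrow of the overlap lives internal to $f$ versus external to it (in $\direct(v)$ or $\inverse(w)$, or in $\tau_L$ versus $\direct(w)$), which is precisely what determines whether the overlap length is $n$ or $n-1$ and whether $A$, $C$, $D$ come from internal subletters of $f$ or from boundary antipath data; the condition of incidence with $\direct(v)$ is the precise combinatorial marker for this switch.
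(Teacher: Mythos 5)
Your overall route coincides with the paper's: in each case translate the singleton single data into substring decompositions of $v$ and $w$ via Corollary~\ref{cor:projective-resolution}, compute the mapping cone with the graphical calculus of \cite{CPS}, and take cohomology via Theorem~\ref{thm:cohomology} with a canonicity check as in Lemma~\ref{Graph map in antipaths}. However, your justification that $\tau_R = \emptyset$ in case (1) does not work. There is no degree obstruction to $\tau_R = \bar{f}\,\bar{f_L} \neq \emptyset$: such a $\tau_R$ is an inverse homotopy letter of $\pi(w)$ sitting between degrees $-1$ and $0$ of $\Q_\tau$, hence between $-2$ and $-1$ of $\Sigma \Q_\tau$, and the component $f$ still lands in degree $-1$, exactly as Lemma~\ref{lem:degree-minus-one} requires; condition (R2) is automatically satisfied because $\tau_R$ is inverse, so nothing is ``driven into degree $-2$''. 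The actual argument is combinatorial: since $\sigma_R = \emptyset$ and $f$ sits in degree $-1$, the vertex supporting $f$ is an endpoint of $\pi(v)$ and $\sigma_L$ must be the letter carrying $\inverse(v)$; if $\tau_R \neq \emptyset$ then gentleness forces the first arrow of $f_L$ to compose with $a_1$ through a relation, so the maximal inverse antipath $\inverse(v)$ would have to continue past $\bar{a}_1$, contradicting that the endpoint lies in degree $-1$. Some argument of this kind is needed to get $\tau_R = \emptyset$.

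A second gap: you assert that the cone ``is exactly $\pi(w a_1 \bar{v})$'' and, in cases (2)--(3), read off $m, A, B, C, D$ from the shape of $f$, but the substance of the proof is precisely the verification that the singleton data pins down the relevant substrings of the modules. In case (1) one must show that $\bar{f}$ is exactly --- neither a proper substring of, nor properly containing --- the maximal inverse prefix removed from $w$ in forming $\pi(w)$, so that $w$ begins with $\tau_L \bar{f}$ and $w a_1 \bar{v}$ is a string; in cases (2)--(3) one must derive from Corollary~\ref{cor:projective-resolution} and gentleness that $w$ contains $b_{l-1} \cdots b_1 \bar{f}_1 \cdots \bar{f}_n$ (resp.\ ends with $c_t \cdots c_1 \bar{f}_1 \cdots \bar{f}_n \bar{f_L}$) and that $\bar{v}$ starts with $\bar{f_R}\bar{f}_1 \cdots \bar{f}_{n-1}$ or $\bar{f}_R \bar{f}_1 \cdots \bar{f}_n \bar{\sigma_L}$ according to incidence with $\direct(v)$. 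Note also that when $\tau_L = \emptyset$ the arrow $D = c_1$ is the first arrow of the maximal direct suffix of $w$ removed in forming $\pi(w)$ --- an arrow of $w$ itself --- not ``the first arrow of $\direct(w)$'', which in that situation does not exist (there is no $d \in Q_1$ with $dw$ a string). Without these identifications the application of \cite{CPS} and Theorem~\ref{thm:cohomology} cannot be matched against the claimed arrow and overlap extensions, so as written the sketch presupposes the main content of the proof.
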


In case $(3)$ of the proof below, we do an example of a computation as in Lemma~\ref{Graph map in antipaths} for an overlap extension. An example computation for an arrow extension is done in the proof of Lemma~\ref{Graph map in antipaths}, and we refer the reader to Figure~\ref{fig:cohomology-map} for a schematic of such a computation.

\begin{proof}
$(1)$ First note that $\sigma_L$ is inverse, since if it were direct or empty $\Q_\sigma$ would have nontrivial cohomology in degree $-1$, contradicting the fact that it is a projective resolution. 
Therefore, $\sigma_L = \bar{a}_k \cdots \bar{a}_1$ with $a_i \in Q_1$ for $i = 1, \ldots, k$ for some $k \geq 1$. 
Moreover, $\sigma_L$ is the start of $\inverse(v)$, for otherwise $\sigma$ would start in degree $0$ after the removal of a maximal inverse prefix. It follows that $v$ starts with the inverse substring $\bar{a}_k \cdots \bar{a}_2$, whence $\bar{v}$ ends with the direct substring $a_2 \cdots a_k$.

Consider the local subquiver of $Q$, where, without loss of generality, we assume $f_L \in Q_1$,
\[
\xymatrix@!R=5px{                              
\xydot \ar[r]^-{a_k} & \xydot \ar@{.}[r] & \xydot \ar[r]^-{a_1}_{}="a"  & x \ar[d]^-{f_L}_{}="f"      & \ar[l]_-{f_n} \xydot & \ar@{.}[l] \xydot & \ar[l]_-{f_1} y \ar[r]^-{b_1} & \xydot \ar@{.}[r] & \xydot \ar[r]^-{b_l} & \xydot \, . \\
                              &                           &                                            & \xydot                      &                               &                           &                                          &                           &                               &
\ar @{--}@/_/ "a";"f"}
\]
If $\tau_R \neq \emptyset$ then $\bar{a}_1 \bar{f_L} = 0$, contradicting the fact that $\sigma_L$ is the start of $\inverse(v)$. Thus, $\tau_R = \emptyset$. 

Since $f$ is not a subletter of $\tau_L$ or vice versa we must have $f_1 \neq b_1$ and $b_1 \bar{f}_1$ is defined as a string. This means that a maximal inverse prefix, whose last (inverse) arrow is $\bar{f}_1$, has been removed from $w$ in the computation of $\tau = \pi(w)$ for otherwise $\tau_R \neq \emptyset$. We claim that $\bar{f}$ is precisely the maximal inverse prefix that has been removed. Clearly, the maximal inverse prefix cannot be a proper substring of $\bar{f}$ for the computation of $\tau = \pi(w)$ in Corollary~\ref{cor:projective-resolution} would require us to compose this with $w$ giving $\tau_R \neq \emptyset$. However, if $\bar{f}$ were a proper substring of the maximal inverse prefix then there would be an arrow $f_{n+1} \in Q_1$ such that $\bar{a}_1 \bar{f}_{n+1} = 0$ giving us a contradiction as above. Therefore, $w$ starts with the substring $\tau_L \bar{f}$.  Applying \cite[Thm. 3.2]{CPS}, Theorem~\ref{thm:cohomology} and a computation as in Lemma~\ref{Graph map in antipaths} shows that that $\Phi(\f)$ gives an arrow extension corresponding to the arrow $a_1$ with middle term $M(u)$, where $u = w a_1 \bar{v}$.

Suppose that $\sigma_R \neq \emptyset$. If $\sigma_R$ is not incident with $\direct(v)$ then by Remark~\ref{rem:observations}\eqref{starts}, $\sigma_L \neq \emptyset$ and is inverse and we write $\sigma_L = \bar{a}_k \cdots \bar{a}_1$, where $a_i \in Q_1$ for $i = 1,\ldots,k$ with $k \geq 1$.

$(2)$ Suppose that $\tau_R = \emptyset$. First observe that, by Corollary~\ref{cor:projective-resolution}, $w$ has a substring of the form $b_{l-1} \cdots b_1 \bar{f}_1 \cdots \bar{f}_n$. 
If $\sigma_R$ is incident with $\direct(v)$ (in which case so is $\sigma_L$ regardless of whether it is empty), then $v$ ends with a substring $f_{n-1} \cdots f_1 f_R$, i.e. $\bar{v}$ starts with a substring $\bar{f_R} \bar{f}_1 \cdots \bar{f}_{n-1}$, by Corollary~\ref{cor:projective-resolution}. Applying \cite[Thm. 3.2]{CPS}, taking cohomology using Theorem~\ref{thm:cohomology} and a calculation as in Lemma~\ref{Graph map in antipaths} then gives us an overlap extension between $M(w)$ and $M(\bar{v})$:
\[
m = \bar{f}_1 \cdots \bar{f}_{n-1} , \quad
A = \emptyset , \quad
B = f_R , \quad
C = f_n \quad \text{and} \quad
D = b_1 .
\]
Now suppose $\sigma_R$ is not incident with $\direct(v)$. By Corollary~\ref{cor:projective-resolution}  $v$ has a substring $\sigma_L f_n \cdots f_1 f_R$, i.e. $\bar{v}$ has a substring $\bar{f}_R \bar{f}_1 \cdots \bar{f}_n \bar{\sigma_L}$. Again applying \cite[Thm. 3.2]{CPS}, Theorem~\ref{thm:cohomology} and a calculation as in Lemma~\ref{Graph map in antipaths} gives us the following overlap extension between $M(w)$ and $M(\bar(v))$:
\[
m = \bar{f}_1 \cdots \bar{f}_n , \quad
A = a_1 , \quad
B = f_R , \quad
C = \emptyset \quad \text{and} \quad
D = b_1.
\]

$(3)$ Suppose that $\tau_R \neq \emptyset$. First we assume $\sigma_R$ is incident with $\direct(v)$, whence $\bar{v}$ starts with the substring $\bar{f_R} \bar{f}_1 \cdots \bar{f}_{n-1}$ as above.
If $\tau_L = \emptyset$, then, by Corollary~\ref{cor:projective-resolution}, $w$ ends with a substring $c_t \cdots c_1 \bar{f}_1 \cdots \bar{f}_n \bar{f_L}$, where $c_i \in Q_1$ for $i = 1, \cdots, t$ and $t \geq 0$. 
In this case the application of \cite[Thm. 3.2]{CPS}, Theorem~\ref{thm:cohomology} and a calculation as in Lemma~\ref{Graph map in antipaths}, which we sketch below, gives us the following overlap extension between $M(w)$ and $M(\bar{v})$:
\[
m = \bar{f}_1 \cdots \bar{f}_{n-1} , \quad
A = \emptyset , \quad
B = f_R , \quad
C = f_n \quad \text{and} \quad
D = c_1.
\]

We now sketch the calculation as in Lemma~\ref{Graph map in antipaths} for this case. The unfolded diagrams of the morphisms occurring in the mapping cone triangle of \cite[Thm. 3.2]{CPS} are:
\[
\xymatrix@!R=5px{
\Q_\tau \ar[d]_-{\g_1} \colon &                                              & \ar@{~}[r]^-{\bar{\gamma}}              & \xydot \ar[d]_-{f_L} \ar[r]^-{f_L f} & \xydot \ar@{=}[d]   & \\
\E_1 \ar[d]_-{\h_1} \colon     & \ar@{=}[d] \ar@{~}[r]^{\beta} & \xydot \ar@{=}[d] \ar[r]^-{\sigma_L} & \xydot \ar@{=}[d] \ar[r]^{f}          & \xydot \ar[d]^-{f_R} & \\
\Q_\sigma \colon                  & \ar@{~}[r]^-{\beta}                 & \xydot \ar[r]_-{\sigma_L}                  & \xydot \ar[r]_-{f f_R}                    & \xydot \ar@{~}[r]_-{\alpha}    &
}
\quad \text{and} \quad
\xymatrix@!R=5px{
\Q_\tau \ar[d]_-{\g_2} \colon &                                & \ar@{=}[d] \ar@{~}[r]^-{\bar{\gamma}} & \xydot \ar@{=}[d] \ar[r]^-{f_L f}         & \xydot \ar[d]^-{f_R}                                & \\
\E_2 \ar[d]_-{\h_2} \colon     &                                & \xydot \ar@{~}[r]^-{\bar{\gamma}}        & \xydot \ar[d]_-{f_L} \ar[r]^{f_L f f_R}  & \xydot \ar@{=}[d]  \ar@{~}[r]^-{\alpha} & \ar@{=}[d] \\
\Q_\sigma \colon                  & \ar@{~}[r]^-{\beta}  & \xydot \ar[r]_-{\sigma_L}                       & \xydot \ar[r]_-{f f_R}                           & \xydot \ar@{~}[r]_-{\alpha}                  &
}.
\]
Figure \ref{fig:case-3-cohomology} below shows the calculation of the induced maps in cohomology: it is clear that they are the canonical maps in the resulting overlap extension.
\begin{figure}
\scalebox{0.6}{
\begin{tikzpicture}
\coordinate[circle,inner sep=7,label=center:{$s(f)$}] (a1);

\coordinate[circle,below right=1cm of a1,label=center:{$\bullet$}] (a1');
\coordinate[circle,below right=1cm of a1',label=center:{$\bullet$}] (a1'');
\coordinate[circle,below right=1cm of a1'',label=center:{$\bullet$}] (a1''');

\coordinate[circle,below left=1cm of a1,label=center:{$\bullet$}] (a2);
\coordinate[circle,below left=1cm of a2,label=center:{$\bullet$}] (a3);
\coordinate[circle,below left=1cm of a3,label=center:{$\bullet$}] (a4);

\coordinate[circle,below left=1cm of a4,label=center:{$\bullet$}] (a5);
\coordinate[circle,below left=1cm of a5,label=center:{$\bullet$}] (a6);
\coordinate[circle,below left=1cm of a6,label=center:{$\quad$}] (a7);
\coordinate[circle, left=1cm of a6,label=center:{$\quad$}] (a8);

\draw[->] (a1) --node [anchor=south west,scale=.9]{$\overline{c}_1$} (a1');
\draw[dotted] (a1') -- (a1'');
\draw[->] (a1'') --node [anchor=south west,scale=.9]{$\overline{c}_t$} (a1''');

\draw[->] (a1) --node [anchor=south,scale=.9]{$f_1$} (a2);
\draw[dotted] (a2) -- (a3);
\draw[->] (a3) --node [anchor=south east,scale=.9]{$f_{n-1}$} (a4);
\draw[->] (a4) --node [anchor=south east,scale=.9]{$f_{n}$} (a5);
\draw[->] (a5) --node [anchor=south east,scale=.9]{$f_{L}$} (a6);

\draw[->,decorate,decoration={snake,amplitude=.4mm,segment length=2mm}] 
(a6)-- node [anchor=south west,scale=.9]{} (a7);
\draw[decorate,decoration={snake,amplitude=.4mm,segment length=2mm}] 
(a6)-- node [anchor=south,scale=.9]{$\overline{\gamma}$} (a8);

\draw [line width=10pt,opacity=0.20,black,line cap=round,rounded corners] (a1'''.center) -- (a1.center) -- (a2.center) -- (a3.center)--(a4.center)--(a5.center);

\draw [line width=10pt,opacity=0.10,black,line cap=round,rounded corners] (a5.center)--(a6.center)--(a8.center);

\coordinate[circle,inner sep=7,below =3cm of a1,label=center:{$s(f)$}] (b1);
\coordinate[circle,below right=1cm of b1,label=center:{$\bullet$}] (b1');
\coordinate[circle,below right=1cm of b1',label=center:{$\bullet$}] (b1'');
\coordinate[circle,below right=1cm of b1'',label=center:{$\bullet$}] (b1''');

\coordinate[circle,below left=1cm of b1,label=center:{$\bullet$}] (b2);
\coordinate[circle,below left=1cm of b2,label=center:{$\bullet$}] (b3);
\coordinate[circle,below left=1cm of b3,label=center:{$\bullet$}] (b4);

\coordinate[circle,below left=1cm of b4,label=center:{$\bullet$}] (b5);
\coordinate[circle,below left=1cm of b5,label=center:{$\bullet$}] (b6);
\coordinate[circle,below left=1cm of b6,label=center:{$\quad$}] (b7);
\coordinate[circle, left=1cm of b5,label=center:{$\quad$}] (b8);

\draw[->] (b1) --node [anchor=south west,scale=.9]{$\overline{c}_1$} (b1');
\draw[dotted] (b1') -- (b1'');
\draw[->] (b1'') --node [anchor=south west,scale=.9]{$\overline{c}_t$} (b1''');

\draw[->] (b1) --node [anchor=south,scale=.9]{$f_1$} (b2);
\draw[dotted] (b2) -- (b3);
\draw[->] (b3) --node [anchor=south east,scale=.9]{$f_{n-1}$} (b4);
\draw[->] (b4) --node [anchor=south east,scale=.9]{$f_{n}$} (b5);
\draw[->] (b5) --node [anchor=north west,scale=.9]{$f_{L}$} (b6);

\draw[->,decorate,decoration={snake,amplitude=.4mm,segment length=2mm}] 
(b6)-- node [anchor=south west,scale=.9]{} (b7);
\draw[->,decorate,decoration={snake,amplitude=.4mm,segment length=2mm}] 
(b5)-- node [anchor=south,scale=.9]{$\sigma_L$} (b8);

\coordinate[circle,above right=-.01cm of b5,label=center:{$\quad$}] (b5');
\draw[thick, dotted] (b5') arc (40:190:.3cm);

\draw [line width=10pt,opacity=0.20,black,line cap=round,rounded corners] (b1'''.center) -- (b1.center) -- (b2.center) -- (b3.center)--(b4.center);

\coordinate[circle,inner sep=7,below =5cm of b1,label=center:{$s(f)$}] (c1);
\coordinate[circle,inner sep=8,above right=1cm of c1,label=center:{$s(f_R)$}] (c1');
\coordinate[circle,right=1.4cm of c1',label=center:{$\quad$}] (c1'');

\coordinate[circle,below left=1cm of c1,label=center:{$\bullet$}] (c2);
\coordinate[circle,below left=1cm of c2,label=center:{$\bullet$}] (c3);
\coordinate[circle,below left=1cm of c3,label=center:{$\bullet$}] (c4);

\coordinate[circle,below left=1cm of c4,label=center:{$\bullet$}] (c5);
\coordinate[circle,below left=1cm of c5,label=center:{$\bullet$}] (c6);
\coordinate[circle,below left=1cm of c6,label=center:{$\quad$}] (c7);
\coordinate[circle, left=1cm of c5,label=center:{$\quad$}] (c8);

\draw[->] (c1') --(c1);
\draw[decorate,decoration={snake,amplitude=.4mm,segment length=2mm}] (c1') --node [anchor=south,scale=.9]{$\alpha$} (c1'');

\draw[->] (c1) --node [anchor=south,scale=.9]{$f_1$} (c2);
\draw[dotted] (c2) -- (c3);
\draw[->] (c3) --node [anchor=south east,scale=.9]{$f_{n-1}$} (c4);
\draw[->] (c4) --node [anchor=south east,scale=.9]{$f_{n}$} (c5);
\draw[->] (c5) --node [anchor=north west,scale=.9]{$f_{L}$} (c6);

\draw[->,decorate,decoration={snake,amplitude=.4mm,segment length=2mm}] 
(c6)-- node [anchor=south west,scale=.9]{} (c7);
\draw[->,decorate,decoration={snake,amplitude=.4mm,segment length=2mm}] 
(c5)-- node [anchor=south,scale=.9]{$\sigma_L$} (c8);

\coordinate[circle,above right=-.01cm of c5,label=center:{$\quad$}] (c5');
\draw[thick, dotted] (c5') arc (40:190:.3cm);

\draw [line width=10pt,opacity=0.20,black,line cap=round,rounded corners] (c1.center) -- (c3.center);

\draw [line width=10pt,opacity=0.10,black,line cap=round,rounded corners] (c1.center)--(c1'.center)--(c1''.center);


\draw[transform canvas={xshift=-1.5pt},color=red] (a1''') -- (b1''');
\draw[transform canvas={xshift=1.5pt},color=red] (a1''') -- (b1''');

\draw[transform canvas={xshift=-1.5pt},color=red] (a1) -- (b1);
\draw[transform canvas={xshift=1.5pt},color=red] (a1) -- (b1);

\draw[transform canvas={xshift=-1.5pt},color=red] (b1) -- (c1);
\draw[transform canvas={xshift=1.5pt},color=red] (b1) -- (c1);

\draw[transform canvas={xshift=-1.5pt},color=red] (a4) -- (b4);
\draw[transform canvas={xshift=1.5pt},color=red] (a4) -- (b4);

\draw[transform canvas={xshift=-1.5pt},color=red] (b4) -- (c4);
\draw[transform canvas={xshift=1.5pt},color=red] (b4) -- (c4);

\draw[transform canvas={xshift=-1.5pt},color=red] (a7) -- (b7);
\draw[transform canvas={xshift=1.5pt},color=red] (a7) -- (b7);

\draw[transform canvas={xshift=-1.5pt},color=red] (b7) -- (c7);
\draw[transform canvas={xshift=1.5pt},color=red] (b7) -- (c7);


\end{tikzpicture}
\qquad \qquad
\begin{tikzpicture}
\coordinate[circle,inner sep=4,label=center:{$\bullet$}] (a1);

\coordinate[circle,below right=1cm of a1,label=center:{$\bullet$}] (a1');
\coordinate[circle,below right=1cm of a1',label=center:{$\bullet$}] (a1'');
\coordinate[circle,below right=1cm of a1'',label=center:{$\bullet$}] (a1''');

\coordinate[circle,below left=1cm of a1,label=center:{$\bullet$}] (a2);
\coordinate[circle,below left=1cm of a2,label=center:{$\bullet$}] (a3);
\coordinate[circle,below left=1cm of a3,label=center:{$\bullet$}] (a4);

\coordinate[circle,below left=1cm of a4,label=center:{$\bullet$}] (a5);
\coordinate[circle,below left=1cm of a5,label=center:{$\bullet$}] (a6);
\coordinate[circle,below left=1cm of a6,label=center:{$\quad$}] (a7);
\coordinate[circle, left=1.4cm of a6,label=center:{$\quad$}] (a8);

\draw[->] (a1) --node [anchor=south west,scale=.9]{$\overline{c}_1$} (a1');
\draw[dotted] (a1') -- (a1'');
\draw[->] (a1'') --node [anchor=south west,scale=.9]{$\overline{c}_t$} (a1''');

\draw[->] (a1) --node [anchor=south,scale=.9]{$f_1$} (a2);
\draw[dotted] (a2) -- (a3);
\draw[->] (a3) --node [anchor=south east,scale=.9]{$f_{n-1}$} (a4);
\draw[->] (a4) --node [anchor=south east,scale=.9]{$f_{n}$} (a5);
\draw[->] (a5) --node [anchor=south east,scale=.9]{$f_{L}$} (a6);

\draw[->,decorate,decoration={snake,amplitude=.4mm,segment length=2mm}] 
(a6)-- node [anchor=south west,scale=.9]{} (a7);
\draw[decorate,decoration={snake,amplitude=.4mm,segment length=2mm}] 
(a6)-- node [anchor=south,scale=.9]{$\overline{\gamma}$} (a8);

\draw [line width=10pt,opacity=0.20,black,line cap=round,rounded corners] (a1'''.center) -- (a1.center) -- (a2.center) -- (a3.center)--(a4.center)--(a5.center)--(a6.center)--(a8.center);

\coordinate[circle,inner sep=4,below =5cm of a1,label=center:{$\bullet$}] (b1);
\coordinate[circle,above right=1cm of b1,label=center:{$\bullet$}] (b1');
\coordinate[circle,right=1.4cm of b1',label=center:{$\quad$}] (b1'');

\coordinate[circle,below left=1cm of b1,label=center:{$\bullet$}] (b2);
\coordinate[circle,below left=1cm of b2,label=center:{$\bullet$}] (b3);
\coordinate[circle,below left=1cm of b3,label=center:{$\bullet$}] (b4);

\coordinate[circle,below left=1cm of b4,label=center:{$\bullet$}] (b5);
\coordinate[circle,below left=1cm of b5,label=center:{$\bullet$}] (b6);
\coordinate[circle,below left=1cm of b6,label=center:{$\quad$}] (b7);
\coordinate[circle, left=1.4cm of b6,label=center:{$\quad$}] (b8);

\draw[->] (b1') --node [anchor=south,scale=.9]{$f_R$} (b1);
\draw[decorate,decoration={snake,amplitude=.4mm,segment length=2mm}] (b1') -- node [anchor=south,scale=.9]{$\alpha$}(b1'');

\draw[->] (b1) --node [anchor=south,scale=.9]{$f_1$} (b2);
\draw[dotted] (b2) -- node [anchor=south,scale=.9]{$\sigma_L$}(b3);
\draw[->] (b3) --node [anchor=south east,scale=.9]{$f_{n-1}$} (b4);
\draw[->] (b4) --node [anchor=south east,scale=.9]{$f_{n}$} (b5);
\draw[->] (b5) --node [anchor=south east,scale=.9]{$f_{L}$} (b6);

\draw[->,decorate,decoration={snake,amplitude=.4mm,segment length=2mm}] 
(b6)-- node [anchor=south west,scale=.9]{} (b7);
\draw[decorate,decoration={snake,amplitude=.4mm,segment length=2mm}] 
(b6)-- node [anchor=south east,scale=.9]{$\overline{\gamma}$} (b8);

\draw [line width=10pt,opacity=0.20,black,line cap=round,rounded corners] (b1'.center) -- (b6.center)--(b8.center);

\draw [line width=10pt,opacity=0.10,black,line cap=round,rounded corners] (b1'.center)--(b1''.center);

\coordinate[circle,inner sep=7,below =5cm of b1,label=center:{$\bullet$}] (c1);
\coordinate[circle,inner sep=4,above right=1cm of c1,label=center:{$\bullet$}] (c1');
\coordinate[circle,right=1.4cm of c1',label=center:{$\quad$}] (c1'');

\coordinate[circle,below left=1cm of c1,label=center:{$\bullet$}] (c2);
\coordinate[circle,below left=1cm of c2,label=center:{$\bullet$}] (c3);
\coordinate[circle,below left=1cm of c3,label=center:{$\bullet$}] (c4);

\coordinate[circle,below left=1cm of c4,label=center:{$\bullet$}] (c5);
\coordinate[circle,below left=1cm of c5,label=center:{$\bullet$}] (c6);
\coordinate[circle,below left=1cm of c6,label=center:{$\quad$}] (c7);
\coordinate[circle, left=1.4cm of c6,label=center:{$\quad$}] (c8);

\draw[->] (c1') --node [anchor=south,scale=.9]{$f_R$} (c1);
\draw[decorate,decoration={snake,amplitude=.4mm,segment length=2mm}] (c1') --node [anchor=south,scale=.9]{$\alpha$} (c1'');

\draw[->] (c1) --node [anchor=south,scale=.9]{$f_1$} (c2);
\draw[dotted] (c2) -- (c3);
\draw[->] (c3) --node [anchor=south east,scale=.9]{$f_{n-1}$} (c4);
\draw[->] (c4) --node [anchor=south east,scale=.9]{$f_{n}$} (c5);
\draw[->] (c5) --node [anchor=south east,scale=.9]{$f_{L}$} (c6);

\draw[->,decorate,decoration={snake,amplitude=.4mm,segment length=2mm}] 
(c6)-- node [anchor=south west,scale=.9]{} (c7);
\draw[->,decorate,decoration={snake,amplitude=.4mm,segment length=2mm}] 
(c6)-- node [anchor=south east,scale=.9]{$\sigma_L$} (c8);

\coordinate[circle,above right=-.01cm of c6,label=center:{$\quad$}] (c6');
\draw[thick, dotted] (c6') arc (40:190:.3cm);

\draw [line width=10pt,opacity=0.20,black,line cap=round,rounded corners] (c1'.center) -- (c5.center);

\draw [line width=10pt,opacity=0.10,black,line cap=round,rounded corners] (c1'.center)--(c1''.center);


\draw[transform canvas={xshift=-1.5pt},color=red] (a1) -- (b1);
\draw[transform canvas={xshift=1.5pt},color=red] (a1) -- (b1);

\draw[transform canvas={xshift=-1.5pt},color=red] (b1) -- (c1);
\draw[transform canvas={xshift=1.5pt},color=red] (b1) -- (c1);

\draw[transform canvas={xshift=-1.5pt},color=red] (b1') -- (c1');
\draw[transform canvas={xshift=1.5pt},color=red] (b1') -- (c1');

\draw[transform canvas={xshift=-1.5pt},color=red] (a4) -- (b4);
\draw[transform canvas={xshift=1.5pt},color=red] (a4) -- (b4);

\draw[transform canvas={xshift=-1.5pt},color=red] (b4) -- (c4);
\draw[transform canvas={xshift=1.5pt},color=red] (b4) -- (c4);

\draw[transform canvas={xshift=-1.5pt},color=red] (b5) -- (c5);
\draw[transform canvas={xshift=1.5pt},color=red] (b5) -- (c5);

\draw[transform canvas={xshift=-1.5pt},color=red] (a7) -- (b7);
\draw[transform canvas={xshift=1.5pt},color=red] (a7) -- (b7);

\draw[transform canvas={xshift=-1.5pt},color=red] (b7) -- (c7);
\draw[transform canvas={xshift=1.5pt},color=red] (b7) -- (c7);


\end{tikzpicture}}
\caption{The diagram of the left shows the calculation of $\g_1 \colon \Q_\tau \to \E_1$ and $\h_1 \colon \E_1 \to \Q_\sigma$, that on the right shows the calculation of $\g_2 \colon \Q_\tau \to \E_2$ and $\h_2 \colon \E_2 \to \Q_\sigma$ in the case (3) of the proof of Proposition~\ref{prop:singleton-single-to-extensions}.} \label{fig:case-3-cohomology} 
\end{figure}
If $\tau_L \neq \emptyset$, then $w$ has a substring $b_{l-1} \cdots b_1 \bar{f}_1 \cdots \bar{f}_n \bar{f_L}$. Applying \cite[Thm. 3.2]{CPS} and Theorem~\ref{thm:cohomology} gives us the following overlap extension between $M(w)$ and $M(\bar{v})$:
\[
m = \bar{f}_1 \cdots \bar{f}_{n-1} , \quad
A = \emptyset , \quad
B = f_R , \quad
C = f_n \quad \text{and} \quad
D = b_1.
\]
Now assume that $\sigma_R$ is not incident with $\direct(v)$, whence $\bar{v}$ has a substring $\bar{f}_R \bar{f}_1 \cdots \bar{f}_n \bar{\sigma_L}$ as above.
Using the calculations of substrings of $w$ for $\tau_L = \emptyset$ and $\tau_L \neq \emptyset$ above respectively, and the application of \cite[Thm. 3.2]{CPS}, Theorem~\ref{thm:cohomology} and a calculation as in Lemma~\ref{Graph map in antipaths} gives us the following overlap extensions between $M(w)$ and $M(\bar{v})$:
\begin{align*}
& m = \bar{f}_1 \cdots \bar{f}_n , \quad
A = a_1 , \quad
B = f_R , \quad
C = \emptyset \quad \text{and} \quad
D = c_1 & & \text{ if } \tau_L = \emptyset ; \\
& m = \bar{f}_1 \cdots \bar{f}_n , \quad
A = a_1 , \quad
B = f_R , \quad
C = \emptyset \quad \text{and} \quad
D = b_1 & & \text{ if } \tau_L \neq \emptyset . \qedhere
\end{align*}
\end{proof}

\section{Surjectivity of $\Phi$ onto overlap and arrow extensions}\label{sec:surjective}

In this section, we use the combinatorics of an overlap or arrow extension to show that the isomorphism $\Phi \colon \Hom_{\KminusL}(\Q_{\pi(v)}, \Sigma \Q_{\pi(w)}) \to \Ext^1_\Lambda(M(v),M(w))$ restricts to a surjection,
\[
\Phi \colon \bigg\{ \, \parbox{0.34\textwidth}{standard basis elements of $\Hom_{\KminusL}(\Q_{\pi(v)}, \Sigma \Q_{\pi(w)})$} \, \bigg\} \twoheadrightarrow \bigg\{ \, \parbox{0.30\textwidth}{overlap and arrow extensions $\eta \in \Ext^1_\Lambda (M(v),M(w))$} \, \bigg\}.
\]

\subsection{Overlap extensions} 

Throughout this section we shall have the following setup.

\begin{setup} \label{setup:overlap}
Let $v$ and $w$ be strings or bands and $\pi(v)$ and $\pi(w)$ be the corresponding homotopy strings or bands of their projective resolutions.

Suppose $0 \neq \eta \in \Ext^1_\Lambda (M(v),M(w))$ is an overlap extension corresponding to the decompositions $v = v_L \bar{B} m A v_R$ and $w = w_L D m \bar{C} w_R$. We consider $m$ and decompose it into  its homotopy letters $m = \mu_l \cdots \mu_1$ with $l \geq 0$. 
When $l = 0$, $m$ is a trivial string, i.e. $m = 1_x$ for some $x \in Q_0$ and we call it a \emph{trivial overlap}. If $l = 1$, we say $m$ is a \emph{direct} or \emph{inverse overlap}. If $l > 1$, we say that $m$ is a \emph{zigzag overlap}.
\end{setup}

\subsubsection{Zigzag overlaps}

We start with the zigzag overlap case.

\begin{lemma} \label{lem:ext-to-graph-map} 
Suppose in Setup~\ref{setup:overlap}, the string $m$ is a zigzag overlap.
Then the map $f \colon M(w) \to M(v)$ associated with this decomposition induces a graph map $\f \colon \Q_{\pi(w)} \to \Q_{\pi(v)}$ of homotopy string or band complexes such that $H^0 (\f) =f$. 
\end{lemma}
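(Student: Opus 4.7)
The plan is to exhibit $\f$ explicitly as a graph map by locating the maximal common homotopy substring of $\pi(w)$ and $\pi(v)$ coming from the overlap $m$ and then verifying the endpoint conditions of Section~\ref{sec:graph-maps}. First I would observe that in the decomposition $m = \mu_l \cdots \mu_1$ the breaks between consecutive homotopy letters cannot come from relations (since $m$ is a string), so they are forced direction changes; consequently the $\mu_i$ alternate in direction. For each $2 \le i \le l-1$ the letter $\mu_i$ is bordered inside $m$ by letters of opposite direction and therefore remains a full homotopy letter of both $\pi(w)$ and $\pi(v)$, contributing $l-2$ pairs of identified projective summands in matching cohomological degrees (alternating between $-1$ and $0$ by Remark~\ref{rem:observations}\eqref{degrees}).

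Next I would analyse the boundary letters $\mu_l$ and $\mu_1$. The letter $\mu_l$ is flanked in $v$ by $\bar B$ and in $w$ by $D$, which have opposite orientations, so by gentleness exactly one of the composites $\bar B \mu_l$, $D \mu_l$ extends $\mu_l$ into a strictly longer homotopy letter while the other produces a direction change; hence $\mu_l$ is a full homotopy letter in precisely one of $\pi(v), \pi(w)$ and a proper subletter of a larger homotopy letter in the other. The symmetric statement for $\mu_1$ uses the flanking letters $A$ and $\bar C$. The standing hypothesis that $v$ and $w$ do not both start at the start of $m$ and do not both end at the end of $m$ guarantees that the flanking homotopy letters on each side are genuinely distinct, and so the maximal common homotopy substring is $\rho = \mu_{l-1} \cdots \mu_2$ (trivial when $l=2$), flanked at each end by two unequal homotopy letters, one a subletter of the other. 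The endpoint commutativity conditions $(*)$ and $(**)$ in the unfolded diagram \eqref{finite-graph-map} then follow directly from this subletter relationship, and I would verify them case by case according to the orientations of $\mu_1, \mu_l$ and whether the strings $v_L, v_R, w_L, w_R$ are trivial; in the trivial cases the required extensions of the flanking letters are provided by the antipath parts $\direct(-), \inverse(-)$ of Corollary~\ref{cor:projective-resolution}.

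To conclude $H^0(\f) = f$, I would apply Corollary~\ref{cor:projective-resolution} to identify $H^0(\Q_{\pi(w)}) = M(w)$ and $H^0(\Q_{\pi(v)}) = M(v)$, and then argue as in the proof of Lemma~\ref{lem:max-alternating}: the identity maps on the projective summands attached to $\rho$ descend under the cohomology computation to the canonical identification of the basis vectors of $M(w)$ supported on the overlap $m$ with those of $M(v)$ supported on $m$, which is exactly the overlap map $f$. The main obstacle will be the exhaustive endpoint case analysis, as the orientations of $\mu_1, \mu_l$ combined with the possible triviality of $v_L, v_R, w_L, w_R$ produce several subcases (determining whether the flanking homotopy letters come from the module part or the antipath part of the projective resolutions), each of which requires a direct verification of the commutativity of $(*)$ and $(**)$.
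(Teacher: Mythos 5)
Your overall strategy is the same as the paper's: read off the local shapes of $\pi(w)$ and $\pi(v)$ near the two ends of the overlap from Corollary~\ref{cor:projective-resolution}, verify the graph-map endpoint conditions there, and then identify $H^0(\f)$ with $f$ by running the identity components through the cohomology computation of Lemma~\ref{lem:max-alternating}, as in Lemma~\ref{Graph map in antipaths}. The treatment of the interior letters $\mu_{l-1},\dots,\mu_2$ and the final cohomology step are fine.

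The gap is in your boundary analysis, which tacitly assumes that all four arrows $A,B,C,D$ are present. In Setup~\ref{setup:overlap} (and by non-splitness) only one of $A,\bar{C}$ and one of $\bar{B},D$ need exist, and in the degenerate cases both of your central claims fail. First, if $A=\emptyset$ (so $\bar{C}\neq\emptyset$) and $\mu_1$ is direct, then $\mu_1$ is a full homotopy letter of \emph{both} $\pi(w)$ and $\pi(v)$: in $\pi(v)$ the next homotopy letter is the single inverse arrow $\bar{C}$, the first letter of the inverse antipath appended by Corollary~\ref{cor:projective-resolution}. Hence the maximal common homotopy substring is strictly longer than $\mu_{l-1}\cdots\mu_2$ (it contains $\mu_1$ and may even continue into the antipath parts), your claimed dichotomy ``full letter in exactly one, proper subletter in the other'' is false, and the endpoint verification you describe would be carried out at an interior position where the two letters coincide rather than where the genuine endpoint conditions sit; your appeal to the standing hypothesis does not exclude this, since it only forbids $A$ and $\bar{C}$ (resp.\ $\bar{B}$ and $D$) vanishing simultaneously. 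Second, in the opposite direction, if $\mu_1$ is inverse and $\bar{C}w_R$ is inverse (or $\bar{C}=\emptyset$) with no arrow $a$ such that $w\bar{a}$ is a string, then Corollary~\ref{cor:projective-resolution} deletes the maximal inverse prefix of $w$, which contains $\mu_1$: the letter $\mu_1$ does not survive into $\pi(w)$ at all, and the common substring is flanked on that side by the empty letter, again outside your pattern. Dual phenomena occur at the $\mu_l$ end. These degenerations are exactly what the paper's proof enumerates when it lists the possible forms of $\pi(w)$ and $\pi(v)$ according to whether $A$, $\bar{C}$ (resp.\ $\bar{B}$, $D$) exist or parts of $\bar{C}w_R$ are removed, so your sketch needs that same enumeration before the endpoint conditions can be checked. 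A minor point: the fact that exactly one of $\bar{B}\mu_l$, $D\mu_l$ merges into a longer homotopy letter (when both $\bar{B}$ and $D$ exist) is purely an orientation statement, not a consequence of gentleness.
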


\begin{proof}
We first show that the given decomposition induces a graph map $\Q_{\pi(w)} \to \Q_{\pi(v)}$. It is sufficient only to consider the endpoints of the map, as determined by the decomposition. We consider only the right endpoints; the analysis for left endpoints is analogous.

Before breaking the argument up into a case analysis, first note that one of $A$ and $\bar{C}$ must exist (i.e. be nonempty) since $\eta$ is a non-split extension.
By gentleness, if both $A$ and $\bar{C}$ exist we must have $CA = 0$.

\smallskip 
\noindent \textbf{Case:} \textit{$\mu_1$ is a direct homotopy letter.}
\smallskip

By Corollary~\ref{cor:projective-resolution}, the homotopy string or band $\pi(w)$ has the following form:
\[
\pi(w) = 
\begin{cases}
\xymatrix{ \ar@{~}[r] & \xydot \ar[r]^-{\mu_1} & \xydot } 
& \text{if $\bar{C}w_R = \emptyset$ or is removed;} \\
\xymatrix{ \ar@{~}[r] & \xydot \ar[r]^-{\mu_1} & \xydot & \ar[l]_-{\overline{C p}} \xydot \ar@{~}[r] & }
& \text{for some path $p$ in $(Q,I)$ otherwise.}
\end{cases}
\]
Similarly, the homotopy string or band $\pi(v)$ has the following form:
\[
\pi(v) =
\begin{cases}
\xymatrix{ \ar@{~}[r] & \xydot \ar[r]^-{\mu_1} & \xydot & \ar[l]_-{\bar{C}} \xydot \ar@{~}[r] & }
& \text{if $A = \emptyset$;} \\
\xymatrix{ \ar@{~}[r] & \xydot \ar[r]^-{\mu_1 A q} & \xydot \ar@{~}[r] & }
& \text{for some path $q$ in $(Q,I)$ otherwise.}
\end{cases}
\]
Combining these, we get the following unfolded diagrams of graph map right endpoint conditions, showing the claim in this case.
\begin{align*}
\xymatrix@!R=5px{
\ar@{~}[r] & \xydot \ar@{=}[d] \ar[r]^-{\mu_1} & \xydot \ar@{=}[d] &                                                    & \\
\ar@{~}[r] & \xydot \ar[r]_-{\mu_1}                  & \xydot                   & \ar[l]^-{\bar{C}} \xydot \ar@{~}[r] &
} 
& \quad
\xymatrix@!R=5px{
\ar@{~}[r] & \xydot \ar[r]^-{\mu_1}  \ar@{=}[d]  & \xydot \ar[d]^-{A q}   & \\
\ar@{~}[r] & \xydot \ar[r]_-{\mu_1 A q}              & \xydot \ar@{~}[r]       &
} \\
\xymatrix@!R=5px{
\ar@{~}[r] & \xydot \ar[r]^-{\mu_1} \ar@{=}[d] & \xydot \ar@{=}[d] & \ar[l]_-{\overline{C p}} \xydot \ar@{~}[r] \ar[d]^-{p} & \\
\ar@{~}[r] & \xydot \ar[r]_-{\mu_1}                  & \xydot                   & \ar[l]^-{\bar{C}} \xydot \ar@{~}[r]                            &
}
& \quad
\xymatrix@!R=5px{
\ar@{~}[r] & \xydot \ar[r]^-{\mu_1} \ar@{=}[d]  & \xydot \ar[d]^-{A q}  & \ar[l]_-{\overline{C p}} \xydot \ar@{~}[r] & \\
\ar@{~}[r] & \xydot \ar[r]_-{\mu_1 A q}             & \xydot \ar@{~}[r]      &                                                               &
}
\end{align*}

\smallskip
\noindent \textbf{Case:} \textit{$\mu_1$ is an inverse homotopy letter.}
\smallskip

\[
\pi(w) =
\begin{cases}
\xymatrix{ \ar@{~}[r] & \xydot } 
&  \parbox[t]{0.61\textwidth}{if $\bar{C} = \emptyset$ or $\bar{C}w_R$ is inverse, and there is no $a \in Q_1$ with $w \bar{a}$ defined as a string;} \\
\xymatrix{ \ar@{~}[r] & \xydot & \ar[l]_-{\mu_1 \bar{a}} \xydot \ar@{~}[r] &}
& \text{if $\bar{C} = \emptyset$ and there exists $ a \in Q_1$ with $\mu_1 \bar{a}$ a string;} \\
\xymatrix{ \ar@{~}[r] & \xydot & \ar[l]_-{\mu_1 \bar{C} \bar{p}} \xydot \ar@{~}[r] & }
& \text{for some (possibly trivial) path $p$ in $(Q,I)$, otherwise,}
\end{cases}
\]
where the homotopy string in the first case starts with $\mu_2$ if it exists, or a single projective or the start of an antipath otherwise.
Similarly, the homotopy string or band $\pi(v)$ has the following form:
\[
\pi(v) = 
\begin{cases}
\xymatrix{ \ar@{~}[r] & \xydot & \ar[l]_-{\mu_1} \xydot \ar[r]^-{A q} & \xydot \ar@{~}[r] & }
& \text{for some path $q$ in $(Q,I)$ if $A \neq \emptyset$;} \\
\xymatrix{ \ar@{~}[r] & \xydot & \ar[l]_-{\mu_1 \bar{C}} \xydot \ar@{~}[r] & }
& \text{if $A = \emptyset$.}
\end{cases}
\]
We leave it to the reader to match up the various forms of the projective resolutions and confirm that they give rise to graph map right endpoint conditions as above.

Now examining the components of $\f \colon \Q_{\pi(w)} \to \Q_{\pi(v)}$ consisting of identity maps between indecomposable projective modules and following these maps through a calculation of the kind in Lemma~\ref{Graph map in antipaths} shows that the $H^0(\f) = f \colon M(w) \to M(v)$, i.e. $\f$ is indeed induced from $f$.
\end{proof}

Applying Remark~\ref{rem:quasi-graph} we get the following corollary.

\begin{corollary} \label{cor:zigzag}
Keep the setup as in Lemma~\ref{lem:ext-to-graph-map}. The map $f \colon M(w) \to M(v)$ induces a quasi-graph map $\phi \colon \Q_{\pi(v)} \wiggle \Q_{\pi(w)}$ of homotopy string or band complexes, and hence a homotopy family of maps $\Q_{\pi(v)} \to \Sigma \Q_{\pi(w)}$. 
\end{corollary}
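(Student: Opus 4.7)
The plan is to combine Lemma~\ref{lem:ext-to-graph-map} with Remark~\ref{rem:quasi-graph}. Lemma~\ref{lem:ext-to-graph-map} already constructs a graph map $\f \colon \Q_{\pi(w)} \to \Q_{\pi(v)}$ whose maximal common homotopy substring $\rho$ contains the homotopy letters $\mu_l, \ldots, \mu_1$ of the zigzag overlap $m$ in its interior. By Remark~\ref{rem:quasi-graph}, reading this unfolded diagram upside down defines a quasi-graph map $\phi \colon \Q_{\pi(v)} \wiggle \Q_{\pi(w)}$ provided that $\rho$ is not simultaneously the right end of both $\pi(v)$ and $\pi(w)$, and not simultaneously the left end of both. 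Once this hypothesis is verified, Theorem~\ref{thm:ALP} together with the discussion of Section~\ref{sec:basis} supplies the associated homotopy family of single and/or double maps $\Q_{\pi(v)} \to \Sigma \Q_{\pi(w)}$, which is the second assertion of the corollary.

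Verifying the endpoint hypothesis is the real content. I would appeal directly to the standing overlap-extension assumption that $v$ and $w$ do not both start at the start of $m$ nor both end at the end of $m$. For the right endpoint, this means at least one of $A v_R$ and $\bar{C} w_R$ is nonempty. Reading off the explicit forms of $\pi(v)$ and $\pi(w)$ near $\mu_1$ tabulated inside the proof of Lemma~\ref{lem:ext-to-graph-map} (the four displayed diagrams for $\mu_1$ direct, plus the analogous diagrams for $\mu_1$ inverse), together with Corollary~\ref{cor:projective-resolution}, one sees that any such nonempty extension produces a further homotopy letter of $\pi(v)$ or of $\pi(w)$ strictly past $\mu_1$; hence $\rho$ cannot exhaust both homotopy strings simultaneously on the right. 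The left-endpoint condition follows by the symmetric argument applied at $\mu_l$ using $v_L \bar{B}$ and $w_L D$.

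The main obstacle is purely bookkeeping: one must enumerate the right-endpoint cases (depending on which of $A, \bar{C}$ is empty and on the orientation of $\mu_1$) and match each against the corresponding unfolded diagrams of $\pi(v)$ and $\pi(w)$ given by Corollary~\ref{cor:projective-resolution}, then repeat at the left endpoint. Since no new ingredient beyond Lemma~\ref{lem:ext-to-graph-map}, Remark~\ref{rem:quasi-graph}, and the graphical descriptions of Section~\ref{sec:basis} is required, the proof reduces to a short invocation of these results once the tabulation is carried out.
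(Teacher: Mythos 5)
Your proposal matches the paper's proof, which is simply an application of Remark~\ref{rem:quasi-graph} to the graph map $\f \colon \Q_{\pi(w)} \to \Q_{\pi(v)}$ constructed in Lemma~\ref{lem:ext-to-graph-map}; your additional verification of the remark's endpoint hypothesis (via non-splitness, i.e.\ at least one of $A$, $\bar{C}$ and at least one of $B$, $D$ being nonempty, read against the forms of $\pi(v)$ and $\pi(w)$ from Corollary~\ref{cor:projective-resolution}) is exactly the detail the paper leaves implicit. The argument is correct and takes essentially the same route.
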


Let $\g \colon \Q_{\pi(v)} \to \Sigma \Q_{\pi(w)}$ be a representative of the homotopy family of single or double maps defined by the quasi-graph map $\phi \colon \Q_{\pi(v)} \wiggle \Q_{\pi(w)}$ obtained in Corollary~\ref{cor:zigzag} above. Then, by Proposition~\ref{prop:quasi-overlaps} one obtains $\Phi(\g) = \eta$.

\subsubsection{Direct or inverse overlaps} \label{sec:direct-overlap}

Here we consider the case of Setup~\ref{setup:overlap} in which $m$ is a direct overlap; the case that $m$ is an inverse overlap is analogous. 
As in previous sections $\sigma = \pi(v)$ and $\tau = \pi(w)$. 
Again, we use the combinatorics of the overlap to define a map $\g \colon \Q_{\pi(v)} \to \Sigma \Q_{\pi(w)}$ such that $\Phi(\g) = \eta$. In this case, $\g$ is either a singleton single map or a representative of a homotopy family of maps defined by a quasi-graph map $\phi \colon \Q_{\pi(v)} \wiggle \Q_{\pi(w)}$. 

In the following we do a detailed analysis of the  different types of standard basis maps which are induced by the different possible forms the strings $v$ and $w$ can take. We present the results by grouping the different cases giving rise to the same type of standard basis element in  $\Hom_{\KminusL}(\Q_{\pi(v)}, \Sigma \Q_{\pi(w)})$.

\smallskip
\noindent \textbf{Case:} \textit{$\g \colon \Q_{\pi(v)} \to \Sigma \Q_{\pi(w)}$ is a singleton single map.}
\smallskip

The unfolded diagram of the singleton single map is one of the diagrams below; we explain in which cases they arise.
In each case the precise description of $\tau_R$ is irrelevant, we note only that in each case it is necessarily empty or an inverse homotopy letter not containing $m$ as a substring, or vice versa.
\[
(I) \xymatrix@!R=5px{
\sigma \colon          \ar@{~}[r] & \xydot                          & \ar[l]_-{\bar{q}\bar{B}} \xydot \ar[r]^-{mAp} \ar[d]^-{m} & \xydot \ar@{~}[r]                        & \\
\Sigma \tau \colon  \ar@{~}[r] & \xydot \ar[r]_-{\tau_L}  & \xydot                                                                           & \ar[l]^-{\tau_R} \xydot \ar@{~}[r] &
}
\quad
(II) \xymatrix@!R=5px{
\sigma \colon          \ar@{~}[r] & \xydot                          & \ar@{-}[l] \xydot \ar[r]^-{DmAp} \ar[d]^-{Dm} & \xydot \ar@{~}[r]                        &  \\
\Sigma \tau \colon  \ar@{~}[r] & \xydot \ar[r]_-{\tau_L}  & \xydot                                                                           & \ar[l]^-{\tau_R} \xydot \ar@{~}[r] &
},
\]
where $p$ and $q$ are (possibly trivial) paths in $(Q,I)$.
Diagram (I) occurs precisely when both $A \neq \emptyset$ and $\bar{B} \neq \emptyset$: the pertinent part of the projective resolution of $M(v)$ has this form by Corollary~\ref{cor:projective-resolution}. Now, applying Corollary~\ref{cor:projective-resolution} to $w$ we see that,
\[
\tau_L =
\begin{cases}
dm             & \text{if $D = \emptyset$ but there exists $d \in Q_1$ with $dm$ defined as a string;} \\
q' Dm        & \parbox[t]{0.8\textwidth}{for some (possibly trivial) path $q'$ in $(Q,I)$ if $D \neq \emptyset$ and $w_L$ is not direct or $w_L D$ is direct and there exists $d \in Q_1$ with $dw$ defined as a string;} \\
\emptyset  & \text{otherwise.}
\end{cases}
\]
Diagram (II) occurs in the case that $A \neq \emptyset$ but $\bar{B} = \emptyset$; in this case to avoid $\eta$ being a split extension we must have $D \neq \emptyset$. In this case we have
\[
\tau_L =
\begin{cases}
\emptyset & \text{if $w_L D$ is direct and there exists no $ d \in Q_1$ with $d w$ defined as a string;} \\
q'Dm & \parbox[t]{0.8\textwidth}{for some nontrivial path $q'$ in $(Q,I)$ if the first letter of $w_L$ is not inverse and we are not in the case above.}
\end{cases}
\]
Note that in the case above when the first letter of $w_L$ is inverse, we do not get a singleton single map, hence this case is included in this argument but is treated in the next case below. In each case it is straightforward to verify that the diagram defines a singleton single map.
One now applies Proposition~\ref{prop:singleton-single-to-extensions} to see that $\Phi(\g) = \eta$.

\smallskip
\noindent \textbf{Case:} \textit{$\g \colon \Q_{\pi(v)} \to \Sigma \Q_{\pi(w)}$ is a representative of a homotopy family determined by a quasi-graph map $\phi \colon \Q_{\pi(v)} \wiggle \Q_{\pi(w)}$.}
\smallskip

We actually check that we get a graph map $\f \colon \Q_{\pi(w)} \to \Q_{\pi(v)}$ in the opposite direction and apply Remark~\ref{rem:quasi-graph}.

In the case that $A \neq \emptyset$ but $\bar{B} = \emptyset$, and the first letter of $w_L$ is inverse, i.e. the one case excluded in treating diagram (II) above, then we get the following graph map, in which $p$ is some (possibly trivial) path in $(Q,I)$.
\[
\xymatrix@!R=5px{
\pi(w) \colon \ar@{~}[r] & \xydot         & \ar[l] \xydot \ar[r]^-{Dm} \ar@{=}[d] & \xydot \ar[d]^-{Ap} & \ar[l] \xydot \ar@{~}[r] & \\
\pi(v) \colon \ar@{~}[r] & \xydot \ar[r] & \xydot \ar[r]_-{DmAp}                                    & \xydot \ar@{~}[r]                           &
}
\]

Now suppose $A = \emptyset$, whence $\bar{C} \neq \emptyset$. 
The overlap data gives rise to a graph map with the unfolded diagram,
\[
\xymatrix@!R=5px{
\pi(w) \colon \ar@{~}[r] & \xydot \ar@{-}[r] & \xydot \ar[r]^-{\tau_L} \ar[d]_-{f_L} & \xydot \ar@{=}[d] & \ar[l]_-{\tau_R} \xydot \ar[d]^-{f_R} \ar@{-}[r]^-{\tau_0} & \xydot \ar@{~}[r] & ,\\
\pi(v) \colon                  &  \ar@{~}[r] & \xydot \ar[r]_-{\sigma_L}                            &  \xydot                 & \ar[l]^-{\bar{C}} \xydot \ar@{~}[r]                                    &                           &
}
\]
in which,
\begin{align*}
\sigma_L & = \begin{cases}
m    & \text{if } \bar{B} \neq \emptyset; \\
Dm & \text{if } \bar{B} = \emptyset;
\end{cases} \\
\tau_R & = \begin{cases}
\bar{C}\bar{p} & \text{if } \bar{C}w_R \text{ is not removed when computing } \pi(w); \\
\emptyset        & \text{otherwise;}
\end{cases} \\
\tau_L & = \begin{cases}
\emptyset & \text{if } w_L D m \text{ is removed when computing } \pi(w); \\
qDm         & \text{otherwise,}
\end{cases}
\end{align*}
where $p$ is some (possibly trivial) path in $(Q,I)$ and $q$ is either the (possibly trivial) maximal direct prefix of $w_L$ if $w_L \neq \emptyset$ is not a direct string, and $q = d w_L$ for some $d \in Q_1$ with $d w_L$ defined as a string otherwise; in this second case, $w_L$ is also possibly empty.
The values of $(f_L, f_R)$ in each case are recorded in Table~\ref{table} 
\begin{table}
\begin{center}
\begin{tabular}{ c | c  c }
$f_L$                          & $\sigma_L = m$ & $\sigma_L = Dm$ \\
\hline
$\tau_L = \emptyset$ &  $\emptyset$  & $\emptyset$           \\
$\tau_L = qDm$         &  $qD$          &  $q$                         \\
\end{tabular}
\quad and \quad
\begin{tabular}{ c | c }
$f_R$                                & $\sigma_R = \bar{C}$ \\
\hline
$\tau_R = \bar{C}\bar{p}$ &  $p$                             \\
$\tau_R = \emptyset$        & $\emptyset$               \\
\end{tabular}
\end{center}
\caption{Left: the value of $f_L$ in each case; right: the value of $f_R$ in each case.}
\label{table}
\end{table}

\subsubsection{Trivial overlaps}

We finally turn our attention to trivial overlaps. Suppose $m = 1_x$ for some $x \in Q_0$. In this case, we fix the orientation of our strings and bands by requiring, whenever the relevant arrows exist, that $CB \neq 0$ and $DA \neq 0$. 
We again describe in each case how the combinatorics of the overlap can be used to construct a standard basis map $\g \colon \Q_{\pi(v)} \to \Q_{\pi(w)}$ such that $\Phi(\g) = \eta$.

\smallskip
\noindent \textbf{Case:} \textit{$\g \colon \Q_{\pi(v)} \to \Sigma \Q_{\pi(w)}$ is a graph map supported in one degree.}
\smallskip

This is simply a degeneration of diagram (I) in the singleton single map case of Section~\ref{sec:direct-overlap}, where instead $m = 1_x$ for some vertex $x \in Q_0$, i.e. providing a graph map concentrated in one degree.
Applying Lemma~\ref{lem:graph map single degree} we get $\Phi(\g) = \eta$.

\smallskip
\noindent \textbf{Case:} \textit{$\g \colon \Q_{\pi(v)} \to \Sigma \Q_{\pi(w)}$ is a singleton single map.}
\smallskip

If $A = \emptyset$ and $B \neq \emptyset$, in which case $\bar{C} \neq \emptyset$, then by Corollary~\ref{cor:projective-resolution}, the homotopy string $\pi(v)$ has the form
\[
\xymatrix{\pi(v) \colon \ar@{~}[r] & \xydot & \ar[l]_-{\bar{q} \bar{B} \bar{C}} \xydot & \ar[l]_-{\sigma_R} \xydot & \ar@{~}[l] } \, ,
\]
where $\sigma_R$ may be an empty homotopy letter. 
Similarly, the homotopy string $\pi(w)$ has the form 
\[
\xymatrix{\ar@{~}[r] & \xydot \ar[r]^-{\tau_L} & \xydot & \ar[l]_-{\bar{C} \bar{p}} \xydot \ar@{-}[r]^-{\tau_R} & \xydot \ar@{~}[r] & } 
\quad \text{or} \quad
\xymatrix{\ar@{~}[r] & \xydot \ar[r]^-{\tau_L} & \xydot } \, , 
\]
where $p$ is a (possibly trivial) path in $(Q,I)$, and $\tau_L$ and $\tau_R$ are possibly empty homotopy letters.
The form of $\tau_L$ depends on the form of the substring $w_L D$, but is not relevant for the description of the map.
The second case occurs when $w$ starts with $\bar{C}$ and we fall in case (3) or (4) of Corollary~\ref{cor:projective-resolution}.
In the case that $p$ is nontrivial, we get the unfolded diagram on the left below. In the case that $\pi(w)$ starts with $\tau_L$, we get the unfolded diagram on the right. In both cases we get a singleton single map.
\[  
\xymatrix@!R=4px{
\pi(v) \colon \ar@{~}[r]              & \xydot                         & \ar[l]_-{\bar{q} \bar{B} \bar{C}} \xydot \ar[d]^-{C} & \ar[l]_-{\sigma_R} \xydot                                        & \ar@{~}[l] & \\
\Sigma \pi(w) \colon \ar@{~}[r] & \xydot \ar[r]_-{\tau_L} & \xydot                                                                  & \ar[l]^-{\bar{C} \bar{p}} \xydot \ar@{-}[r]_-{\tau_R} & \xydot \ar@{~}[r] & 
}
\quad \text{or} \quad
\xymatrix@!R=4px{
\pi(v) \colon \ar@{~}[r]        & \xydot                 & \ar[l]_-{\bar{q} \bar{B} \bar{C}} \xydot \ar[d]^-{C} & \ar[l]_-{\sigma_R} \xydot  & \ar@{~}[l] & \\
\Sigma \pi(w) \colon \ar@{~}[r] & \xydot \ar[r]_-{\tau_L} & \xydot                                                                           &            & 
}
\]
The case that $p$ is trivial gives rise to a quasi-graph map, which is dealt with below.
There are obvious dual considerations when $A \neq \emptyset$ and $B = \emptyset$.
Now apply Proposition~\ref{prop:singleton-single-to-extensions}.

\smallskip
\noindent \textbf{Case:} \textit{$\g \colon \Q_{\pi(v)} \to \Sigma \Q_{\pi(w)}$ is a representative of a homotopy family determined by a quasi-graph map $\phi \colon \Q_{\pi(v)} \wiggle \Q_{\pi(w)}$.}
\smallskip

In the case that $A = \emptyset$ but $B \neq \emptyset$ above, in which the path $p$ occurring in the homotopy string $\pi(w)$ is trivial, we must have that $\tau_R \neq \emptyset$ and is direct by Corollary~\ref{cor:projective-resolution}. This gives rise to a graph map $\f \colon \Q_{\pi(w)} \to \Q_{\pi(v)}$ given by the following unfolded diagram.
\[
\xymatrix@!R=5px{
\pi(w) \colon \ar@{~}[r] & \xydot \ar[r]^-{\tau_L} & \xydot \ar[d]_-{Bq} & \ar[l]_-{\bar{C}} \xydot \ar@{=}[d] \ar@{-}[r]^-{\tau_R} & \xydot \ar@{~}[r]              & \\
\pi(v) \colon                 & \ar@{~}[r]                    & \xydot                    & \ar[l]^-{\bar{q} \bar{B} \bar{C}} \xydot                           & \ar[l]^-{\sigma_R} \xydot & \ar@{~}[l] 
}
\]
By Remark~\ref{rem:quasi-graph}, this gives rise to the quasi-graph map $\phi \colon \Q_{\pi(v)} \wiggle \Q_{\pi(w)}$, as claimed. Indeed, one can see that the map given in the unfolded diagram  above is one member of the homotopy family determined by $\phi$. Dual considerations apply for the case $A \neq \emptyset$ and $\B = \emptyset$.

Finally, the case $A = \emptyset$ and $B = \emptyset$ gives rise to a graph map $\f \colon \Q_{\pi(w)} \to \Q_{\pi(v)}$, whence a quasi-graph map $\phi \colon \Q_{\pi(v)} \wiggle \Q_{\pi(w)}$ by Remark~\ref{rem:quasi-graph}. Note that, necessarily, $C \neq \emptyset$ and $D \neq \emptyset$. In this case, by Corollary~\ref{cor:projective-resolution}, $\pi(v)$ has the form,
\[
\xymatrix{\pi(v) \colon \ar@{~}[r] & \xydot \ar[r]^-{\sigma_L} & \xydot \ar[r]^-{D} & x & \ar[l]_-{\bar{C}} \xydot & \ar[l]_-{\sigma_R} \xydot & \ar@{~}[l]} \, ,
\]
in which the homotopy letters $\sigma_L$ and $\sigma_R$ may be empty.
The homotopy string $\pi(w)$ has one of the following four forms
\begin{align*}
& \xymatrix{\ar@{~}[r] & \xydot \ar@{-}[r]^-{\tau_L} & \xydot \ar[r]^-{qD} & x & \ar[l]_-{\bar{C} \bar{p}} \xydot & \ar@{-}[l]_-{\tau_R} \xydot & \ar@{~}[l] } 
\quad &
\xymatrix{x} \hspace{3.7cm} \\
& \xymatrix{ \ar@{~}[r] & \xydot \ar@{-}[r]^-{\tau_L} & \xydot \ar[r]^-{qD} & x }
\quad &
\xymatrix{ x & \ar[l]_-{\bar{C} \bar{p}} \xydot & \ar[l]_-{\tau_R} \xydot & \ar@{~}[l] }
\end{align*}
where $p$ and $q$ are (possibly trivial) paths in $(Q,I)$. Whenever $p$ is trivial $\tau_R \neq \emptyset$ and is direct; whenever $q$ is trivial $\tau_L \neq \emptyset$ and is inverse. The graph map $\f \colon \Q_{\pi(w)} \to \Q_{\pi(v)}$ can be read off from the following unfolded diagram, interpreting $p$ and $q$ as trivial paths (whence isomorphisms) and deleting homotopy letters as appropriate to fit the cases.
\[
\xymatrix@!R=5px{
\pi(w) \colon \ar@{~}[r] & \xydot \ar@{-}[r]^-{\tau_L} & \xydot \ar[d]_-{q} \ar[r]^-{qD} & x \ar@{=}[d] & \ar[l]_-{\bar{C} \bar{p}} \xydot \ar[d]^-{p} & \ar@{-}[l]_-{\tau_R} \xydot & \ar@{~}[l] \\
\pi(v) \colon \ar@{~}[r]  & \xydot \ar[r]^-{\sigma_L}    & \xydot \ar[r]^-{D}                   & x                   & \ar[l]_-{\bar{C}} \xydot                             & \ar[l]_-{\sigma_R} \xydot  & \ar@{~}[l]
}
\]
As above, we apply Proposition~\ref{prop:quasi-overlaps} to get $\Phi(\g) = \eta$.

\subsection{Arrow Extensions}

Let $ v = v_m \cdots v_1$ and $w = w_n \cdots w_1$ where $v_i, w_i \in Q_1 \cup \bar{Q}_1$. 
Suppose that $\eta \in \Ext^1_\Lambda (M(v),M(w))$ is an arrow extension corresponding to an arrow $a \in Q_1$, i.e. $\eta$ corresponds to an extension with $M(u)$ as the middle term where $u = w a v$. 

Since we know $a v$ is defined as a string, then we are in case (1) or (3) in Corollary~\ref{cor:projective-resolution} so that $\pi(v) = \direct(a) \widetilde{v}$, where $\widetilde{v} = v' \inverse(b)$ for some $\bar{b} \in Q_1$ or $\widetilde{v} = v'$ depending on whether we fall into case (1) or (3), respectively. 
We set $\direct(a) = \cdots \theta_2 \theta_1 a$.
Likewise,
\[
\pi(w) =
\begin{cases}
\widetilde{w}\, \inverse(c) & \text{if there exists $c \in Q_1$ such that $w_1 \bar{c}$ is defined as a string;} \\
\widetilde{w}                   & \text{otherwise,}
\end{cases}
\]
where $\widetilde{w}$ is defined in a manner analogous to $\widetilde{v}$, depending on considerations at its end. We write $\inverse(c) = \bar{c} \bar{\phi}_1 \cdots \bar{\phi}_2 \cdots$.

The form of the map $\g \colon \Q_{\pi(v)} \to \Q_{\pi(w)}$ such that $\Phi(\g) = \eta$ depends on whether 
$v$ ends with an inverse or direct letter and
$w$ starts with an inverse or direct letter. We deal with the cases in turn.

\smallskip
\noindent \textbf{Case:} \textit{$w_1 \in Q_1$ and $v_m \in \bar{Q}_1$.}
\smallskip

If $\widetilde{w}\, \inverse(c)$ is defined, then we get the unfolded diagram of a (one-sided) graph map, $\g \colon \Q_{\pi(v)} \to \Sigma \Q_{\pi(w)}$, below, where we have used $\overline{\pi(w)}$ in the diagram.
\[ 
\xymatrix@!R=5px{
\Q_{\pi(v)} \colon \ar[d]_-{\g} & \ar@{~}[r] & \xydot \ar[r]^-{\theta_2} \ar@{=}[d] & \xydot \ar[r]^-{\theta_ 1} \ar@{=}[d] & \xydot \ar[r]^-{a} \ar@{=}[d]  & \xydot   & \ar[l]_-{v_m \cdots v_i}    \ar@{~}[r]   & \\
\Sigma \Q_{\pi(w)} \colon      & \ar@{~}[r] & \xydot \ar[r]_-{\phi_1}                 & \xydot \ar[r]_-{c = \theta_1}                   & \xydot                  & \xydot \ar[l]^-{\bar{w}_1  \cdots \bar{w}_j}                                  \ar@{~}[r]        & 
}
\]
Since  $ w_1 a $ is defined as a string, we have $w_1 a \notin I$,  whence $c = \theta_1$ by gentleness. Continuing, we see that $\phi_i = \theta_{i+1}$ for each $i > 1$.
Applying Lemma~\ref{Graph map in antipaths} one verifies that $\Phi(\g) = \eta$.

If $\widetilde{w}\, \inverse(c)$ is not defined, then we get the following unfolded diagram of a (one-sided) graph map $\g \colon \Q_{\pi(v)} \to \Sigma \Q_{\pi(w)}$ supported in one degree; applying Lemma~\ref{Graph map in antipaths} shows $\Phi(\g) = \eta$.
\[ 
\xymatrix@!R=5px{
\Q_{\pi(v)} \colon  \ar[d]_-{\g}    & \xydot \ar[r]^-{a} \ar@{=}[d]  & \xydot   & \ar[l]_-{v_m \cdots v_i}  \xydot       \ar@{~}[r]   & \\
\Sigma \Q_{\pi(w)} \colon  & \xydot                  & \xydot \ar[l]^-{\bar{w}_1  \cdots \bar{w}_j}                          \ar@{~}[r]        & 
}
\]
Note that since $w_1 a \notin I$ then $\theta_1 = \emptyset$ (i.e. $\direct(a) = a$) because otherwise $\theta_1$ would provide such a $c$ by gentleness of $\Lambda$.

\smallskip
\noindent \textbf{Case:} \textit{$w_1 \in Q_1$ and $v_m \in Q_1$.}
\smallskip

By the same argument as above, we have one of the following unfolded diagram of a (one-sided) graph map, $\g \colon \Q_{\pi(v)} \to \Sigma \Q_{\pi(w)}$, depending on whether $\widetilde{w}\, \inverse(c)$ is defined.
In both cases, one then applies Lemma~\ref{Graph map in antipaths}.
\[
\xymatrix@!R=5px{
\Q_{\pi(v)} \colon \ar[d]_-{\g} & \ar@{~}[r] & \xydot \ar[r]^-{\theta_2} \ar@{=}[d] & \xydot \ar[r]^-{\theta_ 1} \ar@{=}[d] & \xydot \ar[r]^-{av_m \cdots v_i} \ar@{=}[d]  & \xydot       \ar@{~}[r]   & \\
\Sigma \Q_{\pi(w)} \colon      & \ar@{~}[r] & \xydot \ar[r]_-{\phi_1}                 & \xydot \ar[r]_-{c = \theta_1}                   & \xydot                  & \xydot \ar[l]^-{\bar{w}_1  \cdots \bar{w}_j}                               \ar@{~}[r]        & 
}
\quad \text{or} \quad
\xymatrix@!R=5px{
\Q_{\pi(v)} \colon \ar[d]_-{\g} & \xydot \ar[r]^-{av_m \cdots v_i} \ar@{=}[d]  &  \xydot       \ar@{~}[r]                                       & \\
\Sigma \Q_{\pi(w)} \colon      & \xydot                                                         & \xydot \ar[l]^-{\bar{w}_1  \cdots \bar{w}_j}  \ar@{~}[r]        & 
}
\]

\smallskip
\noindent \textbf{Case:} \textit{$w_1 \in \bar{Q}_1$ and $v_m \in \bar{Q}_1$.}
\smallskip

Suppose $\widetilde{w}\, \inverse(c)$ is defined. Since  $ \theta_1 a  \in I$ we have that $\theta_1 \bar{w}_1$ is a string and $c = \theta_1$ is the unique arrow such that $c \bar{w}_1 \notin I$. Continuing we have $\phi_i = \theta_{i+1}$ for $i \geq 1$. This gives the following unfolded diagram of a (one-sided) graph map, $\g \colon \Q_{\pi(v)} \to \Sigma \Q_{\pi(w)}$; now apply Lemma~\ref{Graph map in antipaths} again.
\[ 
\xymatrix@!R=5px{
\Q_{\pi(v)} \colon \ar[d]_-{\g} & \ar@{~}[r] & \xydot \ar[r]^-{\theta_2} \ar@{=}[d] & \xydot \ar[r]^-{\theta_ 1 = c}  \ar@{=}[d]  & \xydot \ar[r]^-{a} \ar[d]^{\bar{w}_1  \cdots \bar{w}_j} & \xydot   & \ar[l]_-{v_m \cdots v_i} \xydot       \ar@{~}[r]   & \\
\Sigma \Q_{\pi(w)} \colon      & \ar@{~}[r] & \xydot \ar[r]_-{\phi_1}                 & \xydot \ar[r]_-{c \bar{w}_1  \cdots \bar{w}_j}                   & \xydot                  & \xydot \ar[l]^-{}                                \ar@{~}[r]        & 
}
\]

If  $\widetilde{w}\, \inverse(c)$ is not defined, then suppose $w_j \cdots w_1$ is the maximal inverse substring starting $w$, in particular, $\widetilde{w}$ starts with $w_{j+1}$ which is either direct or empty. Furthermore, $\theta_1 = \emptyset$ for otherwise $w_1 \bar{\theta}_1$ would be defined as a string and we could take $c = \theta_1$. Hence we get the following unfolded diagram of a singleton single map $\g \colon \Q_{\pi(v)} \to \Sigma \Q_{\pi(w)}$ and we apply Proposition~\ref{prop:singleton-single-to-extensions}.
\[ 
\xymatrix@!R=5px{
\Q_{\pi(v)} \colon \ar[d]_-{\g} &   \xydot \ar[r]^-{a} \ar[d]^{\bar{w}_1  \cdots \bar{w}_j} & \xydot   & \ar[l]_-{v_m \cdots v_i} \xydot       \ar@{~}[r]   & \\
\Sigma \Q_{\pi(w)} \colon      &   \xydot                                                       & \xydot \ar[l]^-{\bar{w}_{j+1} \cdots \bar{w}_k}   \ar@{~}[r]        & 
}
\]

\smallskip
\noindent \textbf{Case:} \textit{$w_1 \in \bar{Q}_1$ and $v_m \in Q_1$.}
\smallskip 

Arguing as above, we get the following unfolded diagram of a (one-sided) graph map or a singleton single map, $\g \colon \Q_{\pi(v)} \to \Sigma \Q_{\pi(w)}$, when $\widetilde{w} \, \inverse(c)$ is defined and when it is not, respectively.
One then applies Lemma~\ref{Graph map in antipaths} or Proposition~\ref{prop:singleton-single-to-extensions}, respectively.
\begin{align*} 
& \xymatrix@!R=5px{
\Q_{\pi(v)} \colon \ar[d]_-{\g} & \ar@{~}[r] & \xydot \ar[r]^-{\theta_2} \ar@{=}[d] & \xydot \ar[r]^-{\theta_ 1 }  \ar@{=}[d]  & \xydot \ar[r]^-{av_m \cdots v_i} \ar[d]^{\bar{w}_1  \cdots \bar{w}_j} & \xydot   & \ar[l]_-{} \xydot     \ar@{~}[r]   & \\
\Sigma \Q_{\pi(w)} \colon      & \ar@{~}[r] & \xydot \ar[r]_-{\phi_1}                 & \xydot \ar[r]_-{c \bar{w}_1  \cdots \bar{w}_j}                   & \xydot                  & \xydot \ar[l]^-{}                              \ar@{~}[r]        & 
}
\quad \text{or} \\
& \xymatrix@!R=5px{
\Q_{\pi(v)} \colon \ar[d]_-{\g} & \xydot \ar[r]^-{av_m \cdots v_i} \ar[d]^{\bar{w}_1  \cdots \bar{w}_j} & \xydot   & \ar[l]_-{} \xydot     \ar@{~}[r]   & \\
\Sigma \Q_{\pi(w)} \colon      & \xydot                  & \xydot \ar[l]^-{\bar{w}_{j+1} \cdots \bar{w}_k}                             \ar@{~}[r]        & 
}
\end{align*}


\end{document}